\numberwithin{equation}{section}
\newtheorem{theorem}{Theorem}[section]
\newtheorem{claim}[theorem]{Claim}
\newtheorem{corollary}[theorem]{Corollary}
\newtheorem{definition}[theorem]{Definition}
\newtheorem{lemma}[theorem]{Lemma}
\newtheorem{proposition}[theorem]{Proposition}
\newtheorem{remark}[theorem]{Remark}
\newenvironment{proof}[1][Proof]{\noindent\textbf{#1.} }{\ \rule{0.5em}{0.5em}}
\begin{document}

\title{$BMO$ estimates for nonvariational operators with discontinuous coefficients
structured on H\"{o}rmander's vector fields on Carnot
groups\thanks{Mathematics subject classification (2000): Primary 35B45;
Secondary 35H10, 42B20, 43A80. Key words and phrases: H\"{o}rmander's vector
fields, Carnot groups, BMO, nonvariational operators.}}
\author{Marco Bramanti, Maria Stella Fanciullo}
\maketitle

\begin{abstract}
We consider the class of operators%
\[
Lu=\sum_{i,j=1}^{q}a_{ij}(x)X_{i}X_{j}u
\]
where $X_{1},X_{2},...,X_{q}$ are homogeneous left invariant H\"{o}rmander's
vector fields on $\mathbb{R}^{N}$ with respect to a structure of Carnot group,
$q\leq N,$ the matrix $\left\{  a_{ij}\right\}  $ is symmetric and uniformly
positive on $\mathbb{R}^{q},$ the coefficients $a_{ij}\ $\ belong to
$L^{\infty}\cap VLMO_{loc}\left(  \Omega\right)  $ ("vanishing logarithmic
mean oscillation") with respect to the distance induced by the vector fields
(in particular they can be discontinuous), $\Omega$ is a bounded domain of
$\mathbb{R}^{N}$. We prove local estimates in $BMO_{loc}\cap L^{p}$ of the
kind:%
\begin{align*}
&  \left\Vert X_{i}X_{j}u\right\Vert _{BMO_{loc}^{p}\left(  \Omega^{\prime
}\right)  }+\left\Vert X_{i}u\right\Vert _{BMO_{loc}^{p}\left(  \Omega
^{\prime}\right)  }\leq\\
&  \le c\left\{  \left\Vert Lu\right\Vert _{BMO_{loc}^{p}\left(
\Omega\right)  }+\left\Vert u\right\Vert _{BMO_{loc}^{p}\left(  \Omega\right)
}\right\}
\end{align*}
for any $\Omega^{\prime}\Subset\Omega$, $1<p<\infty$.

Even in the uniformly elliptic case $X_{i}=\partial_{x_{i}}$, $q=N$ our
estimates improve the known results.

\end{abstract}
\tableofcontents

\section{Introduction}

\textbf{Context and main result of the paper}

Let $X_{0},X_{1},...,X_{q}$ be a system of smooth vector fields,%
\[
X_{i}=\sum_{j=1}^{N}b_{ij}\left(  u\right)  \partial_{u_{j}}%
\]
defined in the whole $\mathbb{R}^{N}$, and assume they satisfy H\"{o}rmander's
rank condition in $\mathbb{R}^{N}$: the vector fields $X_{i},$ and their
commutators $\left[  X_{i},X_{j}\right]  ,\left[  X_{k},\left[  X_{i}%
,X_{j}\right]  ,\right]  ,...$ up to some fixed length span $\mathbb{R}^{N}$
at any point. Then, a famous theorem proved by H\"{o}rmander in 1967 (see
\cite{h}), states that the second order differential operator%
\begin{equation}
L=\sum_{i=1}^{q}X_{i}^{2}+X_{0}\label{Horm op}%
\end{equation}
is hypoelliptic, that is $u\in C^{\infty}\left(  \Omega\right)  $ whenever $u$
is a distributional solution to $Lu=f$ in an open set $\Omega\subset
\mathbb{R}^{N}$ with $f\in C^{\infty}\left(  \Omega\right)  $. In 1975 Folland
\cite{fo} studied the class of H\"{o}rmander's operators (\ref{Horm op}) which
admits an underlying structure of \emph{homogeneous group}. This means that
the vector fields $X_{i}$ are left invariant with respect to a Lie group
operation in $\mathbb{R}^{N}$ (which we think as \textquotedblleft
translations\textquotedblright) and the operator $L$ is homogeneous of degree
$2$ with respect to a one-parameter family of Lie group automorphisms (which
we think as \textquotedblleft dilations\textquotedblright). Then, Folland
proved that there exists a global fundamental solution $\Gamma$ for $L,$ which
is translation invariant and homogeneous of degree $2-Q$ with respect to the
dilations, where $Q$ is the so called \emph{homogeneous dimension }of the
group. This fact allows to apply the theory of singular integrals in
homogeneous groups, and derive from representation formulas suitable a priori
estimates for the second order derivatives $X_{i}X_{j}u$ ($i,j=1,2,...,q$) or
$X_{0}u$ (note that the \textquotedblleft drift\textquotedblright\ vector
field $X_{0}$ has weight two in the operator $L$). Later, in \cite{rs},
Rothschild and Stein showed how the analysis of a general operator
(\ref{Horm op}), also in absence of an underlying structure of homogeneous
group, can be performed by a suitable technique of \textquotedblleft lifting
and approximation\textquotedblright\ which reduces the study of $L$ to that of
an operator of the kind studied by Folland.

In the last decade, more general families of second order differential
operators modeled on H\"{o}rmander's vector fields have been studied, namely
operators of the kinds%
\begin{align}
\mathcal{L}  &  =\sum_{i,j=1}^{q}a_{ij}\left(  x\right)  X_{i}X_{j}%
\label{nonvar 1}\\
\mathcal{L}  &  =\sum_{i,j=1}^{q}a_{ij}\left(  x\right)  X_{i}X_{j}%
-\partial_{t}\label{nonvar 2}\\
\mathcal{L}  &  =\sum_{i,j=1}^{q}a_{ij}\left(  x\right)  X_{i}X_{j}+X_{0}
\label{nonvar 3}%
\end{align}
where the matrix $\left\{  a_{ij}\left(  x\right)  \right\}  _{i,j=1}^{q}$ is
symmetric positive definite, the coefficients are bounded and satisfy suitable
mild regularity assumptions, for instance they belong to H\"{o}lder or $VMO$
spaces defined with respect to the distance induced by the vector fields.
Since the $a_{ij}$'s are not $C^{\infty},$ these operators are no longer
hypoelliptic. Nevertheless, a priori estimates on second order derivatives
with respect to the vector fields are a natural result which does not in
principle require smoothness of the coefficients. Namely, a priori estimates
in $L^{p}$ (with coefficients $a_{ij}$ in $VMO\cap L^{\infty}$) have been
proved in \cite{bb2} for operators (\ref{nonvar 1}), in \cite{bb1} for
operators (\ref{nonvar 3}) but in homogeneous groups, and in \cite{BZ2} for
operators (\ref{nonvar 3}) in the general case; a priori estimates in
$C^{\alpha}$ spaces (with coefficients $a_{ij}$ in $C^{\alpha}$) have been
proved in \cite{bb4} for operators (\ref{nonvar 2}), in \cite{GL} for
operators (\ref{nonvar 3}) but in homogeneous groups, and in \cite{BZ2} for
operators (\ref{nonvar 3}) in the general case. See also the recent monograph
\cite{BBLU} for more results on these classes of operators and for a larger
bibliographic account.

A somewhat endpoint case of $L^{p}$ estimates consists in $BMO$ type
estimates, which is the issue that we address in this paper. We will prove,
for operators (\ref{nonvar 1}) in homogeneous groups, with coefficients
$a_{ij}=a_{ji}$ satisfying the condition%
\[
\Lambda|\xi|^{2}\leq\sum_{i,j=1}^{q}a_{ij}\left(  x\right)  \xi_{i}\xi_{j}%
\leq\Lambda^{-1}|\xi|^{2}\text{ }\forall\xi\in{\mathbb{R}}^{q},\text{a.e.
}x\in\Omega
\]
and having \textquotedblleft vanishing logarithmic mean
oscillation\textquotedblright\ ($VLMO$) in a bounded domain $\Omega$, a priori
estimates of the kind%
\begin{align*}
&  \sum_{i,j=1}^{q}\left\Vert X_{i}X_{j}u\right\Vert _{BMO_{loc}^{p}\left(
\Omega^{\prime}\right)  }+\sum_{i=1}^{q}\left\Vert X_{j}u\right\Vert
_{BMO_{loc}^{p}\left(  \Omega^{\prime}\right)  }\\
&  \leq c\left\{  \left\Vert Lu\right\Vert _{BMO_{loc}^{p}\left(
\Omega\right)  }+\left\Vert u\right\Vert _{BMO_{loc}^{p}\left(  \Omega\right)
}\right\}
\end{align*}
for any $p\in\left(  1,\infty\right)  ,\Omega^{\prime}\Subset\Omega$ (see
Theorem \ref{Thm main} for the precise statement; also, the precise meaning of
these norms will be defined later). Let us stress that the $VLMO$ assumption
allows some kind of discontinuity of the coefficients $a_{ij}$.

\bigskip

\textbf{Comparison with the existing literature}

Remarkably, this estimate appears to be new even in the nonvariational
uniformly elliptic case ($q=n$, $X_{i}=\partial_{x_{i}}$ for $i=1,2,...,n$).
Actually, a few papers are devoted to $BMO$ estimates for the second
derivatives of the solutions to nonvariational uniformly elliptic equations:
we can quote the old paper by Peetre \cite{pee}, establishing local $BMO$
estimates for elliptic equations with uniformly continuous coefficients (with
a continuity modulus $o\left(  1/\left\vert \log t\right\vert \right)  $), and
the more recent ones by Chang-Dafni-Stein \cite{cds}, containing global $BMO$
estimates for the laplacian, and by Chang-Li \cite{cl}, dealing with elliptic
operators with Dini-continuous coefficients. Elliptic equations in divergence
form with $VLMO$ coefficients have been studied by Acquistapace \cite{A},
Huang \cite{H} (in which also some nondivergence form equation has studied),
while $BMO$ estimates for some nonlinear equations have been established by
Caffarelli-Huang \cite{CH}.

For operators modeled on H\"{o}rmander vector fields and written in divergence
form, $BMO$ estimates have been proved by Di Fazio-Fanciullo in \cite{DFF},
while Bramanti-Brandolini in \cite{BB3} have proved $BMO$ type estimates in a
scale of spaces $BMO_{\phi}$ for operators (\ref{nonvar 1}) built on general
H\"{o}rmander's vector fields, assuming a certain modulus of continuity of the
coefficients $a_{ij}$.

\bigskip

\textbf{Problems and strategy}

Although the continuity requirement on the coefficients asked in \cite{BB3} is
not a strong one, it represents a significant difference with the $L^{p}$
theory developed in \cite{bb1}, \cite{bb2} under the $VMO$ assumption and with
the $BMO$ theory developed in the present paper under the $VLMO$ assumption.

The reason of this difference has its roots in the real variable machinery
which is applied to prove these estimates, namely suitable extensions of the
famous $L^{p}$ estimate for the commutator of a Calder\'{o}n-Zygmund operator
with the multiplication by a $BMO$ function, proved by Coifman-Rochberg-Weiss
in \cite{crw}, which was first applied to the proof of $L^{p}$ a priori
estimates for uniformly elliptic operators with $VMO$ coefficients by
Chiarenza-Frasca-Longo in \cite{cfl1}, \cite{cfl2}. To put the real analysis
problem into its context, let us recall some facts. It is known that, under
fairly broad assumptions, a singular integral operator maps $L^{\infty}$ into
$BMO$. Under much more stringent assumptions it can be proved that it maps
$BMO$ into $BMO$. This was shown in \cite{pee} for convolution type operators
in $\mathbb{R}^{N}$, and in \cite{BB3}, in spaces of homogeneous type of
finite measure, for singular integrals satisfying a strong cancellation
property. The multiplication operator for a function $a$ maps $BMO$ into $BMO$
provided $a\in L^{\infty}\cap LMO$ (where $LMO$ stands for \textquotedblleft
logarithmic bounded mean oscillation\textquotedblright), as proved in
\cite{steg}. We also need a result stating that the commutator of a singular
integral operators with the multiplication for $a$ maps $BMO$ into $BMO$, with
operator norm bounded by the $LMO$ seminorm of $a$. Actually, we want the
operator norm of the commutator to be small whenever $a$ has small
\emph{oscillation}, but not small absolute size. A result of this kind has
been proved by Sun-Su in \cite{SS} for singular integral operators of
convolution type in $\mathbb{R}^{N}$, satisfying a strong cancellation
property. In this case, the commutator is proved to map $BMO\cap L^{p}$ into
itself ($1<p<\infty$) continuously, with operator norm bounded by the $LMO$
seminorm of $a$:%
\[
\left[  \left[  T,a\right]  f\right]  _{BMO}+\left\Vert \left[  T,a\right]
f\right\Vert _{L^{p}}\leq c\left[  a\right]  _{LMO}\left(  \left[  f\right]
_{BMO}+\left\Vert f\right\Vert _{L^{p}}\right)  .
\]
This result is clever under several regards. First, it exploits the idea of
bounding the $BMO\cap L^{p}$ norm of the commutator with the analogous norm of
$f$, and not separately the $BMO$ seminorm of the commutator with the $BMO$
seminorm of $f$; secondly, it relies on the very strong cancellation
properties enjoyed by classical Calder\'{o}n-Zygmund convolution-type kernels.
The present paper starts from the idea of extending this commutator theorem to
the context of convolution-type singular integrals on homogeneous groups,
which should be useful to handle operators (\ref{nonvar 1}) in this context,
in view of the results and techniques of \cite{fo}, \cite{bb1}, \cite{BB3}.
However, in contrast with the global, convolution nature of our singular
integrals, we are interested in the study of an operator (\ref{nonvar 1}) on a
bounded domain $\Omega$; this means that we don't want to assume the
coefficients $a_{ij}$ defined on the whole $\mathbb{R}^{N}$, nor rely on an
extension result for $LMO$ in this abstract context. Therefore, the commutator
estimate that we prove has to be established directly in a \emph{local }form.
This forces us to go through the whole argument in \cite{SS} and reshape it on
a new kind of local $BMO_{loc}\left(  \Omega_{1},\Omega_{2}\right)  $ spaces,
defined averaging the function over the balls centered at points of some open
set $\Omega_{1}$ and contained in a larger open set $\Omega_{2}\subseteq
\Omega$. This fact also serves another scope, namely avoiding to handle the
sets $B\cap\Omega$, which would require some extra assumption on $\Omega$ in
order to use the doubling condition. Moreover, it would seem to us rather
unnatural to express the assumption on the coefficients $a_{ij}$ in a form
which involves their boundary behavior, since after all we are just proving
interior estimates. Under this respect, the present paper moves in the spirit
of the recent research about \textquotedblleft local real analysis in locally
homogeneous spaces\textquotedblright\ carried out in \cite{BZ}.

Once the real analysis part of this research is set into its proper frame, one
can try to follow as close as possible the general line first drawn in
\cite{bb1}. In doing so, another major problem arises, namely the necessity of
getting some new uniform upper bound related to the fundamental solution
$\Gamma\left(  x_{0},u\right)  $ of the \textquotedblleft frozen
operator\textquotedblright\
\[
L_{0}=\sum_{i,j=1}^{q}a_{ij}\left(  x_{0}\right)  X_{i}X_{j}.
\]
Actually, to apply the real variable machinery to the concrete singular
integral operators which appear in our representation formulas, we have to
resort to the technique of expansion of $\Gamma\left(  x_{0},\cdot\right)  $
in spherical harmonics, first employed by Calder\'{o}n-Zygmund in \cite{cz}
and already used in all the aforementioned papers dealing with $L^{p}$
estimates for nonvariational operators structured on H\"{o}rmander's vector
fields. To get a control on the coefficients of this expansion, we need some
upper bound on the $u$-derivatives of any order of $\Gamma\left(
x_{0},u\right)  $, say for $\left\vert u\right\vert =1$, uniform with respect
to $x_{0}$. In \cite{bb1} the following estimate was proved:
\begin{equation}
\underset{x\in\Omega,\left\Vert u\right\Vert =1}{{\sup}}\left\vert \left(
\frac{\partial}{\partial u}\right)  ^{\beta}\Gamma\left(  x;u\right)
\right\vert \leq c\left(  \beta\right)  \text{,} \label{unif BB1}%
\end{equation}
for any multiindex $\beta$. In the present situation we also need a control on
the $LMO$ norm, and not just the $L^{\infty}$ norm, of the coefficients of the
expansion. To get this, the bound (\ref{unif BB1}) is not enough, and we need
to establish the following:
\begin{equation}
\sup_{x_{1},x_{2}\in\Omega,\left\vert u\right\vert =1}\left\vert \left(
\frac{\partial}{\partial u}\right)  ^{\beta}\Gamma\left(  x_{1},u\right)
-\left(  \frac{\partial}{\partial u}\right)  ^{\beta}\Gamma\left(
x_{2},u\right)  \right\vert \leq c_{\beta}\left\Vert A\left(  x_{1}\right)
-A\left(  x_{2}\right)  \right\Vert \label{unif BF}%
\end{equation}
where $\left\Vert A\left(  \cdot\right)  \right\Vert $ is the matrix norm of
the coefficients $\left\{  a_{ij}\right\}  $. To establish a bound on the
derivatives of any order of a fundamental solution, uniform with respect to
some parameter, is always a difficult task when, as happens for operators
structured on H\"{o}rmander's vector fields, we cannot rely on any kind of
explicit formula for the fundamental solution. We will get the bound
(\ref{unif BF}) in \S \ \ref{sec uniform bound}, adapting results and
techniques contained in series of papers by Bonfiglioli-Lanconelli-Uguzzoni
(see \cite{BU}, \cite{BLU1}, \cite{BLU2}) in the context of Gaussian bounds
for operators (\ref{nonvar 2}). We point out that the reason why we did not
consider in this paper operators with drift $X_{0}$ is only related to this
part of the proof. Namely, the papers \cite{BU}, \cite{BLU1}, \cite{BLU2} deal
with operators (\ref{nonvar 1}) or (\ref{nonvar 2}), but not (\ref{nonvar 3}),
therefore proving (\ref{unif BF}) in presence of a drift would require a much
deeper revision of the techniques used in those papers, and perhaps a
completely different approach.

\bigskip

\textbf{Plan of the paper}

Section 2 contains some known facts, the definition and basic properties of
\emph{local }$BMO$-type spaces and the statement of our assumptions and main result.

In section 3 we write the representation formulas that we need for $X_{i}%
X_{j}u$ in terms of $Lu$. These formulas involve singular integrals with
\textquotedblleft variable kernels\textquotedblright\ and their commutators.
By the classical technique of expansion in spherical harmonics we rewrite
these operators in series of singular integral operators of convolution type.
We state some uniform bound on the fundamental solution of the frozen operator
and show their use in proving suitable bounds on the coefficients of the
expansion in spherical harmonics. Section 4 contains the core of the real
analysis machinery: $BMO_{loc}$ estimates for singular integrals and their
commutators are established, first for convolution kernels and then in the
general case, together with a number of other useful results, in particular a
local version of the one stating that $LMO\cap L^{\infty}$ multiplies $BMO$.
Section 5 contains the proof of our main result, in three steps: first,
exploiting all the results of the previous sections, we prove local estimates
for functions with small compact support; second, exploiting several techiques
and results from \cite{BB3}, we prove local estimates for functions with small
noncompact support; third, we conclude the proof of the result on any bounded
domain. Finally, the Appendix contains the proof of the uniform bound on the
fundamental solution of the frozen operator. Although this bound is crucial in
the paper, we have preferred postponing its proof to the Appendix because the
techniques employed there are completely different from those of the previous sections.

\bigskip

\textbf{Acknoledgement. }We wish to thank Francesco Uguzzoni for useful talks
about the proof of Theorem \ref{Thm unif BF}.

\section{Preliminaries}

\subsection{Carnot groups, vector fields and their
metric\label{subsec homogeneous groups}}

Here we recall a number of known definitions and facts about homogeneous
groups and left invariant vector fields. For the justification of our
assertions, further details and examples, we refer to \cite[p. 618-622]%
{stein}, \cite[\S \ 1.3]{BLUbook}, \cite{fo}, \cite{bb1}.

We call \emph{Carnot group }or \emph{stratified homogeneous group} the space
$\mathbb{R}^{N}$ equipped with a Lie group structure, (\textquotedblleft
translations\textquotedblright) together with a family of \textquotedblleft
dilations\textquotedblright\ that are group automorphisms and are given by%
\begin{equation}
D\left(  \lambda\right)  :\left(  x_{1},\ldots,x_{N}\text{ }\right)
\mapsto\left(  \lambda^{\omega_{1}}x_{1},\ldots,\lambda^{\omega_{N}}%
x_{N}\right)  \label{dilations}%
\end{equation}
for $\lambda>0$, where%
\[
\left(  \omega_{1},...,\omega_{N}\right)  =\left(
1,1,...,1,2,2,...,2,...,s,s,...,s\right)
\]
for some positive integer $s$. We denote by $\circ$ the translation, and
assume that the origin is the group identity and the Euclidean opposite is the
group inverse. We will denote by $G$ the space $\mathbb{R}^{N}$ with this
structure of homogeneous group, and we will write $c(G)$ for a constant
depending on the numbers $N$, $\omega_{1}$,$\ldots,\omega_{N}$ and the group
law $\circ$.

We say that a differential operator $Y$ on $\mathbb{R}^{N}$ is
\emph{homogeneous of degree} $\beta>0$ if
\[
Y\,\left(  f\,\left(  D\left(  \lambda\right)  x\right)  \right)
=\lambda^{\beta}\left(  Yf\right)  \left(  D\left(  \lambda\right)  x\right)
\]
for every test function $f$, $\lambda>0$, $x\in\mathbb{R}^{N}$. Also, we say
that a function $f$ is \emph{homogeneous of degree} $\alpha\in\mathbb{R}$ if
\[
f\,\left(  D\left(  \lambda\right)  x\right)  =\lambda^{\alpha}\,f\,\left(
x\right)  \text{ \ }\forall\lambda>0\text{, }x\in\mathbb{R}^{N}.
\]
Clearly, if $Y$ is a differential operator homogeneous of degree $\beta$ and
$f$ is a homogeneous function of degree $\alpha$, then $Yf$ is homogeneous of
degree $\alpha-\beta$.

Let us consider now the Lie algebra $\ell$ associated to the group $G$, that
is, the Lie algebra of left-invariant vector fields, with the Lie bracket
given by the commutator of vector fields%
\[
\left[  X,Y\right]  =XY-YX.
\]
We can fix a basis $X_{1},\ldots,X_{N}$ in $\ell$ choosing $X_{i}$ as the left
invariant vector field which agrees with $\partial_{x_{i}}$ at the origin. It
turns out that $X_{i}$ is homogeneous of degree $\omega_{i}$. In particular,
let $X_{1},X_{2},...,X_{q}$ be the elements of the basis which are homogeneous
of degree $1$. The Lie algebra $\ell$ turns out to be nilpotent and
stratified:
\begin{align*}
\ell &  =%
{\textstyle\bigoplus\limits_{i=1}^{s}}
V_{i}\text{ \ \ with }\left[  V_{1},V_{j}\right]  =V_{j+1}\text{ for }j\leq
s-1,\left[  V_{1},V_{j}\right]  =\{0\}\text{ otherwise;}\\
V_{1}  &  =\text{span}\left(  X_{1},X_{2},...,X_{q}\right)  .
\end{align*}
The number $s$ is called the \emph{step }of the Lie algebra. We also say that
the vector fields $X_{1},X_{2},...,X_{q}$ satisfy H\"{o}rmander's condition at
step $s$.

\bigskip

Let us introduce the \emph{control distance }$d$ induced by the vector fields.
This is a concept which can be defined for any system of H\"{o}rmander's
vector fields (even in absence of a group structure) but which possesses more
properties in Carnot groups.

\begin{definition}
\label{c-c-distance}For any $\delta>0$, $x,y\in\mathbb{R}^{N}$, let
$C_{\delta}\left(  x,y\right)  $ be the class of absolutely continuous
mappings $\varphi:[0,1]\rightarrow\Omega$ which satisfy%
\[
\varphi^{\prime}(t)=\sum_{i=1}^{q}a_{i}(t)\left(  X_{i}\right)  _{\varphi
\left(  t\right)  }\text{ a.e. }t\in\left(  0,1\right)
\]
with $\left\vert a_{i}(t)\right\vert \leq\delta$ for $i=1,...,q$,
$\varphi\left(  0\right)  =x,\varphi\left(  1\right)  =y$. We define%
\[
d(x,y)=\inf\left\{  \delta>0:C_{\delta}\left(  x,y\right)  \neq\varnothing
\right\}  .
\]

\end{definition}

Note that the finiteness of $d\left(  x,y\right)  $ for any two points
$x,y\in\Omega$ is not a trivial fact, but depends on a connectivity result
known as \textquotedblleft Chow's theorem\textquotedblright; it can be proved
that $d$ is actually a distance in $\mathbb{R}^{N}$, finite for any couple of
points. Moreover:

\begin{proposition}
\label{invariant} \label{Prop homogeneous distance}The distance $d$ is
translation invariant and $1$-homogeneous for dilations on the group.
\end{proposition}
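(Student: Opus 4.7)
The plan is to prove each property separately by setting up a bijection between the curve classes $C_\delta(x,y)$ defined in Definition \ref{c-c-distance}, transporting curves by the relevant group operation (a left translation for the first property, a dilation for the second). In each case, the essential ingredient is the transformation rule for the vector fields $X_1,\dots,X_q$ under the operation, which the earlier discussion in \S\ref{subsec homogeneous groups} supplies.

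For \emph{translation invariance}, fix $z\in\mathbb{R}^N$ and let $L_z(x)=z\circ x$ be left translation. Given any $\varphi\in C_\delta(x,y)$ I would set $\psi(t)=L_z(\varphi(t))$, so $\psi$ joins $z\circ x$ to $z\circ y$ and is absolutely continuous. Differentiating and using the chain rule gives $\psi'(t)=(dL_z)_{\varphi(t)}\varphi'(t)$. The left invariance of each $X_i$ amounts exactly to $(dL_z)_{x}(X_i)_x=(X_i)_{z\circ x}$, hence
\[
\psi'(t)=\sum_{i=1}^q a_i(t)\,(dL_z)_{\varphi(t)}(X_i)_{\varphi(t)}=\sum_{i=1}^q a_i(t)\,(X_i)_{\psi(t)},
\]
with the same control $|a_i(t)|\le\delta$. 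Thus $\psi\in C_\delta(z\circ x,z\circ y)$. Applying the same construction to $L_{z^{-1}}$ gives the reverse inclusion, so the map $\varphi\mapsto L_z\circ\varphi$ is a bijection between $C_\delta(x,y)$ and $C_\delta(z\circ x,z\circ y)$, yielding $d(z\circ x,z\circ y)=d(x,y)$.

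For \emph{$1$-homogeneity}, fix $\lambda>0$ and given $\varphi\in C_\delta(x,y)$ set $\psi(t)=D(\lambda)\varphi(t)$. Since each $X_i$ with $i=1,\dots,q$ is homogeneous of degree $1$, its differential satisfies $(dD(\lambda))_x(X_i)_x=\lambda\,(X_i)_{D(\lambda)x}$. Then the chain rule gives
\[
\psi'(t)=\sum_{i=1}^q a_i(t)(dD(\lambda))_{\varphi(t)}(X_i)_{\varphi(t)}=\sum_{i=1}^q\bigl(\lambda a_i(t)\bigr)(X_i)_{\psi(t)},
\]
so $\psi\in C_{\lambda\delta}(D(\lambda)x,D(\lambda)y)$ with the new bound $|\lambda a_i(t)|\le\lambda\delta$. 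The map $\varphi\mapsto D(\lambda)\circ\varphi$ is therefore a bijection between $C_\delta(x,y)$ and $C_{\lambda\delta}(D(\lambda)x,D(\lambda)y)$, its inverse being given by $D(1/\lambda)$. Taking infimum over admissible $\delta$ yields $d(D(\lambda)x,D(\lambda)y)=\lambda\,d(x,y)$.

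Neither step presents a serious obstacle: the argument is essentially bookkeeping, once the correct transformation rule for the vector fields has been identified. The only point which requires mild care is the precise relation between homogeneity of degree $1$ for a vector field (in the form stated for $Y$ acting on test functions) and the action of the differential $dD(\lambda)$ on the tangent vectors $(X_i)_x$; this is a direct chain-rule computation from the homogeneity identity $X_i(f\circ D(\lambda))(x)=\lambda(X_if)(D(\lambda)x)$, and analogously for translations.
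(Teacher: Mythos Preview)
Your proof is correct and follows essentially the same strategy as the paper: transport admissible curves by left translation (respectively dilation) and verify, via the transformation rule for the $X_i$'s, that the transported curve lies in the appropriate class $C_\delta$ (respectively $C_{\lambda\delta}$). The only cosmetic difference is that the paper carries out the dilation step in coordinates, writing $X_i=\sum_j b_{ij}(u)\partial_{u_j}$ with $b_{ij}$ homogeneous of degree $\omega_j-1$ and checking the identity componentwise, whereas you package the same computation as the pushforward relation $(dD(\lambda))_x(X_i)_x=\lambda(X_i)_{D(\lambda)x}$.
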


This fact is probably known, but we are unable to give a precise reference, so
we will prove it.

\begin{proof}
Let $B\left(  x,r\right)  $ be the $d$-ball of center $x$ and radius $r$. Let
us prove that
\begin{align*}
y  &  \in B\left(  x,r\right)  \Longrightarrow x^{-1}\circ y\in B\left(
0,r\right) \\
y  &  \in B\left(  0,r\right)  \Longrightarrow D\left(  \lambda\right)  y\in
B\left(  0,\lambda r\right)  .
\end{align*}

Let $y\in B\left(  x,r\right)  ,$ then for every $\delta<r$ there exists
$\varphi\in C_{\delta}\left(  x,y\right)  $. Let%
\[
\varphi_{x}\left(  t\right)  =x^{-1}\circ\varphi\left(  t\right)  ,
\]
and let us show that $\varphi_{x}\in C_{\delta}\left(  0,x^{-1}\circ y\right)
$. Clearly $\varphi_{x}\left(  0\right)  =0$ and $\varphi_{x}\left(  1\right)
=x^{-1}\circ y$. Moreover, let $J_{x}$ denote the Jacobian matrix of the
function $y\mapsto x^{-1}\circ y,$ then%
\begin{align*}
\varphi_{x}^{\prime}(t)  &  =J_{x}\left(  \varphi\left(  t\right)  \right)
\cdot\varphi^{\prime}\left(  t\right)  =J_{x}\left(  \varphi\left(  t\right)
\right)  \cdot\overset{q}{\underset{i=0}{\sum}}a_{i}(t)\left(  X_{i}\right)
_{\varphi\left(  t\right)  }\\
&  =\overset{q}{\underset{i=0}{\sum}}a_{i}(t)J_{x}\left(  \varphi\left(
t\right)  \right)  \cdot\left(  X_{i}\right)  _{\varphi\left(  t\right)
}=\overset{q}{\underset{i=0}{\sum}}a_{i}(t)\left(  X_{i}\right)  _{x^{-1}%
\circ\varphi\left(  t\right)  }=\overset{q}{\underset{i=0}{\sum}}%
a_{i}(t)\left(  X_{i}\right)  _{\varphi_{x}\left(  t\right)  }%
\end{align*}
where the up to last identity follows by the translation invariance of the
$X_{i}$'s (see \cite[Prop. 1.2.3 p.14]{BLUbook}). So the first assertion is proved.

Let now $y\in B\left(  0,r\right)  ,$ then for every $\delta<r$ there exists
$\varphi\in C_{\delta}\left(  0,y\right)  $. Let%
\[
\varphi_{\lambda}\left(  t\right)  =D\left(  \lambda\right)  \varphi\left(
t\right)  \text{.}%
\]
Then $\varphi_{\lambda}\left(  0\right)  =0;\varphi_{\lambda}\left(  1\right)
=D\left(  \lambda\right)  y$. Recall that%
\[
X_{i}=\sum_{j=1}^{N}b_{ij}\left(  u\right)  \partial_{u_{j}}%
\]
with $b_{ij}$ homogeneous of degree $\omega_{j}-1$, for $i=1,2,...,q$. Hence%
\[
b_{ij}\left(  \varphi_{\lambda}\left(  t\right)  \right)  =\lambda^{\alpha
_{j}-1}b_{ij}\left(  \varphi\left(  t\right)  \right)
\]
and%
\begin{align*}
\varphi_{\lambda}^{\prime}(t)_{j}  &  =\lambda^{\omega_{j}}\varphi_{j}%
^{\prime}\left(  t\right)  =\lambda^{\omega_{j}}\sum_{i=1}^{q}a_{i}%
(t)b_{ij}\left(  \varphi\left(  t\right)  \right)  =\\
&  =\overset{q}{\underset{i=0}{\sum}}\lambda a_{i}(t)b_{ij}\left(
\varphi_{\lambda}\left(  t\right)  \right)
\end{align*}
with%
\[
\left\vert \lambda a_{i}(t)\right\vert \leq\lambda\delta,
\]
which shows that $\varphi_{\lambda}\in C_{\lambda\delta}\left(  x,y\right)  $.
Since this holds for any $\delta<r,$ we conclude $y\in B\left(  0,\lambda
r\right)  $.
\end{proof}

\bigskip

We can also define in $\mathbb{R}^{N}$ a \emph{homogeneous norm} $\left\Vert
\cdot\right\Vert $ as follows. For any $x\in\mathbb{R}^{N}$, $x\neq0$, set
\[
\left\Vert x\right\Vert =\rho\Leftrightarrow\left\vert D\left(  \,\frac
{1}{\rho}\right)  x\right\vert =1\text{,}%
\]
where $\left\vert \cdot\right\vert $ denotes the Euclidean norm; also, let
$\left\Vert 0\right\Vert =0$. Then:

$\left\Vert D(\lambda)x\right\Vert =\lambda\left\Vert x\right\Vert $ for every
$x\in\mathbb{R}^{N}$, $\lambda>0$;

the set $\{x\in\mathbb{R}^{N}:\left\Vert x\right\Vert =1\}$ coincides with the
euclidean unit sphere $\sum_{N}$;

the function $x\mapsto\left\Vert x\right\Vert $ is smooth outside the origin;

there exists $c(G)\geq1$ such that for every $x$, $y\in\mathbb{R}^{N}$%
\begin{align*}
\left\Vert x\circ y\right\Vert  &  \leq c(\left\Vert x\right\Vert +\left\Vert
y\right\Vert )\text{ and }\left\Vert x^{-1}\right\Vert =\left\Vert
x\right\Vert \text{;}\\
\frac{1}{c}\left\vert y\right\vert  &  \leq\left\Vert y\right\Vert \leq
c\left\vert y\right\vert ^{1/s}\text{ if }\left\Vert y\right\Vert \leq1,
\end{align*}
hence the function%
\begin{equation}
\rho(x,y)=\left\Vert y^{-1}\circ x\right\Vert \label{rho}%
\end{equation}
is a \emph{quasidistance}, that is:%
\begin{align*}
\rho\left(  x,y\right)   &  \geq0\text{ and }\rho\left(  x,y\right)  =0\text{
if and only if }x=y;\\
\rho\left(  y,x\right)   &  =\rho\left(  x,y\right) \\
\rho\left(  x,y\right)   &  \leq c\,\left(  \rho\left(  x,z\right)
+\rho\left(  z,y\right)  \right)
\end{align*}
for every $x$, $y$, $z\in\mathbb{R}^{N}$ and some positive constant
$c(G)\geq1$. If we define the $\rho$-balls as
\[
B_{\rho}\left(  x,r\right)  =\left\{  y\in\mathbb{R}^{N}:\rho\left(
x,y\right)  <r\right\}  ,
\]
then $B_{\rho}\left(  0,r\right)  =D\left(  r\right)  B_{\rho}\left(
0,1\right)  $. The Lebesgue measure in $\mathbb{R}^{N}$ is the Haar measure of
$G$, hence%
\[
\left\vert B_{\rho}(x,r)\right\vert =\left\vert B_{\rho}(0,1)\right\vert
r^{Q}\text{ }\forall x\in\mathbb{R}^{N},r>0,
\]
where
\[
Q=\omega_{1}+\ldots+\omega_{N}%
\]
(with $\omega_{i}$ as in (\ref{dilations})) is the \emph{homogeneous dimension
}of $\mathbb{R}^{N}$.

A consequence of the Prop. \ref{invariant} is that the function $d\left(
x,0\right)  $ is another \emph{homogeneous norm }in the sense of Folland (see
\cite{fo}), equivalent to the function $\left\Vert x\right\Vert $ defined
above. In particular, $d$ and $\rho$ are globally equivalent.

\subsection{Function spaces defined by local mean
oscillations\label{subsec function spaces}}

Throughout the following, we will assume that $X_{1},X_{2},...,X_{q}$ is a
system of left invariant homogeneous H\"{o}rmander's vector fields on a Carnot
group in $\mathbb{R}^{N}$, as described in
\S \ \ref{subsec homogeneous groups}, and $d$ is the control distance induced
by the vector fields in $\mathbb{R}^{N}$.

Let $\Omega$ be a bounded open connected subset of $\mathbb{R}^{N}$. In the
following definitions, the balls are always taken with respect to the distance
$d$.

\begin{definition}
[$BMO^{p}$ spaces]For $p\in\lbrack1,\infty)$ we say that $f\in BMO^{p}\left(
\Omega\right)  $ if%
\[
\left\Vert f\right\Vert _{BMO^{p}\left(  \Omega\right)  }=\left[  f\right]
_{BMO\left(  \Omega\right)  }+\left\Vert f\right\Vert _{L^{p}\left(
\Omega\right)  }<\infty
\]
with%
\[
\left[  f\right]  _{BMO\left(  \Omega\right)  }=\sup_{x\in\Omega,r>0}\frac
{1}{\left\vert B\left(  x,r\right)  \cap\Omega\right\vert }\int_{B\left(
x,r\right)  \cap\Omega}\left\vert f\left(  y\right)  -f_{B\left(  x,r\right)
\cap\Omega}\right\vert dy
\]
where $f_{A}=\frac{1}{\left\vert A\right\vert }\int f$.
\end{definition}

\begin{definition}
[Local $BMO$ spaces]Let $f\in L_{loc}^{1}\left(  \Omega\right)  $. We say that
$f\in BMO_{loc}\left(  \Omega\right)  $ if
\[
\left[  f\right]  _{BMO_{loc}\left(  \Omega\right)  }=\sup_{B\left(
x,r\right)  \subset\Omega}\frac{1}{|B\left(  x,r\right)  |}{\int}_{B\left(
x,r\right)  }|f(y)-f_{B\left(  x,r\right)  }|dy<+\infty\text{.}%
\]
Note that the requirement $B\left(  x,r\right)  \subset\Omega$ is meaningful
because the distance $d$ is defined in the whole $\mathbb{R}^{N}$.

Let $f\in L_{loc}^{1}\left(  \Omega\right)  $ and $\Omega_{1}\Subset\Omega
_{2}\subseteq\Omega$. We say that $f\in BMO_{loc}\left(  \Omega_{1},\Omega
_{2}\right)  $ if
\[
\left[  f\right]  _{BMO_{loc}\left(  \Omega_{1},\Omega_{2}\right)  }%
=\sup_{x\in\Omega_{1},B\left(  x,r\right)  \subset\Omega_{2}}\frac
{1}{|B\left(  x,r\right)  |}{\int}_{B\left(  x,r\right)  }|f(y)-f_{B\left(
x,r\right)  }|dy<+\infty\,\text{.}%
\]

\end{definition}

Note that $f\in BMO_{loc}\left(  \Omega\right)  $ if and only if $f\in
BMO_{loc}\left(  \Omega_{1},\Omega\right)  $ for any $\Omega_{1}\Subset\Omega$.

Moreover, for any $\Omega_{1}\Subset\Omega_{2}\subseteq\Omega$ we have the
following inclusions%
\[
BMO_{loc}\left(  \Omega\right)  \subset BMO_{loc}\left(  \Omega_{1},\Omega
_{2}\right)
\]
with%
\[
\left[  f\right]  _{BMO_{loc}\left(  \Omega_{1},\Omega_{2}\right)  }%
\leq\left[  f\right]  _{BMO_{loc}\left(  \Omega\right)  },
\]
and
\[
BMO_{loc}\left(  \Omega_{2}\right)  \subset BMO_{loc}\left(  \Omega_{1}%
,\Omega_{2}\right)  \subset BMO_{loc}\left(  \Omega_{1}\right)
\]
with%
\[
\left[  f\right]  _{BMO_{loc}\left(  \Omega_{1}\right)  }\leq\left[  f\right]
_{BMO_{loc}\left(  \Omega_{1},\Omega_{2}\right)  }\leq\left[  f\right]
_{BMO_{loc}\left(  \Omega_{2}\right)  }\text{.}%
\]

\begin{definition}
[Local $LMO$ spaces]Let $\Omega_{1}\Subset\Omega_{2}\subseteq\Omega$. We say
that $f\in LMO_{loc}\left(  \Omega_{1},\Omega_{2}\right)  $ if
\begin{align*}
&  \left[  f\right]  _{LMO_{loc}\left(  \Omega_{1},\Omega_{2}\right)  }=\\
&  =\sup_{x\in\Omega_{1},B\left(  x,r\right)  \subset\Omega_{2}}\frac
{1+\log\frac{diam\Omega_{2}}{r}}{|B\left(  x,r\right)  |}{\int}_{B\left(
x,r\right)  }|f(y)-f_{B\left(  x,r\right)  }|dy<+\infty.
\end{align*}

Analogously we define the space $LMO_{loc}\left(  \Omega\right)  $ taking the
sup over all the balls $B\left(  x,r\right)  \subset\Omega$.

We say that $f\in VLMO_{loc}\left(  \Omega\right)  $ if $f\in LMO_{loc}\left(
\Omega\right)  $ and%
\[
\eta_{f}\left(  r\right)  =\sup_{B\left(  x,\rho\right)  \subset\Omega
,\rho\leq r}\frac{1+\log\frac{diam\Omega}{\rho}}{|B\left(  x,\rho\right)
|}{\int}_{B\left(  x,\rho\right)  }|f(y)-f_{B\left(  x,\rho\right)
}|dy\rightarrow0\text{ for }r\rightarrow0\text{.}%
\]

\end{definition}

Let $\overline{x}\in\Omega,0<r_{1}<r_{2}$ such that $B\left(  \overline
{x},r_{2}\right)  \subset\Omega$. Then%
\[
VLMO_{loc}\left(  \Omega\right)  \subset LMO_{loc}\left(  B\left(
\overline{x},r_{1}\right)  ,B\left(  \overline{x},r_{2}\right)  \right)
\]
with%
\begin{equation}
\left[  f\right]  _{LMO_{loc}\left(  B\left(  \overline{x},r_{1}\right)
,B\left(  \overline{x},r_{2}\right)  \right)  }\leq\eta_{f}\left(
2r_{2}\right)  . \label{eta f}%
\end{equation}

\begin{definition}
[Local $BMO^{p}$ spaces]Let $p\in\left(  1,\infty\right)  $ and $\Omega
_{1}\Subset\Omega_{2}\subseteq\Omega$. We say that $f\in BMO_{loc}^{p}\left(
\Omega_{1},\Omega_{2}\right)  $ if%
\[
\left\Vert f\right\Vert _{BMO_{loc}^{p}\left(  \Omega_{1},\Omega_{2}\right)
}\equiv\left\Vert f\right\Vert _{L^{p}\left(  \Omega_{2}\right)  }+\left[
f\right]  _{BMO_{loc}\left(  \Omega_{1},\Omega_{2}\right)  }<\infty.
\]
Also, we say that $f\in BMO_{loc}^{p}\left(  \Omega_{1}\right)  $ if%
\[
\left\Vert f\right\Vert _{BMO_{loc}^{p}\left(  \Omega_{1}\right)  }%
\equiv\left\Vert f\right\Vert _{L^{p}\left(  \Omega_{1}\right)  }+\left[
f\right]  _{BMO_{loc}\left(  \Omega_{1}\right)  }<\infty.
\]

\end{definition}

We have
\[
BMO_{loc}^{p}\left(  \Omega_{2}\right)  \subset BMO_{loc}^{p}\left(
\Omega_{1},\Omega_{2}\right)  \subset BMO_{loc}^{p}\left(  \Omega_{1}\right)
.
\]

Let us note that the spaces $BMO_{loc}\left(  \Omega_{1},\Omega_{2}\right)  $,
$BMO_{loc}^{p}\left(  \Omega_{1},\Omega_{2}\right)  $ and so on, are
increasing with respect to both $\Omega_{1}$ and $\Omega_{2}$. The following
fact, which will be useful several times, says that for compactly supported
functions also an inclusion in the reverse order holds, in some sense.

We will often use $BMO_{loc}^{p}\left(  \Omega_{1},\Omega_{2}\right)  $ spaces
with $\Omega_{1},\Omega_{2}$ two concentric balls; just to shorten notations,
we will set%
\[
BMO_{loc}\left(  B\left(  \overline{x},R_{1};R_{2}\right)  \right)  \equiv
BMO_{loc}\left(  B\left(  \overline{x},R_{1}\right)  ,B\left(  \overline
{x},R_{2}\right)  \right)  .
\]

\begin{lemma}
\label{Lem KR}Let $f$ $\in BMO_{loc}^{p}\left(  B\left(  \overline{x},R;KR
\right)  \right)  $ for some $K>3,$ with sprt$f\subset B\left(  \overline
{x},R\right)  $. Then%
\[
\left[  f\right]  _{BMO_{loc}\left(  B\left(  \overline{x},R; KR \right)
\right)  }\leq2\left(  \left[  f\right]  _{BMO_{loc}\left(  B\left(
\overline{x},R;3R \right)  \right)  }+\frac{1}{\left\vert B\left(
\overline{x},R\right)  \right\vert ^{1/p}}\left\Vert f\right\Vert
_{L^{p}\left(  B\left(  \overline{x},R\right)  \right)  }\right)  .
\]

\end{lemma}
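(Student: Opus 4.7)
The plan is to split the supremum defining the left-hand seminorm into two cases depending on whether the ball $B(x,r)$ (with $x\in B(\overline{x},R)$ and $B(x,r)\subset B(\overline{x},KR)$) lies inside $B(\overline{x},3R)$ or not, and to handle the ``large-radius'' case using the compact support of $f$.

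First, if $B(x,r)\subset B(\overline{x},3R)$, then by definition of the seminorm the corresponding mean oscillation is $\leq [f]_{BMO_{loc}(B(\overline{x},R;3R))}$, and nothing more is needed.

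The second case is the one that really uses the hypotheses. Suppose $B(x,r)\not\subset B(\overline{x},3R)$, so that there is $y\in B(x,r)$ with $d(\overline{x},y)\geq 3R$. Using that $d$ is a genuine distance on $G$ and that $x\in B(\overline{x},R)$, the triangle inequality gives $3R\leq d(\overline{x},y)<R+r$, hence $r>2R$. Now apply the standard bound $\frac{1}{|B(x,r)|}\int_{B(x,r)}|f-f_{B(x,r)}|\leq \frac{2}{|B(x,r)|}\int_{B(x,r)}|f|$, use $\mathrm{supp}\,f\subset B(\overline{x},R)$ to restrict the integral, and apply H\"older:
\[
\frac{1}{|B(x,r)|}\int_{B(x,r)}|f-f_{B(x,r)}|\leq \frac{2\,|B(\overline{x},R)|^{1/p'}}{|B(x,r)|}\,\|f\|_{L^p(B(\overline{x},R))}.
\]
The point is that since balls in a Carnot group satisfy $|B(z,s)|=|B(0,1)|s^{Q}$, the inequality $r>2R$ yields $|B(x,r)|\geq |B(\overline{x},R)|$, hence
\[
\frac{|B(\overline{x},R)|^{1/p'}}{|B(x,r)|}=\frac{1}{|B(\overline{x},R)|^{1/p}}\cdot\frac{|B(\overline{x},R)|}{|B(x,r)|}\leq \frac{1}{|B(\overline{x},R)|^{1/p}}.
\]
This bounds the mean oscillation in Case 2 by $\frac{2}{|B(\overline{x},R)|^{1/p}}\|f\|_{L^p(B(\overline{x},R))}$.

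Taking the supremum over all admissible balls, the seminorm on the left is at most the sum of the two case-bounds, which is in turn bounded by
\[
2\left([f]_{BMO_{loc}(B(\overline{x},R;3R))}+\frac{1}{|B(\overline{x},R)|^{1/p}}\|f\|_{L^p(B(\overline{x},R))}\right),
\]
proving the claim. There is no genuine obstacle here: the only place where one must be careful is the geometric step forcing $r>2R$, which depends on $d$ being a true distance (rather than only a quasi-distance) and on $K>3$ being used only implicitly through the requirement $B(x,r)\subset B(\overline{x},KR)$. The constant $3$ in the intermediate seminorm is used solely to ensure a strict inequality $r>2R$ that is uniform in $K$.
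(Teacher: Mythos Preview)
Your proof is correct and follows essentially the same approach as the paper: the same case split according to whether $B(x,r)\subset B(\overline{x},3R)$, the same geometric observation that in the complementary case $r>2R$ (the paper phrases this as $B(x,r)\supset B(\overline{x},R)$, which is equivalent once $x\in B(\overline{x},R)$), and the same use of the support condition together with H\"older. The only cosmetic difference is that in Case~2 the paper chooses the comparison constant $c=f_{B(\overline{x},R)}$, producing both summands on the right, whereas you take $c=0$ and obtain only the $L^p$ term there; your variant is slightly cleaner.
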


\begin{proof}
Let us consider a ball $B\left(  x_{0},r\right)  $ with $x_{0}\in B\left(
\overline{x},R\right)  $ and $B\left(  x_{0},r\right)  \subset B\left(
\overline{x},KR\right)  .$

If $B\left(  x_{0},r\right)  \subset B\left(  \overline{x},3R\right)  ,$ then
obviously%
\[
\frac{1}{|B\left(  x_{0},r\right)  |}{\int}_{B\left(  x_{0},r\right)
}|f(y)-f_{B\left(  x_{0},r\right)  }|dy\leq\left[  f\right]  _{BMO_{loc}%
\left(  B\left(  \overline{x},R;3R\right)  \right)  },
\]
so let us assume $B\left(  x_{0},r\right)  \varsubsetneq B\left(  \overline
{x},3R\right)  $. This means that $B\left(  x_{0},r\right)  \supset B\left(
\overline{x},R\right)  $, hence for any $c\in\mathbb{R}$%
\begin{align*}
&  \frac{1}{|B\left(  x_{0},r\right)  |}{\int}_{B\left(  x_{0},r\right)
}|f(y)-f_{B\left(  x_{0},r\right)  }|dy\leq\frac{2}{|B\left(  x_{0},r\right)
|}{\int}_{B\left(  x_{0},r\right)  }|f(y)-c|dy\\
&  =\frac{2}{|B\left(  x_{0},r\right)  |}\left(  {\int}_{B\left(  \overline
{x},R\right)  }|f(y)-c|dy+\left\vert c\right\vert \left\vert B\left(
x_{0},r\right)  \setminus B\left(  \overline{x},R\right)  \right\vert \right)
\\
&  \leq\frac{2}{|B\left(  \overline{x},R\right)  |}{\int}_{B\left(
\overline{x},R\right)  }|f(y)-c|dy+2\left\vert c\right\vert \frac{\left\vert
B\left(  x_{0},r\right)  \setminus B\left(  \overline{x},R\right)  \right\vert
}{|B\left(  x_{0},r\right)  |}%
\end{align*}
choosing $c=f_{B\left(  \overline{x},R\right)  }$%
\begin{align*}
&  \leq\frac{2}{|B\left(  \overline{x},R\right)  |}{\int}_{B\left(
\overline{x},R\right)  }|f(y)-f_{B\left(  \overline{x},R\right)
}|dy+2\left\vert f_{B\left(  \overline{x},R\right)  }\right\vert \\
&  \leq2\left(  \left[  f\right]  _{BMO_{loc}\left(  B\left(  \overline
{x},R;3R\right)  \right)  }+\frac{1}{\left\vert B\left(  \overline
{x},R\right)  \right\vert ^{1/p}}\left\Vert f\right\Vert _{L^{p}\left(
B\left(  \overline{x},R\right)  \right)  }\right)  .
\end{align*}

\end{proof}

In the sequel we will need the following proposition that gives us a
comparison between local and global $BMO^{p}$ spaces.

\begin{proposition}
\label{Prop norme locali globali}Let $s>0,p\in\lbrack1,\infty)$ and
$\overline{x}\in\mathbb{R}^{N}$ be fixed.

\noindent(a) If $f\in BMO_{loc}^{p}\left(  B\left(  \overline{x},s;3s\right)
\right)  $, then $f\in BMO^{p}\left(  B\left(  \overline{x},s\right)  \right)
$ with
\[
\left\Vert f\right\Vert _{BMO^{p}\left(  B\left(  \overline{x},s\right)
\right)  }\leq c\left\Vert f\right\Vert _{BMO_{loc}^{p}\left(  B\left(
\overline{x},s;3s\right)  \right)  }%
\]
for some constant $c=c\left(  G\right)  $.

\noindent(b) If $f\in BMO^{p}\left(  B\left(  \overline{x},R\right)  \right)
$ for some $R>s$ and sprt$f\subset B\left(  \overline{x},s\right)  $, then the
function $\widetilde{f}$ obtained extending $f$ to zero outside $B\left(
\overline{x},R\right)  $ belongs to $BMO_{loc}^{p}\left(  B\left(
\overline{x},s;3s\right)  \right)  $, with%
\[
\left\Vert \widetilde{f}\right\Vert _{BMO_{loc}^{p}\left(  B\left(
\overline{x},s;3s\right)  \right)  }\leq\left\Vert f\right\Vert _{BMO^{p}%
\left(  B\left(  \overline{x},R\right)  \right)  }+\frac{c}{\left(
R-s\right)  ^{Q/p}}\left\Vert f\right\Vert _{L^{p}\left(  B\left(
\overline{x},s\right)  \right)  }%
\]
with $c=c\left(  G,p\right)  $.
\end{proposition}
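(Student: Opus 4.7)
The plan is to verify the $L^{p}$ parts first (both immediate) and then handle the $BMO$ seminorms by a case analysis on the ball sizes, using the genuine triangle inequality for the control distance $d$.

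\emph{Part (a).} The bound $\Vert f\Vert_{L^{p}(B(\overline{x},s))}\leq\Vert f\Vert_{L^{p}(B(\overline{x},3s))}$ is trivial. For the seminorm, I fix $x\in B(\overline{x},s)$ and $r>0$ and split into two regimes. When $r>2s$, the inclusion $B(\overline{x},s)\subset B(x,r)$ follows from $d(y,x)\leq d(y,\overline{x})+d(\overline{x},x)<2s<r$ for every $y\in B(\overline{x},s)$; thus $B(x,r)\cap B(\overline{x},s)=B(\overline{x},s)$, which is itself a ball centered at $\overline{x}\in B(\overline{x},s)$ of radius $s$, contained in $B(\overline{x},3s)$, and its mean oscillation is directly bounded by $[f]_{BMO_{loc}(B(\overline{x},s;3s))}$. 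When $r\leq 2s$, the same triangle inequality gives $B(x,r)\subset B(\overline{x},3s)$; then using the standard inequality $|A|^{-1}\int_{A}|f-f_{A}|\leq 2|A|^{-1}\int_{A}|f-c|$ with $c=f_{B(x,r)}$ and $A=B(x,r)\cap B(\overline{x},s)$ I obtain
\[
\frac{1}{|B(x,r)\cap B(\overline{x},s)|}\int_{B(x,r)\cap B(\overline{x},s)}|f-f_{B(x,r)\cap B(\overline{x},s)}|\,dy\leq\frac{2|B(x,r)|}{|B(x,r)\cap B(\overline{x},s)|}\,[f]_{BMO_{loc}(B(\overline{x},s;3s))}.
\]

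The main obstacle in part (a) is therefore the \emph{corkscrew estimate} $|B(x,r)\cap B(\overline{x},s)|\geq c_{G}|B(x,r)|$ for $x\in B(\overline{x},s)$ and $r\leq 2s$, with $c_{G}=c(G)$. I would establish it by picking a point $z$ on a Carnot--Carath\'eodory geodesic from $x$ to $\overline{x}$ at parameter $t=(d(x,\overline{x})+r-s)/2$ (clamped to the interval $[0,d(x,\overline{x})]$) and checking by straightforward triangle inequalities that $B(z,\varepsilon)\subset B(x,r)\cap B(\overline{x},s)$ with $\varepsilon=(r+s-d(x,\overline{x}))/4\geq r/4$; the $Q$-homogeneity of the Haar measure then gives $|B(z,\varepsilon)|\geq 4^{-Q}|B(x,r)|$. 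This is the step that calls for the most care.

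\emph{Part (b).} The identity $\Vert\widetilde{f}\Vert_{L^{p}(B(\overline{x},3s))}=\Vert f\Vert_{L^{p}(B(\overline{x},s))}$ holds because $\mathrm{sprt}\,f\subset B(\overline{x},s)$ makes the zero extension vanish outside $B(\overline{x},s)$. For the seminorm, I fix $x\in B(\overline{x},s)$ with $B(x,r)\subset B(\overline{x},3s)$. If $B(x,r)\subset B(\overline{x},R)$, then $\widetilde{f}=f$ throughout $B(x,r)$, and the oscillation is directly controlled by $[f]_{BMO(B(\overline{x},R))}$. If instead $B(x,r)\not\subset B(\overline{x},R)$, then the contrapositive of the triangle inequality forces $r>R-s$ (otherwise $d(y,\overline{x})<r+s\leq R$ for every $y\in B(x,r)$), so $|B(x,r)|\geq c_{G}(R-s)^{Q}$. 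Then I bound the oscillation by the constant-$c=0$ trick combined with H\"older's inequality and the support condition:
\[
\frac{1}{|B(x,r)|}\int_{B(x,r)}|\widetilde{f}-\widetilde{f}_{B(x,r)}|\,dy\leq\frac{2}{|B(x,r)|}\int_{B(x,r)}|\widetilde{f}|\,dy\leq\frac{2\,\Vert f\Vert_{L^{p}(B(\overline{x},s))}}{|B(x,r)|^{1/p}}\leq\frac{c\,\Vert f\Vert_{L^{p}(B(\overline{x},s))}}{(R-s)^{Q/p}}.
\]
Taking the supremum over $x,r$ and summing the two cases gives the claimed estimate.
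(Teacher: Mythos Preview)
Your proof is correct and follows essentially the same strategy as the paper. For part~(a) the paper makes the identical case split and the identical choice $c=f_{B(x,r)}$; the only difference is that it cites the corkscrew bound $|B(x,r)\cap B(\overline{x},s)|\geq c_{1}|B(x,r)|$ from \cite[Lemma~4.2]{BB3} rather than proving it. Your geodesic argument for that bound works in the unclamped range and when $t$ is clamped to $d(x,\overline{x})$, but when $t$ is clamped to $0$ (i.e.\ $r<s-d(x,\overline{x})$) the radius $\varepsilon=(r+s-d(x,\overline{x}))/4$ can exceed $r$, so $B(z,\varepsilon)\not\subset B(x,r)$; that case should instead be dismissed by observing $B(x,r)\subset B(\overline{x},s)$, which makes the corkscrew trivial. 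For part~(b) your argument is a mild simplification: the paper splits the integral over $B(x,r)$ into the pieces over $B(x,r)\cap B(\overline{x},R)$ and $B(x,r)\setminus B(\overline{x},R)$ using the constant $c=f_{B(x,r)\cap B(\overline{x},R)}$, whereas you split into the cases $B(x,r)\subset B(\overline{x},R)$ and $B(x,r)\not\subset B(\overline{x},R)$ and use $c=0$ in the second; both routes give the same bound.
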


\begin{proof}
(a) Pick $x\in B\left(  \overline{x},s\right)  $ and $r>0$ such that $B\left(
x,r\right)  \subset B\left(  \overline{x},3s\right)  $. Then, for a constant
$c$ to be chosen later,%
\begin{align*}
&  \frac{1}{\left\vert B\left(  x,r\right)  \cap B\left(  \overline
{x},s\right)  \right\vert }\int_{B\left(  x,r\right)  \cap B\left(
\overline{x},s\right)  }\left\vert f\left(  y\right)  -c\right\vert dy\\
&  \leq\frac{1}{c_{1}\left\vert B\left(  x,r\right)  \right\vert }%
\int_{B\left(  x,r\right)  }\left\vert f\left(  y\right)  -c\right\vert dy
\end{align*}
because
\[
\left\vert B\left(  x,r\right)  \cap B\left(  \overline{x},s\right)
\right\vert \geq c_{1}\left\vert B\left(  x,r\right)  \right\vert
\]
by the regularity of metric balls (see Lemma 4.2 in \cite{BB3}), since $r$ is
bounded (being $B\left(  x,r\right)  \subset B\left(  \overline{x},3s\right)
$). From the proof in \cite[Lemma 4.2]{BB3} one reads that $c_{1}$ is a
constant only depending on the doubling constant, that is on $G$. Choosing now
$c=f_{B\left(  x,r\right)  }$ we get%
\[
\frac{1}{\left\vert B\left(  x,r\right)  \cap B\left(  \overline{x},s\right)
\right\vert }\int_{B\left(  x,r\right)  \cap B\left(  \overline{x},s\right)
}\left\vert f\left(  y\right)  -c\right\vert \leq\frac{1}{c_{1}}\left[
f\right]  _{BMO_{loc}\left(  B\left(  \overline{x},s;3s\right)  \right)  }.
\]
If, on the other hand, $x\in B\left(  \overline{x},s\right)  $ but $B\left(
x,r\right)  \varsubsetneq B\left(  \overline{x},3s\right)  $, this means that
$B\left(  x,r\right)  \supset B\left(  \overline{x},s\right)  $, hence%
\begin{align*}
&  \frac{1}{\left\vert B\left(  x,r\right)  \cap B\left(  \overline
{x},s\right)  \right\vert }\int_{B\left(  x,r\right)  \cap B\left(
\overline{x},s\right)  }\left\vert f\left(  y\right)  -c\right\vert dy\\
&  =\frac{1}{\left\vert B\left(  \overline{x},s\right)  \right\vert }%
\int_{B\left(  \overline{x},s\right)  }\left\vert f\left(  y\right)
-c\right\vert dy\leq\left[  f\right]  _{BMO_{loc}\left(  B\left(  \overline
{x},s;3s\right)  \right)  }%
\end{align*}
(choosing $c=f_{B\left(  \overline{x},s\right)  }$).

(b) Pick $x\in B\left(  \overline{x},s\right)  $ and $r>0$ such that $B\left(
x,r\right)  \subset B\left(  \overline{x},3s\right)  $. Then%
\begin{align*}
&  \frac{1}{\left\vert B\left(  x,r\right)  \right\vert }\int_{B\left(
x,r\right)  }\left\vert \widetilde{f}\left(  y\right)  -c\right\vert dy\\
&  =\frac{1}{\left\vert B\left(  x,r\right)  \right\vert }\left\{
\int_{B\left(  x,r\right)  \cap B\left(  \overline{x},R\right)  }\left\vert
f\left(  y\right)  -c\right\vert dy+\int_{B\left(  x,r\right)  \setminus
B\left(  \overline{x},R\right)  }\left\vert c\right\vert dy\right\}
\end{align*}
choosing $c=f_{B\left(  x,r\right)  \cap B\left(  \overline{x},R\right)  }$%
\begin{align*}
&  =\frac{1}{\left\vert B\left(  x,r\right)  \right\vert }\left\{
\int_{B\left(  x,r\right)  \cap B\left(  \overline{x},R\right)  }\left\vert
f\left(  y\right)  -f_{B\left(  x,r\right)  \cap B\left(  \overline
{x},R\right)  }\right\vert dy+\left\vert f_{B\left(  x,r\right)  \cap B\left(
\overline{x},R\right)  }\right\vert \left\vert B\left(  x,r\right)  \setminus
B\left(  \overline{x},R\right)  \right\vert \right\} \\
&  \equiv I+II.
\end{align*}

Now,%
\begin{align*}
I  &  \leq\frac{1}{\left\vert B\left(  x,r\right)  \cap B\left(  \overline
{x},R\right)  \right\vert }\int_{B\left(  x,r\right)  \cap B\left(
\overline{x},R\right)  }\left\vert f\left(  y\right)  -f_{B\left(  x,r\right)
\cap B\left(  \overline{x},R\right)  }\right\vert dy\\
&  \leq\left[  f\right]  _{BMO\left(  B\left(  \overline{x},R\right)  \right)
}.
\end{align*}
On the other hand,%
\[
II=\left\vert f_{B\left(  x,r\right)  \cap B\left(  \overline{x},R\right)
}\right\vert \frac{\left\vert B\left(  x,r\right)  \setminus B\left(
\overline{x},R\right)  \right\vert }{\left\vert B\left(  x,r\right)
\right\vert }.
\]
The term $II$ does not vanish$\ $only if $B\left(  x,r\right)  $ contains a
point outside $B\left(  \overline{x},R\right)  $ (otherwise $\left\vert
B\left(  x,r\right)  \setminus B\left(  \overline{x},R\right)  \right\vert
=0$). Since $x\in B\left(  \overline{x},s\right)  $, this implies $r>R-s\ $and%
\[
\left\vert B\left(  x,r\right)  \cap B\left(  \overline{x},R\right)
\right\vert \geq c\left(  R-s\right)  ^{Q}.
\]
Then, by H\"{o}lder inequality%
\begin{align*}
II  &  \leq\left\vert f_{B\left(  x,r\right)  \cap B\left(  \overline
{x},R\right)  }\right\vert \leq\left(  \frac{1}{\left\vert B\left(
x,r\right)  \cap B\left(  \overline{x},R\right)  \right\vert }\int_{B\left(
x,r\right)  \cap B\left(  \overline{x},R\right)  }\left\vert f\left(
y\right)  \right\vert ^{p}dy\right)  ^{1/p}\\
&  \leq\frac{c}{\left(  R-s\right)  ^{Q/p}}\left\Vert f\right\Vert
_{L^{p}\left(  B\left(  \overline{x},R\right)  \right)  }%
\end{align*}

Then
\begin{equation}
\lbrack\widetilde{f}]_{BMO_{loc}\left(  B\left(  \overline{x},s;3s\right)
\right)  }\leq\lbrack f]_{BMO\left(  B\left(  \overline{x},R\right)  \right)
}+\frac{c}{\left(  R-s\right)  ^{Q/p}}\left\Vert f\right\Vert _{L^{p}\left(
B\left(  \overline{x},s\right)  \right)  } \label{stimaseminorma}%
\end{equation}
from which (b) follows.
\end{proof}

\begin{definition}
[Sobolev and BMO Sobolev spaces]We say that $u\in S^{2,p}\left(
\Omega\right)  $ if%
\[
\left\Vert u\right\Vert _{S^{2,p}\left(  \Omega\right)  }\equiv\sum
_{i,j=1}^{q}\left\Vert X_{i}X_{j}u\right\Vert _{L^{p}\left(  \Omega\right)
}+\sum_{i=1}^{q}\left\Vert X_{i}u\right\Vert _{L^{p}\left(  \Omega\right)
}+\left\Vert u\right\Vert _{L^{p}\left(  \Omega\right)  }<\infty,
\]
where the derivatives $X_{i}u$ are defined in the usual weak (distributional) sense.

We say that $u\in S_{loc}^{2,p,\ast}\left(  \Omega\right)  $ if%
\begin{align*}
&  \left\Vert u\right\Vert _{S_{loc}^{2,p,\ast}\left(  \Omega\right)  }%
\equiv\\
&  \equiv\sum_{i,j=1}^{q}\left\Vert X_{i}X_{j}u\right\Vert _{BMO_{loc}%
^{p}\left(  \Omega\right)  }+\sum_{i=1}^{q}\left\Vert X_{i}u\right\Vert
_{BMO_{loc}^{p}\left(  \Omega\right)  }+\left\Vert u\right\Vert _{BMO_{loc}%
^{p}\left(  \Omega\right)  }<\infty.
\end{align*}

We say that $u\in S^{2,p,\ast}\left(  \Omega\right)  $ if%
\begin{align*}
&  \left\Vert u\right\Vert _{S^{2,p,\ast}\left(  \Omega\right)  }\equiv\\
&  \equiv\sum_{i,j=1}^{q}\left\Vert X_{i}X_{j}u\right\Vert _{BMO^{p}\left(
\Omega\right)  }+\sum_{i=1}^{q}\left\Vert X_{i}u\right\Vert _{BMO^{p}\left(
\Omega\right)  }+\left\Vert u\right\Vert _{BMO^{p}\left(  \Omega\right)
}<\infty.
\end{align*}
Analogously, for $\Omega_{1}\Subset\Omega_{2}\subseteq\Omega$, we can define
the spaces $S_{loc}^{2,p,\ast}\left(  \Omega_{1},\Omega_{2}\right)  $,
replacing $BMO^{p}\left(  \Omega\right)  $ norms with $BMO_{loc}^{p}\left(
\Omega_{1},\Omega_{2}\right)  $.
\end{definition}

\subsection{Assumptions and main results\label{sec main result}}

We now keep the assumptions stated at beginning of
\S \ \ref{subsec function spaces} about the vector fields $X_{1}%
,X_{2},...,X_{q}$, the domain $\Omega\subset\mathbb{R}^{N}$ and the distance
$d$. Let us consider operators of the form
\begin{equation}
Lu\equiv\sum_{i,j=1}^{q}a_{ij}\left(  x\right)  X_{i}X_{j}u \label{L}%
\end{equation}
where $a_{ij}=a_{ji}$ satisfy the \textquotedblleft ellipticity
condition\textquotedblright:
\[
\Lambda|\xi|^{2}\leq\sum_{i,j=1}^{q}a_{ij}\left(  x\right)  \xi_{i}\xi_{j}%
\leq\Lambda^{-1}|\xi|^{2}\text{ }\forall\xi\in{\mathbb{R}}^{q},x\in\Omega.
\]
Moreover%
\[
a_{ij}\in VLMO_{loc}\left(  \Omega\right)  \cap L^{\infty}\left(
\Omega\right)  .
\]
We also assume that the homogeneous dimension (see
\S \ \ref{subsec homogeneous groups}) is $Q\geq3$. This fact (necessary to
apply Folland's results in \cite{fo}) simply rules out the case of uniformly
elliptic equations in two variables. We stress the fact that, instead,
uniformly elliptic operators in $n\geq3$ variables \emph{are }covered by the
present theory. Our main result is the following:

\begin{theorem}
\label{Thm main}Under the above assumptions, for any $\Omega_{1}\Subset
\Omega_{2}\Subset\Omega$, $1<p<\infty$ we have%
\[
\left\Vert u\right\Vert _{S_{loc}^{2,p,\ast}\left(  \Omega_{1},\Omega
_{2}\right)  }\leq c\left\{  \left\Vert Lu\right\Vert _{BMO_{loc}^{p}\left(
\Omega_{2},\Omega\right)  }+\left\Vert u\right\Vert _{BMO_{loc}^{p}\left(
\Omega_{2},\Omega\right)  }\right\}
\]
for any $u\in S_{loc}^{2,p,\ast}\left(  \Omega\right)  $. The constant $c$
only depends on the group $G$, the numbers $p$ and $\Lambda$, the $VLMO_{loc}$
moduli of the coefficients, $\Omega_{1},\Omega_{2}$. The previous inequality
also implies the following%
\[
\left\Vert u\right\Vert _{S^{2,p,\ast}\left(  \Omega_{1}\right)  }\leq
c\left\{  \left\Vert Lu\right\Vert _{BMO_{loc}^{p}\left(  \Omega\right)
}+\left\Vert u\right\Vert _{BMO_{loc}^{p}\left(  \Omega\right)  }\right\}  .
\]

\end{theorem}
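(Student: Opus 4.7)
The plan is to follow the classical ``freezing technique'' introduced by Chiarenza--Frasca--Longo, adapted to the present $BMO$ context with $VLMO$ coefficients, along the three steps announced at the end of the introduction.

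\textbf{Step 1 (small compact support).} Fix $\overline{x} \in \Omega_1$ and a small radius $r_0$. Assume first that $u \in S^{2,p,*}_{loc}(\Omega)$ has $\operatorname{supp} u \subset B(\overline{x}, r)$ for some $r \leq r_0$. Freezing the coefficients at $\overline{x}$, consider the operator $L_0 = \sum a_{ij}(\overline{x}) X_i X_j$ and use Folland's fundamental solution $\Gamma(\overline{x}, \cdot)$ to get, for $i,j = 1,\ldots,q$, a representation of the form
\[
X_i X_j u(x) = T_{ij}(Lu)(x) + \sum_{h,k} [T_{ij}, (a_{hk}(\overline{x}) - a_{hk}(\cdot))]\,X_h X_k u(x) + c_{ij}(\overline{x}) Lu(x),
\]
where $T_{ij}$ is a variable-kernel singular integral associated with $X_iX_j\Gamma(\overline{x}, \cdot)$. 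Expanding $\Gamma(\overline{x}, \cdot)$ in spherical harmonics and invoking the uniform bound (\ref{unif BF}) (established in Section~\ref{sec uniform bound}) reduces each $T_{ij}$ to a locally uniformly convergent series of convolution-type singular integrals on $G$ whose coefficient functions are controlled in $L^\infty \cap LMO$ with respect to the frozen point. Applying the $BMO^p_{loc}$ estimates for singular integrals and their commutators from Section~4 yields, via (\ref{eta f}),
\[
\sum_{i,j}\|X_i X_j u\|_{BMO^p_{loc}(B(\overline{x}, r;3r))} \leq c\|Lu\|_{BMO^p_{loc}(B(\overline{x}, r;3r))} + c\,\eta_A(2r_0) \sum_{h,k}\|X_hX_k u\|_{BMO^p_{loc}(B(\overline{x}, r;3r))},
\]
where $\eta_A$ denotes the joint $VLMO$ modulus of the matrix $A=\{a_{ij}\}$. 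Choosing $r_0$ so small that $c\,\eta_A(2r_0) < 1/2$, the commutator term is absorbed on the left. The first-order term $\sum_i\|X_iu\|_{BMO^p_{loc}}$ is then controlled by an analogous (simpler) representation formula involving $X_i\Gamma(\overline{x}, \cdot)$, completing the estimate for functions of small compact support.

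\textbf{Step 2 (small noncompact support).} For a general $u \in S^{2,p,*}_{loc}(\Omega)$ and a pair of concentric balls $B(\overline{x}, R_1) \Subset B(\overline{x}, R_2)$ with $R_2 \leq r_0$, apply Step~1 to $\eta u$ where $\eta \in C_0^\infty(B(\overline{x}, R_2))$ equals $1$ on $B(\overline{x}, R_1)$ and has derivative bounds of order $(R_2-R_1)^{-1}$. The commutator error $L(\eta u) - \eta L u$ involves $(X_i\eta)(X_j u)$ and $(X_iX_j\eta) u$ and is handled by interpolation estimates on $BMO^p_{loc}$, transferring between compactly supported and local norms via Lemma~\ref{Lem KR} and Proposition~\ref{Prop norme locali globali}. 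A standard Caccioppoli-type iteration on an increasing chain of radii $R_1 < R < R' < R_2$, in the spirit of \cite{BB3}, absorbs the loss of derivatives from the cutoff and produces the desired inequality on the annular pair $B(\overline{x}, R_1; R_2)$.

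\textbf{Step 3 (general domain).} Given $\Omega_1 \Subset \Omega_2 \Subset \Omega$, cover the compact set $\overline{\Omega_1}$ by finitely many balls $B(\overline{x}_k, r_0/3)$ with $\overline{x}_k \in \Omega_1$ and $B(\overline{x}_k, r_0) \subset \Omega_2$, where $r_0$ is fixed in Step~1. Summing the Step~2 estimate over $k$ and using the definition of the $BMO^p_{loc}(\Omega_1, \Omega_2)$ seminorm (together with the fact that any ball $B(x,r) \subset \Omega_2$ centered at $x \in \Omega_1$ is either contained in one of the $B(\overline{x}_k, r_0)$ or is itself comparable to $\Omega_2$, in which case it is handled directly) gives the first stated inequality. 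The second inequality then follows by applying Proposition~\ref{Prop norme locali globali}(a) to each of $X_iX_ju$, $X_iu$, $u$ over concentric balls contained in $\Omega_1$, and a finite covering of $\Omega_1$ by such balls.

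The main obstacle is Step~1: obtaining the commutator estimate on the local, bounded-domain scale $BMO^p_{loc}(\Omega_1,\Omega_2)$, where the Sun--Su type smallness via the $LMO$ seminorm of the coefficients is the only mechanism that lets us beat $1/2$ and absorb the second-derivative term. Two ingredients are indispensable and genuinely hard: the real-variable machinery of Section~4, which localizes the convolution-type commutator estimate to the annular spaces without requiring any boundary regularity of $\Omega$, and the uniform bound (\ref{unif BF}) of the Appendix, which is what upgrades the spherical-harmonic coefficients from $L^\infty$ to the $LMO$ class needed in the commutator estimate.
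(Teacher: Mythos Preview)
Your three-step outline matches the paper's strategy, and you have correctly identified the two decisive ingredients (the local commutator machinery of Section~4 and the uniform bound (\ref{unif BF})). However, Step~1 as written contains a real slip that would break the absorption argument.

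You freeze at the \emph{fixed} center $\overline{x}$ and then write the error term as a commutator $[T_{ij},\,a_{hk}(\overline{x})-a_{hk}(\cdot)]\,X_hX_ku$. But what the representation formula actually produces is $T_{ij}\bigl[(a_{hk}(\overline{x})-a_{hk})X_hX_ku\bigr]$, and this differs from the commutator by the multiplication term $(a_{hk}(\overline{x})-a_{hk})\,T_{ij}(X_hX_ku)$. In $BMO$ that extra piece is controlled, via Theorem~\ref{Thm multiplication}, by $\bigl(\|a_{hk}(\overline{x})-a_{hk}\|_{L^\infty}+[a_{hk}]_{LMO}\bigr)\|T_{ij}(X_hX_ku)\|_{BMO^p_{loc}}$, and the $L^\infty$ factor is \emph{not} small under the mere $VLMO$ assumption (the coefficients may be discontinuous). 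So you cannot absorb. The paper avoids this by freezing at the \emph{variable} point $x_0$ and then setting $x_0=x$, which yields the representation (\ref{rep formula astratta}) with a pure commutator $C[K_{ij},a_{hk}]$ and a genuinely variable kernel $K_{ij}$; this is precisely why the spherical-harmonics expansion and the bound (\ref{unif BF}) are needed (to control the $LMO$ seminorm of the variable coefficients $c_{ij}^{km}(x)$ and $\alpha_{ij}(x)$ that now appear). Your later references to ``variable-kernel'' $T_{ij}$ and to (\ref{unif BF}) suggest you may have intended this, but with $\overline{x}$ fixed the argument as written does not close.

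A minor point: in the paper the first-order derivatives are not handled by a separate representation via $X_i\Gamma$ (which would be a fractional, not singular, integral) but by the $BMO^p$ interpolation inequality of \cite[Thm.~4.15]{BB3} combined with the iteration lemma \cite[Lemma~4.14]{BB3}; this is how Step~2 (Theorem~\ref{Thm noncomact}) is carried out.
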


We will also prove a similar local estimate stated for \textquotedblleft
standard\textquotedblright\ $BMO^{p}$ spaces, see Theorem
\ref{Thm main variation}.

\section{Representation formulas and reduction to singular integrals of
convolution type}

In order to state suitable representation formulas for the operator $L$ in
(\ref{L}), we proceed as follows. For any $x_{0}\in\Omega$, let us
\textquotedblleft freeze\textquotedblright\ at $x_{0}$ the coefficients
$a_{ij}(x)$, and consider
\[
L_{0}=\sum_{i,j=1}^{q}a_{ij}(x_{0})X_{i}X_{j}.
\]
As shown in \cite{bb1}, this operator can be rewritten as a sum of squares of
H\"{o}r\-man\-der's vector fields; in particular, it is hypoelliptic.
Moreover, $L_{0}$ is left invariant, homogeneous of degree 2, and coincides
with its formal transpose; hence Folland's theory in \cite{fo} applies, and
assures the existence of a $\left(  2-Q\right)  $-homogeneous fundamental
solution, smooth outside the pole. Let us denote it by $\Gamma\left(
x_{0};\cdot\right)  $, to indicate its dependence on the frozen coefficients
$a_{ij}\left(  x_{0}\right)  $. Also, set for $i,j=1,\ldots,q$,
\[
\Gamma_{ij}\left(  x_{0};u\right)  =X_{i}X_{j}\,\left[  \Gamma\left(
x_{0};\cdot\right)  \right]  \left(  u\right)  .
\]
The next theorem summarizes the basic properties of $\Gamma\left(  x_{0}%
;\cdot\right)  $ that we will need in the following. All of them are due to
Folland \cite[Thm. 2.1 and Corollary 2.8]{fo} or Folland-Stein
\cite[Proposition 8.5]{fs} (see also \cite{bb1}).

\begin{theorem}
\label{ith:fundsol:lzero}Assume that the homogeneous dimension of $G$ is
$Q\geq3$. For every $x_{0}\in\Omega$ the operator $L_{0}$ has a unique
fundamental solution $\Gamma\left(  x_{0};\cdot\right)  $ such that:

$(a)$ $\Gamma\left(  x_{0};\cdot\right)  \in C^{\infty}\left(  \mathbb{R}%
^{N}\setminus\left\{  0\right\}  \right)  ;$

$(b)$ $\Gamma\left(  x_{0};\cdot\right)  $ is homogeneous of degree $(2-Q);$

$(c)$ for every test function $f$ and every $v\in\mathbb{R}^{N}$,
\[
f(v)\text{ }=\int_{\mathbb{R}^{N}}\text{ }\Gamma\left(  x_{0}\,;\,u^{-1}\circ
v\right)  \,L_{0}f(u)\,du;
\]
moreover, for every $i,j=1,\ldots,q$, there exist constants $\alpha_{ij}%
(x_{0})$ such that%
\begin{equation}
X_{i}X_{j}\,f(v)\text{ }=\text{ }P.V.\int_{\mathbb{R}^{N}}\text{ }\Gamma
_{ij}\left(  x_{0};\,u^{-1}\circ v\right)  \,L_{0}f(u)du\text{ }+\alpha
_{ij}(x_{0})\cdot L_{0}f(v); \label{rep formula L0}%
\end{equation}

$(d)$ $\Gamma_{ij}\left(  x_{0}\,;\cdot\right)  \in C^{\infty}\left(
\mathbb{R}^{N}\setminus\left\{  0\right\}  \right)  ;$

$(e)$ $\Gamma_{ij}\left(  x_{0}\,;\cdot\right)  $ is homogeneous of degree
$-Q;$

$(f)$ for every $R>r>0$,%
\[
\int_{r<d(0,x)<R}\text{ }\Gamma_{ij}\left(  x_{0};\,x\right)  \,dx=\int
_{d(0,x)=1}\text{ }\Gamma_{ij}\left(  x_{0};\,x\right)  \,d\sigma(x)=0.
\]

\end{theorem}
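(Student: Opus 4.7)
The plan is essentially bookkeeping: I would verify that $L_0$ fits exactly into Folland's framework \cite{fo} and then quote the corresponding results, which are also systematically presented in \cite{bb1}. The work in the proof is all in checking hypotheses rather than in producing anything new.

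The first step is to recognize $L_0$ as a sum of squares of left-invariant homogeneous H\"ormander vector fields on $G$. Since the symmetric matrix $\{a_{ij}(x_0)\}_{i,j=1}^{q}$ is uniformly positive definite, I factor it as $A(x_0) = C(x_0)^T C(x_0)$ for some $q\times q$ constant matrix $C(x_0)$. Setting $Y_k = \sum_{i=1}^{q} c_{ki}(x_0) X_i$ for $k=1,\ldots,q$, we get $L_0 = \sum_{k=1}^{q} Y_k^{2}$. Each $Y_k$ is a left-invariant vector field on $G$, homogeneous of degree $1$ (a constant-coefficient linear combination of the $X_i$'s from $V_1$). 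The family $\{Y_1,\ldots,Y_q\}$ spans $V_1$ because $C(x_0)$ is nonsingular, so $\{Y_k\}$ still satisfies H\"ormander's condition at step $s$. Therefore $L_0$ is left-invariant, homogeneous of degree $2$, hypoelliptic by H\"ormander's theorem, and self-adjoint (being a sum of squares of real vector fields).

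With these properties, Folland's Theorem 2.1 and Corollary 2.8 in \cite{fo} immediately apply and yield a unique fundamental solution $\Gamma(x_0;\cdot)$ that is smooth away from the origin and homogeneous of degree $2-Q$; this gives (a), (b), and the first representation formula in (c). The principal-value identity in (c), with the extra local term $\alpha_{ij}(x_0)\cdot L_0 f(v)$, is the content of Proposition 8.5 in \cite{fs}: one differentiates the representation formula $f(v) = \int \Gamma(x_0;u^{-1}\circ v) L_0 f(u)\, du$ under the integral sign twice, discovering that the second derivative of the convolution kernel becomes only conditionally integrable near the pole, which forces the principal value and produces the constant $\alpha_{ij}(x_0)$ from the local residue. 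Properties (d) and (e) then follow directly by observing that each $X_i$ is homogeneous of degree $1$ and preserves smoothness away from the pole: differentiating $\Gamma(x_0;\cdot)$ twice lowers the degree of homogeneity from $2-Q$ to $-Q$.

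Property (f) is a standard consequence of $(-Q)$-homogeneity and smoothness of $\Gamma_{ij}(x_0;\cdot)$ off the origin. Using dilation coordinates $x = D(\lambda)y$ with $d(0,y)=1$ and the Jacobian $\lambda^{Q-1}d\lambda\, d\sigma(y)$ together with $\Gamma_{ij}(x_0;D(\lambda)y) = \lambda^{-Q}\Gamma_{ij}(x_0;y)$, the annular integral factors as $(\log(R/r))\cdot\int_{d(0,x)=1}\Gamma_{ij}(x_0;x)\,d\sigma(x)$; the vanishing of the spherical integral is the cancellation property proved in \cite[Proposition 1.15]{fs} for homogeneous distributions arising as second derivatives of kernels of negative homogeneity of degree $2-Q$. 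The only real subtlety anywhere in the proof is deriving the $\alpha_{ij}(x_0)$ correction term in (\ref{rep formula L0}), and since this is already done in \cite{fs} and \cite{bb1}, the entire proof reduces to a careful citation and to verifying that the reduction $L_0 = \sum Y_k^2$ preserves the hypotheses of those references.
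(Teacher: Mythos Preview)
Your proposal is correct and matches the paper's own treatment: the paper does not give an independent proof of this theorem but simply attributes all the assertions to Folland \cite[Thm.~2.1 and Corollary~2.8]{fo} and Folland--Stein \cite[Proposition~8.5]{fs}, after noting (as you do) that $L_0$ can be rewritten as a sum of squares of H\"ormander vector fields. Your write-up fleshes out the citations with a bit more detail on the sum-of-squares reduction and the homogeneity computations, but the approach is the same.
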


Here and in the following,%
\[
P.V.\int\left(  ...\right)  dy=\lim_{\varepsilon\rightarrow0}\int_{d\left(
x,y\right)  >\varepsilon}\left(  ...\right)  dy=\lim_{\varepsilon\rightarrow
0}\int_{\rho\left(  x,y\right)  >\varepsilon}\left(  ...\right)  dy.
\]

The cancellation properties stated at point (f) still hold with $d(0,x)$
replaced with $\left\Vert x\right\Vert $.

A second fundamental result we need contains a bound on the derivatives of
$\Gamma$, uniform with respect to $x_{0}$, and is proved in \cite[Thm.
12]{bb1}:

\begin{theorem}
\label{Thm unif BB1}For every multi-index $\beta$, there exists a constant
$c=c(\beta,G,\Lambda)$ such that
\[
\underset{x\in\Omega}{\underset{\left\Vert u\right\Vert =1}{{\sup}}%
}\,\left\vert \left(  \frac{\partial}{\partial u}\right)  ^{\beta}\Gamma
_{ij}\left(  x\,;\,u\right)  \right\vert \,\leq\text{ }c\text{,}%
\]
for any $i,j=1,\ldots,q$; moreover, for the $\alpha_{ij}$'s appearing in
(\ref{rep formula L0}), the uniform bound
\begin{equation}
\underset{x\in\Omega}{{\sup}}\left\vert \alpha_{ij}(x)\right\vert \leq c_{2}
\label{unif bound alfa_ij}%
\end{equation}
holds for some constant $c_{2}=c_{2}\left(  G,\Lambda\right)  $.
\end{theorem}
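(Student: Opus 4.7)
The plan is to reduce the frozen operator $L_0$ to a canonical sub-Laplacian by a group automorphism, and then combine the smoothness of Folland's fundamental solution of the canonical operator with the compactness of the set of admissible matrices $A(x_0)=\{a_{ij}(x_0)\}$.

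First, for each $x_0\in\Omega$ I would write $A(x_0)=C(x_0)C(x_0)^T$ via, say, the positive square root, so that $A(x_0)$ elliptic with constant $\Lambda$ forces $C(x_0)$ to range in a compact subset of $GL(q,\mathbb{R})$. Setting $Y_k=\sum_i c_{ki}(x_0)X_i$ gives $L_0=\sum_{k=1}^q Y_k^2$. Because $V_1=\mathrm{span}(X_1,\ldots,X_q)$ generates the stratified Lie algebra $\ell$, the linear isomorphism $C(x_0)$ on $V_1$ extends uniquely to a graded Lie algebra automorphism of $\ell$, and then via the exponential map to a Lie group automorphism $T_{x_0}$ of $G$ that commutes with the dilations $D(\lambda)$. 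Under $T_{x_0}$, the operator $L_0$ is conjugated to the canonical sub-Laplacian $\widetilde L=\sum_{i=1}^q X_i^2$, and Folland's theorem furnishes a unique $(2-Q)$-homogeneous fundamental solution $\widetilde\Gamma\in C^\infty(\mathbb{R}^N\setminus\{0\})$ for $\widetilde L$.

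Next, by the change-of-variables formula for fundamental solutions one obtains a representation of the shape
\[
\Gamma(x_0;u)=|\det T_{x_0}|^{-1}\,\widetilde\Gamma\bigl(T_{x_0}^{-1}u\bigr),
\]
from which any Euclidean derivative $(\partial/\partial u)^\beta\Gamma_{ij}(x_0;u)$ is a polynomial in the entries of $T_{x_0}$ and $T_{x_0}^{-1}$, multiplied by derivatives of $\widetilde\Gamma$ of order at most $|\beta|+2$ evaluated at $T_{x_0}^{-1}(u)$. Since $x_0\mapsto T_{x_0}$ takes values in a compact set of graded automorphisms, the image of the Euclidean unit sphere $\{\|u\|=1\}$ under $T_{x_0}^{-1}$ is contained in a fixed compact annulus away from the origin uniformly in $x_0$. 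Smoothness of $\widetilde\Gamma$ on that annulus then gives the required uniform bound on $(\partial/\partial u)^\beta\Gamma_{ij}(x_0;u)$.

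For the constants $\alpha_{ij}(x_0)$, I would use that they are uniquely determined by the distributional identity $X_iX_j\Gamma(x_0;\cdot)=\mathrm{p.v.}\,\Gamma_{ij}(x_0;\cdot)+\alpha_{ij}(x_0)\delta_0$, so they can be read off as the difference between $X_iX_j$ applied to a fixed radial cutoff times $\Gamma(x_0;\cdot)$ and the corresponding principal-value integral. Both pieces depend continuously on $x_0$ through $A(x_0)$, so boundedness over the compact set of admissible matrices follows. The main obstacle I expect is verifying that the linear map $C(x_0)$ on $V_1$ really does extend to a graded group automorphism of $G$ with joint continuity in $x_0$, and then checking that Euclidean derivatives of $\Gamma(x_0;u)$, rather than only the vector-field derivatives $X_iX_j$, can actually be controlled through this transformation; this requires using the homogeneity of the $X_i$'s together with the smooth structure of $T_{x_0}^{-1}$ on the annulus.
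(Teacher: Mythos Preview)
The paper does not give its own proof of this statement; it is quoted from \cite[Thm.~12]{bb1}, so there is no in-paper argument to compare against directly. That said, your proposal contains a genuine gap, which you yourself flag as ``the main obstacle'': the claim that an arbitrary linear isomorphism $C(x_0)$ of $V_1$ extends to a graded Lie algebra automorphism of $\ell$ is false for a general Carnot group. Such an extension exists if and only if the linear map on $V_1$ respects all the bracket relations among the generators, and this holds automatically only when $\ell$ is the \emph{free} nilpotent Lie algebra of step $s$ on $q$ generators. A concrete obstruction: take $G=\mathbb{H}^1\times\mathbb{R}$, so $V_1=\mathrm{span}(X_1,X_2,X_3)$ with $[X_1,X_2]=T$ the only nontrivial bracket; the symmetric positive matrix $C$ with rows $(2,0,1),(0,1,0),(1,0,1)$ gives $[CX_2,CX_3]=[X_2,X_1+X_3]=-T\neq 0$, whereas any Lie algebra homomorphism must send $[X_2,X_3]=0$ to $0$. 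Thus no automorphism $T_{x_0}$ with the required property exists for this $C$, and the representation $\Gamma(x_0;u)=|\det T_{x_0}|^{-1}\widetilde\Gamma(T_{x_0}^{-1}u)$ is unavailable.

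The paper's Appendix makes this structural point explicit in the closely related proof of Theorem~\ref{Thm unif BF}: the automorphism $T_A$ of \cite{BU} is constructed only under the hypothesis that $G$ is free (Step~1 there), and the passage to an arbitrary Carnot group requires a separate lifting argument to a free group (Step~3). Your outline would go through essentially verbatim for free $G$ and could then be completed via that lifting, but as written it does not cover the general stratified group assumed in this paper. The treatment of $\alpha_{ij}(x_0)$ inherits the same issue, since the continuous dependence of $\Gamma(x_0;\cdot)$ on $x_0$ is precisely what is at stake; once the first bound is in hand, the formula in Lemma~\ref{Prop alfa i j} gives the $\alpha_{ij}$ bound immediately.
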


The above theorem will be useful but not sufficient for our aims. In \S \ 6 we
will also prove the following uniform bound:

\begin{theorem}
\label{Thm unif BF}For any nonnegative integer $p,$ there exists a constant
$c_{\Lambda,p}$ such that for any $x_{1},x_{2},y\in\mathbb{R}^{N}$ we have:
\begin{align}
&  \left\vert X_{i_{1}}X_{i_{2}}...X_{i_{p}}\Gamma\left(  x_{1},y\right)
-X_{i_{1}}X_{i_{2}}...X_{i_{p}}\Gamma\left(  x_{2},y\right)  \right\vert
\label{uniform 6}\\
&  \leq c_{\Lambda,p}\left\Vert A\left(  x_{1}\right)  -A\left(  x_{2}\right)
\right\Vert \left\Vert y\right\Vert ^{2-Q-p}\nonumber
\end{align}
where the differential operators $X_{i_{j}}$ ($i_{j}\in\left\{
1,2,...,q\right\}  $) act on the $y$-variable and $A=\left\{  a_{ij}\right\}
_{i,j=1}^{q}$.
\end{theorem}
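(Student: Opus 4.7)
The plan is, by homogeneity, to reduce the statement to the unit sphere and then transfer to $\Gamma$ a Lipschitz-in-coefficients estimate proved at the heat-kernel level, using the Gaussian-bound machinery of Bonfiglioli-Lanconelli-Uguzzoni.

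\emph{Homogeneity reduction.} Each derivative $X_{i_1}\cdots X_{i_p}\Gamma(x,\cdot)$ is $(2-Q-p)$-homogeneous because $\Gamma(x,\cdot)$ is $(2-Q)$-homogeneous and each $X_i$ is $1$-homogeneous with respect to the dilations. Writing $y = D(\|y\|)\eta$ with $\|\eta\| = 1$ pulls out a factor $\|y\|^{2-Q-p}$ on both sides of (\ref{uniform 6}), so it suffices to prove
\[
\bigl|X_{i_1}\cdots X_{i_p}\Gamma(x_1,\eta) - X_{i_1}\cdots X_{i_p}\Gamma(x_2,\eta)\bigr| \le c_{\Lambda,p}\,\|A(x_1) - A(x_2)\|
\]
uniformly for $\eta$ on the (compact) homogeneous unit sphere.

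\emph{Heat-kernel route.} Let $h(x_0;t,y)$ denote the fundamental solution of $\partial_t - L_{x_0}$. From Folland's theory together with \cite{BU}, \cite{BLU1}, \cite{BLU2} one has, uniformly in $x_0$, the Gaussian-type bounds
\[
|X_{i_1}\cdots X_{i_p} h(x_0;t,y)| \le c_p\, t^{-(Q+p)/2}\exp(-c\|y\|^2/t),
\]
and, since $L_{x_0}$ has no drift and $Q\ge 3$, the elliptic fundamental solution admits the representation
\[
\Gamma(x_0,y) = \int_0^{\infty} h(x_0;t,y)\, dt,
\]
which may be differentiated in $y$ under the integral sign. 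The remaining task is the Lipschitz-in-$A$ analogue
\[
|X_{i_1}\cdots X_{i_p} h(x_1;t,y) - X_{i_1}\cdots X_{i_p} h(x_2;t,y)| \le c'_p \|A(x_1) - A(x_2)\|\, t^{-(Q+p)/2} e^{-c\|y\|^2/t},
\]
after which integrating in $t\in(0,\infty)$ (convergent on $\|y\|=1$ because of the exponential near $0$ and of $Q\ge 3$ near $\infty$) and combining with the homogeneity reduction yields (\ref{uniform 6}). To prove the Lipschitz estimate I would introduce the convex interpolation $A_s = (1-s)A(x_1) + sA(x_2)$, which remains uniformly elliptic in $s\in[0,1]$, and let $h_s$ be the corresponding heat kernel. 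Differentiating $(\partial_t - L_s) h_s = 0$ in $s$ produces $(\partial_t - L_s)\partial_s h_s = \sum_{ij} (\dot A_s)_{ij} X_i X_j h_s$, so Duhamel's principle combined with the Gaussian bound on $X_i X_j h_s$ yields the desired bound after integrating in $s$ and absorbing $\|A(x_1)-A(x_2)\|$.

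\emph{Main obstacle.} The delicate step is the Lipschitz-in-matrix-norm bound on $X_{i_1}\cdots X_{i_p} h(\cdot;t,y)$ for \emph{every} $p$: \cite{BU}, \cite{BLU1}, \cite{BLU2} supply uniform Gaussian bounds and some continuity in the coefficients, but extracting a sharp Lipschitz dependence with the correct time-weight for all orders of $y$-derivatives requires revisiting their Levi parametrix construction and carefully tracking how each term in the iteration scales with $\|A(x_1) - A(x_2)\|$. This is also the reason why an operator with drift $X_0$ cannot be accommodated for free in the present paper: the corresponding Gaussian bounds needed to start the argument are not available in that setting.
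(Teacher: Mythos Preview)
Your overall architecture matches the paper's: reduce by homogeneity, pass to the heat kernel $h_A$, prove a Lipschitz-in-$A$ Gaussian estimate for $X_{i_1}\cdots X_{i_p}h_A$, then integrate in $t$. The divergence is in how that Lipschitz estimate is obtained, and your Duhamel sketch has a real gap. Feeding the uniform Gaussian bound $|X_iX_jh_s(\tau,y)|\le c\,\tau^{-(Q+2)/2}e^{-c\|y\|^2/\tau}$ into the Duhamel representation of $\partial_s h_s$ and using only the Chapman--Kolmogorov convolution estimate leaves an integrand of size $\tau^{-1}t^{-Q/2}e^{-c\|y\|^2/t}$, whose $\tau$-integral diverges at $0$. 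This is repairable (split the integral at $\tau=t/2$ and on each half move the two $X$-derivatives onto the factor carrying the large time variable, using $X_i^*=-X_i$ together with the relation between left- and right-invariant fields under $y\mapsto y^{-1}\circ x$), but it is the crux of the matter and your text does not address it. The reference to a Levi parametrix ``iteration'' is also off target: the coefficients here are constant in space, so no iteration is involved.

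The paper avoids Duhamel altogether. On a \emph{free} Carnot group one has the explicit formula $h_A(y,t)=J_A\,h_G(T_A(y),t)$ from \cite{BU}, where $h_G$ is the fixed sublaplacian heat kernel and $J_A,T_A$ depend polynomially on $A$. Lipschitz dependence on $A$ then follows from $|J_{A_1}-J_{A_2}|\le c\|A_1-A_2\|$, from $|T_{A_1}(x)-T_{A_2}(x)|\le c\|A_1-A_2\|$ for $\|x\|=1$, and from the ordinary Euclidean mean-value theorem applied to $h_G$, with a short bootstrap (using the already-known $\|A_1-A_2\|^{1/s}$ bound of \cite{BLU1}) when $T_{A_1}(x)$ and $T_{A_2}(x)$ are far apart. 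Higher $X$-derivatives and the passage from free to general Carnot groups are then read off verbatim from \cite{BLU1}. This route is more elementary than a corrected Duhamel argument, at the price of invoking the free-group structure plus lifting.
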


Here $\left\Vert y\right\Vert $ is the homogeneous norm in $G$, while
$\left\Vert A\left(  x_{1}\right)  -A\left(  x_{2}\right)  \right\Vert $
denotes the usual matrix norm in $\mathbb{R}^{2q}$.

By the representation formula (\ref{rep formula L0}), writing $L_{0}=L+\left(
L_{0}-L\right)  \ $and then letting $x$ be equal to $x_{0}$, we get the following:

\begin{theorem}
Let $u\in C_{0}^{\infty}\left(  \Omega\right)  $. Then, for $i,j=1,\ldots,q$
and every $x\in\Omega$%
\begin{align}
X_{i}X_{j}u\left(  x\right)   &  =P.V.\int_{\Omega}\Gamma_{ij}\left(
x;y^{-1}\circ x\right)  \left\{  \sum_{h,k=1}^{q}\left[  a_{hk}(x)-a_{hk}%
(y)\right]  \,X_{h}X_{k}\,u(y)\right.  +\label{rep formula}\\
&  \left.  +Lu(y)\frac{{}}{{}}\right\}  dy+\alpha_{ij}(x)\cdot Lu(x)
.\nonumber
\end{align}
The previous formula still holds, for a.e. $x$, if $u=v\phi$ with $v\in
S^{2,p}\left(  \Omega\right)  $ and $\phi\in C_{0}^{\infty}\left(
\Omega\right)  $.
\end{theorem}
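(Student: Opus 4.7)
My plan is to combine Folland's representation formula \eqref{rep formula L0} for the frozen operator $L_0$ with the algebraic splitting $L_0 = L + (L_0-L)$, and then to extend the identity from $C_0^\infty(\Omega)$ to products $v\phi$ by approximation.

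For $u\in C_0^\infty(\Omega)$, I would fix $x\in\Omega$ and apply \eqref{rep formula L0} at the evaluation point $x$, choosing the freezing point to be $x$ itself. The decomposition
\[
L_0 u(y) = Lu(y) + \sum_{h,k=1}^{q}\bigl[a_{hk}(x)-a_{hk}(y)\bigr]\,X_hX_k u(y)
\]
substituted into \eqref{rep formula L0} produces the integrand of \eqref{rep formula}, while the boundary term $\alpha_{ij}(x)L_0 u(x)$ collapses to $\alpha_{ij}(x)Lu(x)$ because $a_{hk}(x)-a_{hk}(x)=0$. The integration reduces from $\mathbb{R}^N$ to $\Omega$ since $u$ is supported there.

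For $u=v\phi$ with $v\in S^{2,p}(\Omega)$ and $\phi\in C_0^\infty(\Omega)$, I would argue by density. Pick an intermediate open set $\Omega'$ with $\mathrm{supp}\,\phi\Subset\Omega'\Subset\Omega$, and approximate $v$ by $v_n\in C^\infty(\Omega')$ with $v_n\to v$ in $S^{2,p}(\Omega')$---for instance by left-invariant group mollification, which commutes with the $X_i$'s. Then $u_n:=v_n\phi\in C_0^\infty(\Omega)$, and Leibniz's rule for the $X_i$'s gives $u_n\to u$ in $S^{2,p}(\Omega)$. Applying the already-established identity \eqref{rep formula} to each $u_n$ and letting $n\to\infty$: the left-hand side converges in $L^p$; the term $\alpha_{ij}(x)Lu_n(x)$ converges in $L^p$ by \eqref{unif bound alfa_ij} and $a_{hk}\in L^\infty$; and the principal-value integral converges provided the variable-kernel operator $g\mapsto P.V.\!\int_\Omega\Gamma_{ij}(x;y^{-1}\circ x)\,g(y)\,dy$ is continuous on $L^p$, applied to $Lu_n+\sum_{h,k}[a_{hk}(x)-a_{hk}(\cdot)]X_hX_k u_n$.

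The main (and really only) obstacle I anticipate is the $L^p$-continuity of this last singular integral with kernel depending on the parameter $x$. The ingredients needed for Calderón--Zygmund theory on homogeneous groups are in place---Theorem \ref{Thm unif BB1} gives uniform size of the kernel, Theorem \ref{ith:fundsol:lzero}(e)--(f) gives homogeneity and the cancellation condition---but turning these into a uniform operator bound requires decoupling the $x$- and $y$-dependence, which is precisely what the classical expansion-in-spherical-harmonics technique announced in Section 3 achieves. Once this is granted, the substitution step and the density argument above immediately close the proof.
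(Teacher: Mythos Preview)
Your proposal is correct and follows exactly the paper's own approach: the paper's entire argument is the one-line remark ``writing $L_{0}=L+(L_{0}-L)$ and then letting $x$ be equal to $x_{0}$'' preceding the theorem, which is precisely your first paragraph. Your density argument for the $u=v\phi$ case supplies the details the paper omits, and your identification of the $L^{p}$-continuity of the variable-kernel operator $K_{ij}$ as the needed input is accurate---this is established in \cite{bb1} via the spherical-harmonics expansion, as you anticipated.
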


In order to rewrite the above formula in a more compact form, let us introduce
the following singular integral operators:
\begin{equation}
K_{ij}f\left(  x\right)  =P.V.\int_{\Omega}\Gamma_{ij}\left(  x;y^{-1}\circ
x\right)  f\left(  y\right)  dy. \label{K_ij}%
\end{equation}
Moreover, for an operator $K$ and a function $a\in L^{\infty}\left(
\Omega\right)  $, define the commutator%
\begin{equation}
C[K,a]\left(  f\right)  =K\left(  af\right)  -a\cdot K\left(  f\right)  .
\label{comm astratto}%
\end{equation}
Then (\ref{rep formula}) becomes%
\begin{equation}
X_{i}X_{j}\,u=K_{ij}\left(  Lu\right)  -\sum_{h,k=1}^{q}C\left[  K_{ij}%
,a_{hk}\right]  \left(  X_{h}X_{k}\,u\right)  +\alpha_{ij}\cdot Lu
\label{rep formula astratta}%
\end{equation}
for any $u\in C_{0}^{\infty}\left(  \Omega\right)  ,i,j=1,\ldots,q$.

Next, we are going to expand the \textquotedblleft variable
kernel\textquotedblright\ $\Gamma_{ij}\left(  x;u\right)  $ in series of
spherical harmonics. At this point it is more convenient to use the $\rho
$-balls (defined by the quasidistance (\ref{rho})), which have the property
that $B_{\rho}\left(  0,r\right)  =D\left(  r\right)  B_{E}\left(  0,1\right)
$, where $B_{E}$ stands for the Euclidean ball.

Let us denote by%
\[
\left\{  Y_{km}\left(  y\right)  \right\}  _{\substack{k=1,...,g_{m}%
\\m=0,1,...,\infty}}
\]
a complete orthonormal system of $L^{2}\left(  \Sigma_{N}\right)  $ consisting
in spherical harmonics; here $m$ is the degree of the harmonic homogeneous
polynomial $Y_{km},$ and $g_{m}$ the dimension of the space of harmonic
homogeneous polynomial of degree $m$ in $N$ variables. Then, as in \cite{bb1},
for any fixed $x\in\Omega$, $y\in\Sigma_{N}$, we can expand:%
\[
\Gamma_{ij}\left(  x;y\right)  =\sum_{m=1}^{\infty}\sum_{k=1}^{g_{m}}%
c_{ij}^{km}\left(  x\right)  \frac{Y_{km}\left(  y^{\prime}\right)
}{\left\Vert y\right\Vert ^{Q}}\text{ \ \ for }i,j=1,\ldots,q
\]
where $y^{\prime}=D\left(  \left\Vert y\right\Vert ^{-1}\right)  y$, so that
$y^{\prime}\in\Sigma_{N}.$

We explicitly note that for $m=0$ the coefficients in the above expansion are
zero, because of the vanishing property of $\Gamma_{ij}\left(  x;\cdot\right)
$. Also, note that the integral of $Y_{km}\left(  y\right)  $ over $\Sigma
_{N}$, for $m\geq1$, is zero. Then
\begin{equation}
K_{ij}\left(  f\right)  (x)=\sum_{m=1}^{\infty}\sum_{k=1}^{g_{m}}c_{ij}%
^{km}(x)\,T_{km}f(x) \label{spher expansion}%
\end{equation}
with
\begin{align}
T_{km}f(x)  &  =P.V.\int\text{ }H_{km}(y^{-1}\circ x)\,f(y)\,dy\nonumber\\
H_{km}\left(  x\right)   &  =\frac{Y_{km}\left(  x^{\prime}\right)
}{\left\Vert x\right\Vert ^{Q}}. \label{spher har kern}%
\end{align}

We will use the following bounds about spherical harmonics:
\begin{align}
g_{m}  &  \leq c(N)\cdot m^{N-2}\text{ \ for every }m=1,2,\ldots\label{g_m}\\
\left\vert \left(  \frac{\partial}{\partial x}\right)  ^{\beta}\,Y_{km}%
(x)\right\vert  &  \leq c(N)\cdot m^{\left(  \frac{N-2}{2}+\left\vert
\beta\right\vert \right)  }\text{ for }x\in\Sigma_{N},k=1,\ldots
,g_{m},m=1,2,\ldots. \label{D Y_km}%
\end{align}

Moreover, if $f\in C^{\infty}(\Sigma_{N})$ and if $f(x)\,\sim\,\sum
_{k,m}\,b_{km}\,Y_{km}(x)$ is the Fourier expansion of $f(x)$ with respect to
$\left\{  Y_{km}\right\}  $, that is
\[
b_{km}=\int_{\Sigma_{N}}f(x)\,Y_{km}(x)\,d\sigma(x)
\]
then, for every positive integer $n$ there exists $c_{n}$ such that
\begin{equation}
\left\vert b_{km}\right\vert \leq c_{n}\cdot m^{-2n}\sup
_{\substack{_{\left\vert \beta\right\vert =2n}\\x\in\Sigma_{N}}}\left\vert
\left(  \frac{\partial}{\partial x}\right)  ^{\beta}\,f(x)\right\vert .
\label{sup bk}%
\end{equation}
In view of Theorem \ref{Thm unif BB1}, we get by (\ref{sup bk}) the following
bound on the coefficients $c_{ij}^{km}\left(  x\right)  $ appearing in the
expansion (\ref{spher expansion}): \ for every positive integer $n$ there
exists a constant $c=c(n,G,\Lambda)$ such that
\begin{equation}
\sup_{x\in\Omega}\left\vert c_{ij}^{km}\left(  x\right)  \right\vert \leq
c(n,G,\Lambda)\cdot m^{-2n} \label{sup fourier}%
\end{equation}
for every $m=1,2,\ldots;$ $k=1,\ldots,g_{m};$ $i,j=1,\ldots,q$.

{}We will also need a bound on the $LMO_{loc}$ seminorm of these coefficients
and the functions $\alpha_{ij}$:

\begin{theorem}
\label{Thm unif bound fourier}For every $n>0$ there exists $c_{n}>0$ such that%
\[
\left[  c_{ij}^{km}\right]  _{LMO_{loc}\left(  B\left(  \overline{x}%
,R_{1};R_{2}\right)  \right)  }\leq c_{n}\cdot m^{-2n}\cdot\left[  A\right]
_{LMO_{loc}\left(  B\left(  \overline{x},R_{1};R_{2}\right)  \right)  }
\label{bound spherical}%
\]
for any $k,m,i,j,R_{1}<R_{2}$ (with $c_{n}$ independent of $R_{1},R_{2}$). We
have set%
\[
\left[  A\right]  _{LMO_{loc}\left(  B\left(  \overline{x},R_{1};R_{2}\right)
\right)  }=\sup_{h,l}\left[  a_{hl}\right]  _{LMO_{loc}\left(  B\left(
\overline{x},R_{1};R_{2}\right)  \right)  }.
\]
Also,%
\begin{equation}
\left[  \alpha_{ij}\right]  _{LMO_{loc}\left(  B\left(  \overline{x}%
,R_{1};R_{2}\right)  \right)  }\leq c\left[  A\right]  _{LMO_{loc}\left(
B\left(  \overline{x},R_{1};R_{2}\right)  \right)  }. \label{bound alfa ij}%
\end{equation}

\end{theorem}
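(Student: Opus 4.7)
The plan is to identify the $c_{ij}^{km}(x)$ as Fourier coefficients on $\Sigma_N$ and use Theorem \ref{Thm unif BF} to obtain the \emph{Lipschitz-type} bound
\[
|c_{ij}^{km}(x_1)-c_{ij}^{km}(x_2)|\le c_n\, m^{-2n}\,\|A(x_1)-A(x_2)\|,
\]
which, being pointwise, is already stronger than what LMO requires and converts to the $LMO_{loc}$ estimate by an elementary manipulation.

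First I would write $c_{ij}^{km}(x)=\int_{\Sigma_N}\Gamma_{ij}(x,u)\,Y_{km}(u)\,d\sigma(u)$, so that the difference $c_{ij}^{km}(x_1)-c_{ij}^{km}(x_2)$ is precisely the $(k,m)$-Fourier coefficient of the smooth function $u\mapsto\Gamma_{ij}(x_1,u)-\Gamma_{ij}(x_2,u)$ on $\Sigma_N$. Applying (\ref{sup bk}) with exponent $2n$ reduces the problem to bounding $\sup_{u\in\Sigma_N}|\partial_u^\beta[\Gamma_{ij}(x_1,u)-\Gamma_{ij}(x_2,u)]|$ for $|\beta|=2n$. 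To pass from the Euclidean derivatives $\partial_u^\beta$ to $X$-derivatives I would use that the full basis $X_1,\dots,X_N$ of the Lie algebra $\ell$ spans the tangent space pointwise and that each $X_k$ with $k>q$ is a fixed iterated commutator of $X_1,\dots,X_q$; hence, on a neighborhood of the compact set $\Sigma_N$ (which is bounded away from the origin), $\partial_u^\beta(X_iX_j\Gamma(x,\cdot))$ can be written as a finite linear combination, with smooth bounded coefficients independent of $x$, of terms $X_{i_1}\cdots X_{i_p}\Gamma(x,\cdot)$ with $p\le|\beta|+2$. Theorem \ref{Thm unif BF} then gives, for $u\in\Sigma_N$, the uniform estimate $|X_{i_1}\cdots X_{i_p}[\Gamma(x_1,u)-\Gamma(x_2,u)]|\le c_{\Lambda,p}\|A(x_1)-A(x_2)\|$, yielding the displayed pointwise bound on the differences of $c_{ij}^{km}$.

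Second, to turn this pointwise Lipschitz estimate into an $LMO_{loc}$ bound, for any admissible ball $B=B(x,r)$ (with $x\in B(\overline{x},R_1)$ and $B\subset B(\overline{x},R_2)$) I would start from
\[
\frac{1}{|B|}\int_B|c_{ij}^{km}(y)-(c_{ij}^{km})_B|\,dy\le\frac{1}{|B|^2}\iint_{B\times B}|c_{ij}^{km}(y)-c_{ij}^{km}(z)|\,dy\,dz,
\]
apply the pointwise bound to produce a double integral of $\|A(y)-A(z)\|$, and reduce this double integral to $2\sum_{h,l}\frac{1}{|B|}\int_B|a_{hl}(y)-(a_{hl})_B|\,dy$. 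Multiplying through by the factor $1+\log(\mathrm{diam}\, B(\overline{x},R_2)/r)$ and taking the supremum over admissible $(x,r)$ gives the desired bound by $c_n\, m^{-2n}\,[A]_{LMO_{loc}(B(\overline{x},R_1;R_2))}$. The estimate (\ref{bound alfa ij}) for $\alpha_{ij}$ is completely analogous: from the derivation of (\ref{rep formula L0}) in Folland's theory, $\alpha_{ij}(x)$ depends on $x$ only through $\Gamma(x,\cdot)$ and finitely many of its $X$-derivatives evaluated on $\Sigma_N$, so Theorem \ref{Thm unif BF} again yields the pointwise Lipschitz bound $|\alpha_{ij}(x_1)-\alpha_{ij}(x_2)|\le c\|A(x_1)-A(x_2)\|$, from which the $LMO_{loc}$ estimate follows by the same double-integral manipulation.

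The main obstacle is the technical step of converting the Euclidean partials $\partial_u^\beta$ into a controlled sum of $X$-monomials on $\Sigma_N$: one must verify that the expansion coefficients are $x$-independent and uniformly bounded on $\Sigma_N$, so that the Lipschitz-in-$A$ bound from Theorem \ref{Thm unif BF} passes through intact; the homogeneity of $\Gamma$ and the compactness of $\Sigma_N$ away from the origin are precisely what make this possible.
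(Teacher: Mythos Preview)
Your proposal is correct and follows essentially the same route as the paper: write $c_{ij}^{km}(x)$ as a Fourier coefficient on $\Sigma_N$, invoke the decay bound (\ref{sup bk}) to reduce to $\sup_{\Sigma_N}|\partial_u^\beta[\Gamma_{ij}(x_1,\cdot)-\Gamma_{ij}(x_2,\cdot)]|$, convert Euclidean derivatives to $X$-monomials, apply Theorem \ref{Thm unif BF}, and finish with the double-integral manipulation for the mean oscillation. The only cosmetic difference is that you first isolate the pointwise Lipschitz bound $|c_{ij}^{km}(x_1)-c_{ij}^{km}(x_2)|\le c_n m^{-2n}\|A(x_1)-A(x_2)\|$ and then average, whereas the paper averages first and then applies (\ref{sup bk}) to $g_{ij}(x,y)=\Gamma_{ij}(x,y)-\frac{1}{|B_r|}\int_{B_r}\Gamma_{ij}(\cdot,y)$; for $\alpha_{ij}$ the paper makes your appeal to ``Folland's theory'' explicit via Lemma \ref{Prop alfa i j}, which gives the concrete formula $\alpha_{ij}(x)=-\int_{\Sigma_N}X_j\Gamma(x,y)\sum_k b_{ik}(y)\nu_k\,d\sigma(y)$.
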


For the proof of the above Theorem we need the following:

\begin{lemma}
\label{Prop alfa i j}With the above notation, we have:%
\[
\alpha_{ij}\left(  x\right)  =-\int_{\Sigma_{N}}X_{j}\Gamma\left(  x,y\right)
\sum_{k=1}^{n}b_{ik}(y)\nu_{k}\,d\sigma(y)
\]
where $\nu=\left(  \nu_{1},\nu_{2},,...\nu_{n}\right)  $ is the outer normal
to $\Sigma_{N}$ (hence $\nu_{k}=y_{k}$, but this is irrelevant) and
$X_{i}=\sum_{k=1}^{n}b_{k}(y)\partial_{y_{k}}.$
\end{lemma}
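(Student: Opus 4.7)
The plan is to identify $\alpha_{ij}(x)$ as the coefficient of the Dirac mass in the distributional decomposition $X_{i}X_{j}\Gamma(x;\cdot)=\mathrm{p.v.}\,\Gamma_{ij}(x;\cdot)+\alpha_{ij}(x)\,\delta_{0}$, and then to compute this coefficient via the Euclidean divergence theorem on the gauge unit ball.

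First I would introduce a smooth radial cutoff $\psi(z)=\chi(\|z\|)$ with $\chi\in C^{\infty}([0,\infty))$ equal to $0$ on $[0,1/2]$ and to $1$ on $[1,\infty)$; set $\chi_{\epsilon}(w)=\psi(D(1/\epsilon)w)$ and replace $\Gamma$ by $\Gamma^{\epsilon}=\chi_{\epsilon}\Gamma$ in (\ref{rep formula L0}). Applying $X_{i}X_{j}$ to the resulting smooth approximation $f_{\epsilon}$, using the change of variable $w=u^{-1}\circ v$ and then rescaling $w=D(\epsilon)z$, the Leibniz expansion of $X_{i}X_{j}[\chi_{\epsilon}\Gamma]$ splits into a piece $\chi_{\epsilon}X_{i}X_{j}\Gamma$, which converges (thanks to property (f) of Theorem \ref{ith:fundsol:lzero}) to the principal value integral against $\Gamma_{ij}$, and three ``shell'' pieces involving derivatives of $\chi_{\epsilon}$, whose limit is $L_{0}f(v)\cdot\int X_{i}X_{j}[\psi\Gamma(x;\cdot)](z)\,dz$. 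Since the compensating term $\int\psi\,\Gamma_{ij}\,dz$ vanishes by the radial cancellation of $\Gamma_{ij}$ on spheres, this identifies
\[
\alpha_{ij}(x)\;=\;\int_{\mathbb{R}^{N}}X_{i}X_{j}\bigl[\psi\,\Gamma(x;\cdot)\bigr](z)\,dz.
\]

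Next I would split this integral into $\{\|z\|<1\}$ and $\{\|z\|>1\}$. On the outer region $\psi\equiv 1$, so the integrand is $\Gamma_{ij}$, and the annular cancellation on each $\{1<\|z\|<R\}$, together with letting $R\to\infty$, forces this contribution to vanish. On the inner region, since left-invariant vector fields on Carnot groups are Euclidean-divergence-free (the Haar measure coincides with the Lebesgue measure and the group is unimodular), the Euclidean divergence theorem applied to $X_{i}$ acting on $g=X_{j}[\psi\Gamma]$ produces a boundary integral on $\Sigma_{N}=\{|y|=1\}$; there $\psi=1$ and $X_{j}\psi=0$ (because $\chi$ is constant equal to $1$ on $[1,\infty)$), so $g=X_{j}\Gamma(x;\cdot)$ and the boundary contribution is exactly $\int_{\Sigma_{N}}X_{j}\Gamma(x;y)\sum_{k}b_{ik}(y)\nu_{k}\,d\sigma(y)$, matching the stated formula (up to the sign convention for the outward normal).

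The hard part will be the careful bookkeeping of the several powers of $\epsilon$ in the rescaling of the three shell pieces, together with the justification of the pointwise convergence $X_{i}X_{j}f_{\epsilon}(v)\to X_{i}X_{j}f(v)$ that makes the identification of $\alpha_{ij}(x)$ legitimate; both rely on the local integrability of $X_{j}\Gamma$ near the origin (homogeneity $1-Q$) and on the cancellation property (f) of $\Gamma_{ij}$ on spheres.
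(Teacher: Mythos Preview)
Your proposal is correct and follows essentially the same approach as the paper: both introduce the same radial cutoff (called $\eta$ there), rescale by $D(1/\epsilon)$ to remove the $\epsilon$-dependence via homogeneity, and then apply the Euclidean divergence theorem on the gauge unit ball using that $X_{i}f=\sum_{k}\partial_{y_{k}}(b_{ik}f)$. The only cosmetic difference is that the paper, citing an argument of Di~Francesco--Polidoro, restricts from the outset to the ball $\{\|y^{-1}\circ x\|<\epsilon\}$ and so never needs to discard the outer region $\{\|z\|>1\}$ separately.
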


\begin{proof}
We follow an argument in \cite[proof of Proposition 2.11]{DP}. Let $\eta$ be a
cutoff function such that $0\leq\eta\leq1,$ $\eta\left(  y\right)  =1$ for
$\left\Vert y\right\Vert \geq1,\eta\left(  y\right)  =0$ for $\left\Vert
y\right\Vert \leq1/2,$ and let $\eta_{\varepsilon}\left(  y\right)
=\eta\left(  D\left(  1/\left(  \varepsilon\right)  \right)  y\right)  .$
Reasoning like in the quoted proof, it is enough to show that%
\begin{align*}
A_{ij}^{\varepsilon}\left(  x_{0},x\right)   &  \equiv\int_{\left\Vert
y^{-1}\circ x\right\Vert <\varepsilon}X_{i}\left[  \eta_{\varepsilon}\left(
y^{-1}\circ x\right)  X_{j}\Gamma\left(  x_{0},y^{-1}\circ x\right)  \right]
dy\rightarrow\\
&  \rightarrow-\int_{\Sigma_{N}}X_{j}\Gamma\left(  x_{0},y\right)  \sum
_{k=1}^{n}b_{k}(y)\nu_{k}\,d\sigma(y)
\end{align*}
for $\varepsilon\rightarrow0.$ Namely, $\alpha_{ij}\left(  x\right)
=\lim_{\varepsilon\rightarrow0}A_{ij}^{\varepsilon}\left(  x,x\right)  $.
Actually the frozen point $x_{0}$ is irrelevant in this calculation, so we
will drop it, writing $A_{ij}^{\varepsilon}\left(  x\right)  ,\Gamma\left(
y\right)  ,$ etc. We can write:%
\begin{align*}
A_{ij}^{\varepsilon}\left(  x\right)   &  =\int_{\left\Vert y^{-1}\circ
x\right\Vert <\varepsilon}\left[  X_{i}\eta_{\varepsilon}\cdot X_{j}%
\Gamma+\eta_{\varepsilon}\cdot X_{i}X_{j}\Gamma\right]  \left(  y^{-1}\circ
x\right)  dy=\\
&  =\int_{\left\Vert y\right\Vert <\varepsilon}\left[  X_{i}\eta_{\varepsilon
}\cdot X_{j}\Gamma+\eta_{\varepsilon}\cdot X_{i}X_{j}\Gamma\right]  \left(
y\right)  dy.
\end{align*}
Then, since $X_{i}\left[  \eta_{\varepsilon}\left(  y\right)  \right]
=X_{i}\left[  \eta\left(  D\left(  1/\varepsilon\right)  y\right)  \right]
=\frac{1}{\varepsilon}\left(  X_{i}\eta\right)  \left(  D\left(
1/\varepsilon\right)  y\right)  $ and $\Gamma$ is $2-Q$-homogeneous, the
change of variables $y=D\left(  \varepsilon\right)  w$ gives%
\[
A_{ij}^{\varepsilon}\left(  x\right)  =\int_{\left\Vert w\right\Vert
<1}\left[  X_{i}\eta\cdot X_{j}\Gamma+\eta\cdot X_{i}X_{j}\Gamma\right]
\left(  w\right)  dw=\int_{\left\Vert w\right\Vert <1}X_{i}\left[  \eta\cdot
X_{j}\Gamma\right]  \left(  w\right)  dw.
\]
Next, we apply the divergence theorem, recalling that $X_{i}f\left(  w\right)
=\sum_{k=1}^{n}b_{ik}(w)\partial_{w_{k}}f\left(  w\right)  =\sum_{k=1}%
^{n}\partial_{w_{k}}\left[  b_{ik}f\right]  \left(  w\right)  .$ Then, letting
$\nu_{k}$ be the $k$-th component of the outer normal at the surface
$\left\Vert w\right\Vert =1,$
\[
A_{ij}^{\varepsilon}\left(  x\right)  =-\int_{\left\Vert w\right\Vert
=1}\left[  \eta X_{j}\Gamma\sum_{k=1}^{n}b_{ik}\nu_{k}\right]  \left(
w\right)  d\sigma\left(  w\right)  =-\int_{\left\Vert w\right\Vert =1}\left[
X_{j}\Gamma\sum_{k=1}^{n}b_{ik}\nu_{k}\right]  \left(  w\right)
d\sigma\left(  w\right)  ,
\]
which gives the desired result.
\end{proof}

\bigskip

\begin{proof}
[Proof of Theorem \ref{Thm unif bound fourier} from Theorem \ref{Thm unif BF}%
]First of all, since the vector fields $X_{1},...,X_{q}$ satisfy
H\"{o}rmander's condition, any derivative $D_{y}^{\beta}\Gamma_{ij}\left(
x,y\right)  $ can be expressed as a linear combination of derivatives
$X_{i_{1}}X_{i_{2}}...X_{i_{p}}\Gamma\left(  x,y\right)  $, for $p$ large
enough. Therefore, (\ref{uniform 6}) implies also the following%
\begin{equation}
\left\vert D_{y}^{\beta}\Gamma_{ij}\left(  x_{1},y\right)  -D_{y}^{\beta
}\Gamma_{ij}\left(  x_{2},y\right)  \right\vert \leq c_{\Lambda,\beta
}\left\Vert A\left(  x_{1}\right)  -A\left(  x_{2}\right)  \right\Vert
\left\Vert y\right\Vert ^{2-Q-c\left(  \beta\right)  }.
\label{BLU revisited 2}%
\end{equation}
Then, let $B_{r}$ be any ball centered at some point of $B\left(  \overline
{x},R_{1}\right)  $ and contained in $B\left(  \overline{x},R_{2}\right)  $.
Since%
\[
c_{ij}^{km}\left(  x\right)  =\int_{\Sigma_{N}}\Gamma_{ij}\left(  x,y\right)
\,Y_{km}(y)\,d\sigma(y),
\]
we can write:%
\begin{align*}
c_{ij}^{km}\left(  x\right)  -\left(  c_{ij}^{km}\right)  _{B_{r}}  &
=\int_{\Sigma_{N}}\left[  \Gamma_{ij}\left(  x,y\right)  -\left(  \frac
{1}{|B_{r}|}\int_{B_{r}}\Gamma_{ij}\left(  x,y\right)  dx\right)  \right]
Y_{km}(y)d\sigma(y)\\
&  \equiv\int_{\Sigma_{N}}g_{ij}\left(  x,y\right)  Y_{km}(y)d\sigma(y).
\end{align*}
Then, by (\ref{sup bk}), we know that for every positive integer $n$ there
exists $c_{n}$ such that
\begin{align}
&  \left\vert c_{ij}^{km}\left(  x\right)  -\left(  c_{ij}^{km}\right)
_{B_{r}}\right\vert \leq c_{n}\cdot m^{-2n}\sup_{\left\vert \beta\right\vert
=2n,\text{ }y\in\Sigma_{N}}\left\vert \left(  \frac{\partial}{\partial
y}\right)  ^{\beta}g_{ij}\left(  x,y\right)  \right\vert \nonumber\\
&  =c_{n}\cdot m^{-2n}\sup_{\left\vert \beta\right\vert =2n,\text{ }y\in
\Sigma_{N}}\left\vert D_{y}^{\beta}\Gamma_{ij}\left(  x,y\right)  -\frac
{1}{|B_{r}|}\int_{B_{r}}D_{y}^{\beta}\Gamma_{ij}\left(  u,y\right)
du\right\vert . \label{bound coefficients 1}%
\end{align}

By (\ref{BLU revisited 2}) we have:
\begin{align*}
&  \frac{1}{\left\vert B_{r}\right\vert }\int_{B_{r}}\left\vert D_{y}^{\beta
}\Gamma_{ij}\left(  x,y\right)  -\frac{1}{\left\vert B_{r}\right\vert }%
\int_{B_{r}}D_{y}^{\beta}\Gamma_{ij}\left(  u,y\right)  du\right\vert dx\\
&  =\frac{1}{\left\vert B_{r}\right\vert }\int_{B_{r}}\left\vert \frac
{1}{\left\vert B_{r}\right\vert }\int_{B_{r}}\left[  D_{y}^{\beta}\Gamma
_{ij}\left(  x,y\right)  -D_{y}^{\beta}\Gamma_{ij}\left(  u,y\right)  \right]
du\right\vert dx\\
&  \leq\frac{1}{\left\vert B_{r}\right\vert }\int_{B_{r}}\frac{1}{\left\vert
B_{r}\right\vert }\int_{B_{r}}\left\vert D_{y}^{\beta}\Gamma_{ij}\left(
x,y\right)  -D_{y}^{\beta}\Gamma_{ij}\left(  u,y\right)  \right\vert dudx\\
&  \leq c_{\Lambda,\beta}\left\Vert y\right\Vert ^{2-Q-c\left(  \beta\right)
}\frac{1}{\left\vert B_{r}\right\vert }\int_{B_{r}}\frac{1}{\left\vert
B_{r}\right\vert }\int_{B_{r}}\left\Vert A\left(  x\right)  -A\left(
u\right)  \right\Vert dudx\\
&  \leq2c_{\Lambda,\beta}\left\Vert y\right\Vert ^{2-Q-c\left(  \beta\right)
}\frac{1}{\left\vert B_{r}\right\vert }\int_{B_{r}}\left\Vert A\left(
x\right)  -\frac{1}{\left\vert B_{r}\right\vert }\int_{B_{r}}A\left(
u\right)  du\right\Vert dx
\end{align*}
(where $\left\Vert \cdot\right\Vert $ inside the last integral just denotes
the matrix norm). Therefore by (\ref{bound coefficients 1}) we have:%
\[
\frac{1}{\left\vert B_{r}\right\vert }\int_{B_{r}}\left\vert c_{ij}%
^{km}\left(  x\right)  -\left(  c_{ij}^{km}\right)  _{B_{r}}\right\vert dx\leq
c_{n}\cdot m^{-2n}\cdot\frac{1}{\left\vert B_{r}\right\vert }\int_{B_{r}%
}\left\Vert A\left(  x\right)  -\frac{1}{\left\vert B_{r}\right\vert }%
\int_{B_{r}}A\left(  u\right)  du\right\Vert dx,
\]
and%
\[
\left[  c_{ij}^{km}\right]  _{LMO}\leq c_{r}\cdot m^{-2r}\cdot\left[
A\right]  _{LMO}.
\]

Next, we prove \eqref{bound alfa ij}. By Lemma \ref{Prop alfa i j} we can
write%
\[
\alpha_{ij}\left(  x\right)  -\left(  \alpha_{ij}\right)  _{B_{r}}%
=-\int_{\Sigma_{N}}\left[  \Gamma_{j}\left(  x,y\right)  -\frac{1}{\left\vert
B_{r}\right\vert }\int_{B_{r}}\Gamma_{j}\left(  w,y\right)  dw\right]
\sum_{k=1}^{n}b_{ik}(y)\nu_{k}\,d\sigma(y)
\]
so that, by (\ref{uniform 6}),%
\begin{align*}
\left\vert \alpha_{ij}\left(  x\right)  -\left(  \alpha_{ij}\right)  _{B_{r}%
}\right\vert  &  \leq c\int_{\Sigma_{N}}\left\vert \Gamma_{j}\left(
x,y\right)  -\frac{1}{\left\vert B_{r}\right\vert }\int_{B_{r}}\Gamma
_{j}\left(  w,y\right)  dw\right\vert \,d\sigma(y)\leq\\
&  \leq c\int_{\Sigma_{N}}\left\vert \frac{1}{\left\vert B_{r}\right\vert
}\int_{B_{r}}\left[  \Gamma_{j}\left(  x,y\right)  -\Gamma_{j}\left(
w,y\right)  \right]  dw\right\vert \,d\sigma(y)\leq\\
&  \leq c\int_{\Sigma_{N}}\frac{1}{\left\vert B_{r}\right\vert }\int_{B_{r}%
}\left\vert \Gamma_{j}\left(  x,y\right)  -\Gamma_{j}\left(  w,y\right)
\right\vert dw\,d\sigma(y)\leq\\
&  \leq c\int_{\Sigma_{N}}\frac{1}{\left\vert B_{r}\right\vert }\int_{B_{r}%
}\left\Vert A\left(  x\right)  -A\left(  w\right)  \right\Vert \left\Vert
y\right\Vert ^{1-Q}dw\,d\sigma(y)=\\
&  =c\frac{1}{\left\vert B_{r}\right\vert }\int_{B_{r}}\left\Vert A\left(
x\right)  -A\left(  w\right)  \right\Vert dw\,
\end{align*}
and%
\[
\frac{1}{\left\vert B_{r}\right\vert }\int_{B_{r}}\left\vert \alpha
_{ij}\left(  x\right)  -\left(  \alpha_{ij}\right)  _{B_{r}}\right\vert dx\leq
c\frac{1}{\left\vert B_{r}\right\vert }\int_{B_{r}}\frac{1}{\left\vert
B_{r}\right\vert }\int_{B_{r}}\left\Vert A\left(  x\right)  -A\left(
w\right)  \right\Vert dwdx\leq
\]
as before%
\[
\leq c\frac{1}{\left\vert B_{r}\right\vert }\int_{B_{r}}\left\Vert A\left(
x\right)  -\frac{1}{\left\vert B_{r}\right\vert }\int_{B_{r}}A\left(
\cdot\right)  \right\Vert dx,
\]
which gives the desired bound on $LMO$ norm of $\alpha_{ij}$ in terms of that
of the matrix $A$.
\end{proof}

\section{Singular integral estimates}

The main object of this section is to prove the following two theorems, which
will be the key tool in order to derive our local $BMO^{p}$ estimates from the
representation formula (\ref{rep formula astratta}).

\begin{theorem}
[Singular integral estimate]\label{Thm Singular integral variable kernel}If
$K_{ij}$ are the singular integral operators defined in \eqref{K_ij}, then for
any $p\in\left(  1,\infty\right)  $ there exists $C>0$ such that:
\begin{align*}
\left[  K_{ij}f\right]  _{BMO_{loc}\left(  B\left(  \overline{x},R;3R\right)
\right)  }  &  \leq C\left(  1+\left[  A\right]  _{LMO_{loc}\left(
\Omega\right)  }\right)  \cdot\\
&  \cdot\left(  \left[  f\right]  _{BMO_{loc}\left(  B\left(  \overline
{x},R;3R\right)  \right)  }+\frac{\Vert f\Vert_{L^{p}(B(\overline{x},R))}%
}{|B(\overline{x},R)|^{1/p}}\right)
\end{align*}
for any ball $B\left(  \overline{x},3R\right)  \subset\Omega,f\in
BMO_{loc}^{p}\left(  B\left(  \overline{x},R;3R\right)  \right)  $ with
sprt$f\subset B\left(  \overline{x},R\right)  $. The number $C$ depends on
$p,G,\Lambda$.
\end{theorem}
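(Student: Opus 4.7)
The plan is to expand the variable kernel of $K_{ij}$ in spherical harmonics, reducing the estimate to a series of convolution-type singular integrals with scalar coefficients, and then recombining via: (i) the local $BMO_{loc}^p$ estimates for convolution singular integrals (to be established in Section 4), (ii) the local version of the ``$L^\infty\cap LMO$ multiplies $BMO$'' result (also from Section 4), and (iii) the uniform bounds on the Fourier coefficients coming from Theorems \ref{Thm unif BB1} and \ref{Thm unif bound fourier}. The rapid $m^{-2n}$ decay of the coefficients guarantees that the resulting series converges.

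Concretely, starting from (3.9) I would write
$$K_{ij}f(x) = \sum_{m=1}^\infty \sum_{k=1}^{g_m} c_{ij}^{km}(x)\, T_{km}f(x),$$
so it suffices to bound $[c_{ij}^{km}\,T_{km}f]_{BMO_{loc}(B(\overline{x},R;3R))}$ term by term. For the product, the local module-type estimate would yield
$$[c_{ij}^{km}\,T_{km}f]_{BMO_{loc}} \leq c\Bigl(\|c_{ij}^{km}\|_{\infty}\, [T_{km}f]_{BMO_{loc}} + [c_{ij}^{km}]_{LMO_{loc}}\, \|T_{km}f\|_{BMO_{loc}^{p}}\Bigr).$$
For the second factor, the local convolution-kernel estimate (applied in the ball $B(\overline{x},R;3R)$, using that $\operatorname{sprt} f \subset B(\overline{x},R)$ and Lemma \ref{Lem KR}) would give
$$\|T_{km}f\|_{BMO_{loc}^{p}(B(\overline{x},R;3R))} \leq C_0\, m^{\gamma}\Bigl([f]_{BMO_{loc}(B(\overline{x},R;3R))} + \tfrac{\|f\|_{L^p(B(\overline{x},R))}}{|B(\overline{x},R)|^{1/p}}\Bigr),$$
with a polynomial exponent $\gamma=\gamma(N)$ coming from (\ref{D Y_km}) controlling the smoothness of $H_{km}$.

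It remains to combine these with the coefficient bounds $\|c_{ij}^{km}\|_\infty \leq c_n m^{-2n}$ from (\ref{sup fourier}) and $[c_{ij}^{km}]_{LMO_{loc}} \leq c_n m^{-2n} [A]_{LMO_{loc}(\Omega)}$ from Theorem \ref{Thm unif bound fourier}. This gives, for each $(k,m)$,
$$[c_{ij}^{km}\,T_{km}f]_{BMO_{loc}} \leq c_n\, m^{-2n+\gamma}\,\bigl(1+[A]_{LMO_{loc}}\bigr)\Bigl([f]_{BMO_{loc}} + \tfrac{\|f\|_{L^p}}{|B(\overline{x},R)|^{1/p}}\Bigr).$$
Summing over $k=1,\ldots,g_m$ and $m\geq 1$, using $g_m \leq c\, m^{N-2}$ from (\ref{g_m}), yields the series $\sum_m m^{N-2-2n+\gamma}$, which converges once $n$ is taken large enough (depending only on $G$). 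This produces the desired inequality.

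The main obstacle is not this combinatorial assembly but rather the ingredients coming from Section 4, which must be reworked from scratch in the genuinely local setting: proving a local $BMO_{loc}^p \to BMO_{loc}^p$ bound for the convolution singular integrals $T_{km}$ (whose natural theory is global), and proving that $LMO_{loc}\cap L^\infty$ locally multiplies $BMO_{loc}^p$. The support assumption on $f$ together with Lemma \ref{Lem KR} and Proposition \ref{Prop norme locali globali} are the tools that let one pass between the three concentric balls $B(\overline{x},R)\subset B(\overline{x},3R)\subset\Omega$ and control tail contributions coming from points far from $\operatorname{sprt} f$. Once these local ingredients are in place, the spherical-harmonic summation argument above converts them mechanically into the variable-kernel estimate.
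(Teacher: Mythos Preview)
Your proposal is correct and follows essentially the same route as the paper: expand $K_{ij}f$ via spherical harmonics, apply the local multiplication theorem (Theorem \ref{Thm multiplication}) to each term $c_{ij}^{km}T_{km}f$, bound $T_{km}f$ in $BMO_{loc}^{p}$ on a larger ball $B(\overline{x},R;KR)$ via the convolution-kernel estimate (Theorem \ref{2.7} with Remark \ref{remark modified T}) and the $L^{p}$ theory, then invoke (\ref{sup fourier}), Theorem \ref{Thm unif bound fourier}, (\ref{g_m}), (\ref{D Y_km}) to make the series $\sum_{m}m^{N-2+N/2-2n}$ converge for large $n$, with Lemma \ref{Lem KR} used at the end to return from $B(\overline{x},R;K'R)$ to $B(\overline{x},R;3R)$. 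The only cosmetic difference is that the paper states the multiplication bound as a product $(\|\psi\|_{\infty}+[\psi]_{LMO_{loc}})\cdot([f]_{BMO_{loc}}+\|f\|_{L^{p}}/|B|^{1/p})$ rather than the separated form you wrote, but both follow from the same computation.
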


\begin{theorem}
[Local commutator estimate]\label{Thm commutator variable kernel}Let $b\in
LMO_{loc}(\Omega)$, $K_{ij}$ as before, and $C\left[  K_{ij},b\right]  $ the
commutator, defined as in $\eqref{comm astratto}$. Then for any $p\in\left(
1,\infty\right)  $ there exists a constant $C$ and two absolute constants
$K>H>3$, such that for any $R>0$ with $B\left(  \overline{x},KR\right)
\subset\Omega$, any $f\in BMO_{loc}^{p}\left(  B\left(  \overline
{x},R;3R\right)  \right)  $ with sprt$f\subset$ $B\left(  \overline
{x},R\right)  $,%
\begin{align*}
&  [C\left[  K_{ij},b\right]  f]_{BMO_{loc}\left(  B\left(  \overline
{x},R;3R\right)  \right)  }\leq C\left(  1+\left[  A\right]  _{LMO_{loc}%
\left(  \Omega\right)  }\right)  [b]_{LMO_{loc}\left(  B\left(  \overline
{x},HR;KR\right)  \right)  }\cdot\\
&  \cdot\left\{  \lbrack f]_{BMO_{loc}\left(  B\left(  \overline
{x},R;3R\right)  \right)  }+\frac{1}{\left\vert B\left(  \overline
{x},R\right)  \right\vert ^{1/p}}\Vert f\Vert_{L^{p}\left(  B\left(
\overline{x},R\right)  \right)  }\right\}  .
\end{align*}
The number $C$ depends on $p,G,\Lambda$, but not on $f,b,R$. This means in
particular that, if $b\in VLMO_{loc}(\Omega),$ then for any $\varepsilon>0$
there exists $R>0$ such that%
\[
C[b]_{LMO_{loc}\left(  B\left(  \overline{x},HR;KR\right)  \right)
}<\varepsilon.
\]

\end{theorem}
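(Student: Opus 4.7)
The idea is to reduce the statement to the (already established in Section~4) local commutator estimate for the convolution operators $T_{km}$ via the spherical harmonic expansion \eqref{spher expansion}. Since $c_{ij}^{km}(x)$ is evaluated at the same point $x$ at which the commutator is taken, it commutes with multiplication by $b(x)$ and factors out of the commutator, so
\[
C[K_{ij},b]f(x) = \sum_{m=1}^{\infty}\sum_{k=1}^{g_{m}} c_{ij}^{km}(x)\,C[T_{km},b]f(x).
\]
The $BMO_{loc}$ seminorm of the left-hand side is thus dominated by the sum of the $BMO_{loc}$ seminorms of the summands $c_{ij}^{km}\cdot C[T_{km},b]f$, which I would estimate individually.

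For each $(k,m)$ I would combine three ingredients. First, the local convolution commutator estimate of Section~4 (a local reshaping of the Sun--Su theorem) controls $\|C[T_{km},b]f\|_{BMO_{loc}^{p}(B(\overline x,R;3R))}$ by $P(m)\,[b]_{LMO_{loc}(B(\overline x,HR;KR))}\,\|f\|_{BMO_{loc}^{p}(B(\overline x,R;3R))}$, where $P(m)$ depends polynomially on $m$ (the polynomial growth coming from the derivative bounds \eqref{D Y_km} on the kernels $H_{km}$), and where the annulus $B(\overline x,HR;KR)$ reflects the global nature of the convolution. Second, the local analogue of Stegenga's multiplier theorem (also in Section~4) yields $[hg]_{BMO_{loc}} \le c\,(\|h\|_{\infty}+[h]_{LMO_{loc}})\,\|g\|_{BMO_{loc}^{p}}$ for any $h\in L^{\infty}\cap LMO_{loc}$. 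Third, the coefficient bounds \eqref{sup fourier} and Theorem~\ref{Thm unif bound fourier} give, for every positive integer $n$,
\[
\|c_{ij}^{km}\|_{\infty}+[c_{ij}^{km}]_{LMO_{loc}(B(\overline x,R;3R))} \leq c_{n}\,m^{-2n}\bigl(1+[A]_{LMO_{loc}(\Omega)}\bigr).
\]
Multiplying the three bounds gives, for the $(k,m)$-summand, a factor $c_{n}\,m^{-2n}P(m)\bigl(1+[A]_{LMO_{loc}(\Omega)}\bigr)\,[b]_{LMO_{loc}(B(\overline x,HR;KR))}$ times the $BMO_{loc}^{p}$ norm of $f$. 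Summing over $k\in\{1,\dots,g_{m}\}$ using $g_{m}\leq c\,m^{N-2}$ from \eqref{g_m} and then over $m$ leaves the series $\sum_{m} m^{N-2-2n}P(m)$, which converges once $n$ is chosen large enough (depending only on the fixed polynomial $P$, and hence ultimately only on $p,G,\Lambda$). The last assertion of the theorem, concerning $VLMO_{loc}$, is then immediate from \eqref{eta f}: $[b]_{LMO_{loc}(B(\overline x,HR;KR))}\leq \eta_{b}(2KR)\to 0$ as $R\to 0$.

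\textbf{Main obstacle.} The real work sits in Section~4 rather than in the present reduction: one must establish the convolution commutator estimate for $T_{km}$ with \emph{quantitative} polynomial dependence on $m$ in the constant, and do so in the local $BMO_{loc}(\Omega_{1},\Omega_{2})$ framework. The annular geometry and the absolute constants $H,K$ emerge exactly from this localization, because the global (convolution) nature of $T_{km}$ forces oscillations of $b$ to be measured on a neighborhood strictly larger than the target ball $B(\overline x,3R)$ on which $C[K_{ij},b]f$ is estimated. Once the convolution commutator estimate is available with the correct $m$-dependence, together with the local $L^{\infty}\cap LMO_{loc}$ multiplier result, the present theorem is essentially a summation argument made convergent by the rapid decay of the Fourier coefficients supplied by Theorem~\ref{Thm unif bound fourier}.
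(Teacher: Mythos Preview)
Your proposal is correct and follows essentially the same route as the paper: expand $C[K_{ij},b]f=\sum_{k,m}c_{ij}^{km}\,C[T_{km},b]f$, apply the local multiplication theorem (Theorem~\ref{Thm multiplication}) to peel off $c_{ij}^{km}$, invoke the convolution commutator estimate (Theorem~\ref{Thm commutator convolution} with Remark~\ref{remark modified Tb}) and the $L^{p}$ commutator bound \eqref{commutator Lp} for $C[T_{km},b]f$, then sum using \eqref{g_m}, \eqref{D Y_km}, \eqref{sup fourier} and Theorem~\ref{Thm unif bound fourier}. One small point to watch: the multiplication theorem outputs a bound involving $[C[T_{km},b]f]_{BMO_{loc}(B(\overline{x},R;KR))}$ and $\Vert C[T_{km},b]f\Vert_{L^{p}(B(\overline{x},KR))}$ on the \emph{enlarged} annulus, not on $B(\overline{x},R;3R)$, which is exactly why the paper invokes the ``modified'' versions in Remarks~\ref{remark modified T} and~\ref{remark modified Tb}; your discussion of the annular geometry shows you are aware of this, but be sure to state the enlarged-annulus version when you write it out.
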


The above two theorems will be derived exploiting the expansion in spherical
harmonics and the bounds on the corresponding coefficients discussed in the
previous section, applying similar theorems regarding singular integrals of
convolution type (modeled on spherical harmonics), and a multiplication
theorem (see next Theorem \ref{Thm multiplication}). Therefore the plan of
this section is the following: after establishing some basic estimates
regarding $BMO$ type norms (\S \ \ref{subsec real analysis}), and particularly
the aforementioned multiplication theorem, we will state and prove singular
integral estimates for convolution kernels
(\S \ \ref{subsec singular convolution}) and then we will prove Theorems
\ref{Thm Singular integral variable kernel} and
\ref{Thm commutator variable kernel} (\S \ \ref{subsec variable singular}).

\subsection{Preliminary real analysis estimates\label{subsec real analysis}}

Throughout the following, we will need a localized version of two well-known
facts, namely a multiplication theorem and John-Nirenberg theorem.

\begin{theorem}
\label{Thm multiplication}There exists an absolute constant $K>3$ such that if
$f\in BMO_{loc}^{p}\left(  B\left(  \overline{x},R;KR\right)  \right)  $ for
some $1<p<\infty$, and $\psi\in L^{\infty}\cap LMO_{loc}\left(  B\left(
\overline{x},R;3R\right)  \right)  $, then $\psi f\in BMO_{loc}^{p}\left(
B\left(  \overline{x},R;3R\right)  \right)  $ and%
\begin{align*}
\left[  f\psi\right]  _{BMO_{loc}\left(  B\left(  \overline{x},R;3R\right)
\right)  }  &  \leq C\left(  \left\Vert \psi\right\Vert _{L^{\infty}\left(
B\left(  \overline{x},3R\right)  \right)  }+\left[  \psi\right]
_{LMO_{loc}\left(  B\left(  \overline{x},R;3R\right)  \right)  }\right)
\cdot\\
&  \cdot\left(  \lbrack f]_{BMO_{loc}\left(  B\left(  \overline{x}%
,R;KR\right)  \right)  }+\frac{\Vert f\Vert_{L^{p}(B(\overline{x},KR))}%
}{|B(\overline{x},R)|^{1/p}}\right)  ,
\end{align*}
with $C=C\left(  G\right)  .$
\end{theorem}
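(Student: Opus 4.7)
The plan is to adapt Stegenga's classical proof that $L^\infty\cap LMO$ multiplies $BMO$ to the present two-scale local setting. Fix an admissible ball $B=B(x_0,r)$, meaning $x_0\in B(\overline{x},R)$ and $B\subset B(\overline{x},3R)$, and use the algebraic identity $\psi f-\psi_B f_B=\psi(f-f_B)+(\psi-\psi_B)f_B$. Since $(\psi f)_B$ is the best constant approximating $\psi f$ in $L^1(B)$, the mean oscillation of $\psi f$ on $B$ is controlled (up to the usual factor of $2$) by
\[
I=\|\psi\|_{L^\infty(B(\overline{x},3R))}\cdot\frac{1}{|B|}\int_B|f-f_B|,\qquad II=|f_B|\cdot\frac{1}{|B|}\int_B|\psi-\psi_B|.
\]
The term $I$ is immediately bounded by $\|\psi\|_{L^\infty}\,[f]_{BMO_{loc}(B(\overline{x},R;3R))}$, so the entire argument reduces to estimating $II$.

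For $II$ I would split $|f_B|\le|f_B-f_{B(\overline{x},R)}|+|f_{B(\overline{x},R)}|$. The second piece is handled by H\"older, $|f_{B(\overline{x},R)}|\le c\,\|f\|_{L^p(B(\overline{x},R))}/|B(\overline{x},R)|^{1/p}$. For the first piece I would use a standard telescoping along a chain of doubling balls joining $B(x_0,r)$ to $B(\overline{x},R)$, whose length is comparable to $1+\log(R/r)$, producing
\[
|f_B-f_{B(\overline{x},R)}|\le c\,\bigl(1+\log(R/r)\bigr)\,[f]_{BMO_{loc}(B(\overline{x},R;KR))},
\]
where $K>3$ is an absolute constant chosen large enough that every ball in the chain has center in $B(\overline{x},R)$ and lies inside $B(\overline{x},KR)$. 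This asymmetry between the scale $3R$ at which $\psi$ is controlled and the larger scale $KR$ at which $f$ is controlled is exactly what the statement encodes. When $r$ is already comparable to $R$ no chain is needed and the estimate is immediate.

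The final step is to plug these bounds into $II$ together with the defining inequality of the $LMO_{loc}$ seminorm,
\[
\frac{1}{|B|}\int_B|\psi-\psi_B|\le\frac{[\psi]_{LMO_{loc}(B(\overline{x},R;3R))}}{1+\log(\mathrm{diam}\,B(\overline{x},3R)/r)},
\]
whose logarithmic denominator cancels the factor $1+\log(R/r)$ produced by the chain (these two logarithms differ only by an additive constant depending on $G$), yielding a contribution of the form $c\,[f]_{BMO_{loc}(B(\overline{x},R;KR))}\,[\psi]_{LMO_{loc}(B(\overline{x},R;3R))}$; on the H\"older piece of $|f_B|$ I would instead use the trivial bound $\frac{1}{|B|}\int_B|\psi-\psi_B|\le 2\|\psi\|_\infty$ together with the doubling equivalence $|B(\overline{x},KR)|\simeq|B(\overline{x},R)|$ to produce the term $c\,\|\psi\|_\infty\,\|f\|_{L^p(B(\overline{x},KR))}/|B(\overline{x},R)|^{1/p}$. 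The $L^p$ half of the conclusion is trivial, since $\|\psi f\|_{L^p(B(\overline{x},3R))}\le\|\psi\|_\infty\|f\|_{L^p(B(\overline{x},KR))}$. The main technical obstacle is the telescoping step: one must verify carefully that in the quasi-metric of the Carnot group the intermediate balls in the chain can all be taken admissible for the $BMO_{loc}(B(\overline{x},R;KR))$ seminorm (centers in $B(\overline{x},R)$, balls in $B(\overline{x},KR)$), which is what fixes the absolute value of $K$ and forces the two-scale shape of the hypothesis on $f$.
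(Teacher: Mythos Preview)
Your proposal is correct and follows essentially the same route as the paper: split $\psi f - \psi_B f_B$ into the $\psi(f-f_B)$ piece (controlled by $\|\psi\|_\infty$) and the $(\psi-\psi_B)f_B$ piece, bound $|f_B|$ by a dyadic telescoping that produces a factor $c(1+\log(R/r))$ plus an $L^p$ term, and then absorb the logarithm with the $LMO$ denominator. The paper telescopes from $B(x_0,r)$ up to $B(x_0,2^{n+1}r)$ with $2^{n+1}r\simeq 3R$ (all balls centered at $x_0$, contained in $B(\overline{x},13R)$, so $K=13$), and handles the $L^p$ piece via H\"older on this last large ball; you instead telescope to $B(\overline{x},R)$ and use $\|\psi\|_\infty$ rather than $[\psi]_{LMO}$ on the $L^p$ part of $|f_B|$ --- both are cosmetic variations yielding the same final bound.
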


\begin{proof}
Let $n$ be a positive integer such that $2^{n}r<3R\leq2^{n+1}r$. We set
$B_{k}=B(x_{0},2^{k}r)$, with $k=0,1,...,n$. Since $\int_{B_{\rho}%
}|f-f_{B_{\rho}}|dx\leq2\int_{B_{t}}|f-f_{B_{t}}|dx$ for $\rho<t$, we have
that for $f\in BMO_{loc}^{p}\left(  B\left(  \overline{x},R;13R\right)
\right)  $, $B(x_{0},r)\subset B(\overline{x},3R)$, $x_{0}\in B(\overline
{x},R)$
\begin{align}
&  |f_{B(x_{0},r)}|\leq\sum_{k=0}^{n}|f_{B_{k}}-f_{B_{k+1}}|+|f_{B_{n+1}%
}|\nonumber\\
&  \leq\sum_{k=0}^{n}\frac{1}{|B_{k}|}\int_{B_{k+1}}|f(x)-f_{B_{k+1}}%
|dx+\frac{c}{|B(\overline{x},R)|^{1/p}}\Vert f\Vert_{L^{p}(B(\overline
{x},7R))}\nonumber\\
&  \leq\frac{2^{Q}}{\log2}\sum_{k=0}^{n}\int_{2^{k+1}r}^{2^{k+2}r}\frac{ds}%
{s}\frac{1}{|B_{k+1}|}\int_{B_{k+1}}|f(x)-f_{B_{k+1}}|dx+\frac{c}%
{|B(\overline{x},R)|^{1/p}}\Vert f\Vert_{L^{p}(B(\overline{x},7R))}\nonumber\\
&  \leq c\sum_{k=0}^{n}\int_{2^{k+1}r}^{2^{k+2}r}\left(  \frac{s}{2^{k+1}%
r}\right)  ^{Q}\frac{1}{|B\left(  x_{0},s\right)  |}\int_{B\left(
x_{0},s\right)  }|f(x)-f_{B\left(  x_{0},s\right)  }|dx\frac{ds}{s}+\frac
{c}{|B(\overline{x},R)|^{1/p}}\Vert f\Vert_{L^{p}(B(\overline{x}%
,7R))}\nonumber\\
&  \leq c\int_{2r}^{2^{n+2}r}\frac{ds}{s}[f]_{BMO_{loc}\left(  B\left(
\overline{x},R;13R\right)  \right)  }+\frac{c}{|B(\overline{x},R)|^{1/p}}\Vert
f\Vert_{L^{p}(B(\overline{x},7R))}\nonumber\\
&  \leq c\log\frac{3R}{r}\left[  f\right]  _{BMO_{loc}\left(  B\left(
\overline{x},R;13R\right)  \right)  }+\frac{c}{\left\vert B\left(
\overline{x},R\right)  \right\vert ^{1/p}}\Vert f\Vert_{L^{p}\left(  B\left(
\overline{x},7R\right)  \right)  }. \label{4.1}%
\end{align}

Then (see also Lemma 2.4 in \cite{BB3})
\begin{align*}
&  \frac{1}{|B(x_{0},r)|}\int_{B(x_{0},r)}|\psi f-(\psi f)_{B(x_{0},r)}|dx\\
&  \leq\left\vert \frac{1}{|B(x_{0},r)|}\int_{B(x_{0},r)}|\psi f-(\psi
f)_{B(x_{0},r)}|dx-\frac{|f_{B(x_{0},r)}|}{|B(x_{0},r)|}\int_{B(x_{0},r)}%
|\psi-(\psi)_{B(x_{0},r)}|dx\right\vert \\
&  +\frac{|f_{B(x_{0},r)}|}{|B(x_{0},r)|}\int_{B(x_{0},r)}|\psi-(\psi
)_{B(x_{0},r)}|dx\\
&  \leq\frac{2}{|B(x_{0},r)|}\int_{B(x_{0},r)}|\psi||f-(f)_{B(x_{0},r)}|dx\\
&  +c\left\{  \log\frac{3R}{r}\left[  f\right]  _{BMO_{loc}\left(  B\left(
\overline{x},R;13R\right)  \right)  }+\frac{1}{|B\left(  \overline
{x},R\right)  |^{1/p}}\Vert f\Vert_{L^{p}\left(  B\left(  \overline
{x},7R\right)  \right)  }\right\}  \cdot\\
&  \cdot\frac{1}{|B(x_{0},r)|}\int_{B(x_{0},r)}|\psi-(\psi)_{B(x_{0},r)}|dx\\
&  \leq c\Vert\psi\Vert_{L^{\infty}}[f]_{BMO_{loc}\left(  B\left(
\overline{x},R;3R\right)  \right)  }+c\left[  \psi\right]  _{LMO_{loc}\left(
B\left(  \overline{x},R;3R\right)  \right)  }[f]_{BMO_{loc}\left(  B\left(
\overline{x},R;13R\right)  \right)  }\\
&  +c\left[  \psi\right]  _{LMO_{loc}\left(  B\left(  \overline{x}%
,R;3R\right)  \right)  }\frac{\Vert f\Vert_{L^{p}(B(\overline{x},7R))}%
}{|B(\overline{x},R)|^{1/p}}\,.
\end{align*}

\end{proof}

From Theorem \ref{Thm multiplication} we can derive also the following (see
also Lemma 4.12 in \cite{BB3}):

\begin{corollary}
\label{coroll molt smooth}Let $\psi\in C^{1}\left(  B\left(  x,R\right)
\right)  $ such that, for some $t<s<R,$ $\psi=1$ in $B\left(  x,t\right)  $,
$\psi=0$ outside $B\left(  x,s\right)  ,$ $\psi\leq1$ and $\left\vert
D\psi\right\vert \leq c/\left(  s-t\right)  $. Then for any $f\in
BMO_{loc}\left(  B\left(  \overline{x},R;KR\right)  \right)  $ one has%
\[
\left[  f\psi\right]  _{BMO_{loc}\left(  B\left(  \overline{x},R;3R\right)
\right)  }\leq\frac{C}{\left(  s-t\right)  }\left(  [f]_{BMO_{loc}\left(
B\left(  \overline{x},R;KR\right)  \right)  }+\frac{\Vert f\Vert
_{L^{p}(B(\overline{x},KR))}}{|B(\overline{x},R)|^{1/p}}\right)  .
\]

\end{corollary}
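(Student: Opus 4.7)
The plan is to deduce this corollary directly from Theorem \ref{Thm multiplication}, applied to the product $f\psi$. Since $\|\psi\|_{L^\infty(B(\overline{x},3R))}\leq 1$ by assumption, the task reduces to bounding $[\psi]_{LMO_{loc}(B(\overline{x},R;3R))}$ in terms of $1/(s-t)$, and then combining the two contributions in the multiplicative factor of Theorem \ref{Thm multiplication}.

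The core step is to estimate the mean oscillation of $\psi$ on an arbitrary ball $B(x_0,r)\subset B(\overline{x},3R)$ with $x_0\in B(\overline{x},R)$. From the Euclidean gradient bound $|D\psi|\leq c/(s-t)$ one obtains, on the bounded set $B(\overline{x},3R)$, a pointwise estimate $|X_i\psi|\leq C/(s-t)$ for the horizontal derivatives, since each $X_i$ is a linear combination of $\partial_{x_j}$ with polynomial coefficients that are bounded on $B(\overline{x},3R)$. A standard Poincaré-type inequality in the Carnot group (or a direct integration of $\psi$ along sub-unit paths joining points of the metric ball) then gives
\[
\frac{1}{|B(x_0,r)|}\int_{B(x_0,r)}|\psi(y)-\psi_{B(x_0,r)}|\,dy\leq \frac{Cr}{s-t}.
\]
Multiplying by the logarithmic weight and taking the supremum over $r\in(0,3R]$, the elementary bound $r(1+\log(3R/r))\leq CR$ on that interval yields
\[
[\psi]_{LMO_{loc}(B(\overline{x},R;3R))}\leq \frac{CR}{s-t}.
\]

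Inserting this together with $\|\psi\|_{L^\infty}\leq 1$ into Theorem \ref{Thm multiplication} gives the multiplicative constant
\[
\|\psi\|_{L^\infty(B(\overline{x},3R))}+[\psi]_{LMO_{loc}(B(\overline{x},R;3R))}\leq 1+\frac{CR}{s-t}\leq \frac{C'}{s-t},
\]
where in the last step we used $s-t<R$ (so that $1\leq R/(s-t)$), with $C'$ absorbing the fixed factor $R$ into the constants. This matches the claimed estimate. The only technical point worth highlighting is the passage from the Euclidean gradient bound on $\psi$ to oscillation control on sub-Riemannian balls; this step is routine in the Carnot group setting via the Poincaré inequality, and is essentially the content of Lemma 4.12 in \cite{BB3} which the statement explicitly references.
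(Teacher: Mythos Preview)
Your argument is correct and follows exactly the route the paper indicates: deduce the corollary from Theorem~\ref{Thm multiplication} by bounding $\|\psi\|_{L^\infty}\leq 1$ and $[\psi]_{LMO_{loc}(B(\overline{x},R;3R))}\leq CR/(s-t)$ via the gradient bound and a Poincar\'e-type estimate, precisely in the spirit of \cite[Lemma~4.12]{BB3}. The only cosmetic point is that the factor $R$ you absorb into $C$ is legitimate here because all balls lie in the fixed bounded domain $\Omega$, so $R$ is bounded by $\mathrm{diam}\,\Omega$.
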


The proof of the following John-Nirenberg Theorem is similar to the proof of
Theorem A in \cite{MMNO}.

\begin{theorem}
\label{JN}There exist positive constants $c_{1}$,$c_{2}$ and $\alpha>1$ such
that for any $f\in BMO_{loc}(B(\overline{x},R;\alpha R))$, any ball
$B(x_{0},r)\subset B(\overline{x},3R)$ with $x_{0}\in B(\overline{x},R)$, and
$\forall\lambda>0$, we have%
\[
\left\vert \{x\in B:\left\vert f(x)-f_{B}\right\vert >\lambda\}\right\vert
\leq c_{1}\exp(-c_{2}\lambda/[f]_{BMO_{loc}(B(\overline{x},R;\alpha
R))})|B|\,.
\]

\end{theorem}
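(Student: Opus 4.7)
The plan is to adapt the classical Calder\'on--Zygmund stopping-time proof of John--Nirenberg to the local setting, as carried out in [MMNO]. After normalizing so that $[f]_{BMO_{loc}(B(\overline{x},R;\alpha R))}=1$, we want, for every admissible ball $B=B(x_{0},r)\subset B(\overline{x},3R)$ with $x_{0}\in B(\overline{x},R)$, an exponential bound on
\[
\mu_{B}(\lambda)=\frac{|\{x\in B:|f(x)-f_{B}|>\lambda\}|}{|B|}.
\]

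First I would fix a threshold $b>1$ (to be chosen) and perform a Vitali-type stopping-time decomposition inside $B$ at level $b$, applied to the function $f-f_{B}$. A standard covering argument produces a countable disjoint family $\{B_{j}\}$ of subballs whose fixed dilates cover the stopping set $\{\mathcal{M}_{B}(f-f_{B})>b\}$, such that each $B_{j}$ and its relevant dilate are contained in $B(\overline{x},\alpha R)$, the averaged bound $\frac{1}{|B_{j}|}\int_{B_{j}}|f-f_{B}|\leq\overline{c}\,b$ holds with $\overline{c}=\overline{c}(G)$ (by doubling loss from the stopping parent), and on $B\setminus\bigcup_{j}B_{j}$ the Lebesgue differentiation theorem gives $|f-f_{B}|\leq b$ a.e.

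Second, the disjointness of $\{B_{j}\}$ combined with the $BMO_{loc}$ bound on the ball $B$ itself yields the measure estimate $\sum_{j}|B_{j}|\leq\frac{C_{0}}{b}|B|$ with $C_{0}=C_{0}(G)$. Moreover, on each $B_{j}$ one gets $|f_{B_{j}}-f_{B}|\leq c_{3}\,b$ from a telescoping chain exactly like the one used in the proof of Theorem~\ref{Thm multiplication}. Consequently, for any $\lambda>(c_{3}+1)b$,
\[
\{x\in B:|f(x)-f_{B}|>\lambda\}\subset\bigcup_{j}\{x\in B_{j}:|f(x)-f_{B_{j}}|>\lambda-c_{3}b\}
\]
up to a null set. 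Setting $\Phi(\lambda)=\sup\mu_{B}(\lambda)$, where the supremum is over admissible balls $B$, this gives the recursion $\Phi(\lambda)\leq\frac{C_{0}}{b}\,\Phi(\lambda-c_{3}b)$.

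Third, I would choose $b$ large enough that $C_{0}/b\leq 1/2$ and iterate, obtaining $\Phi(kc_{3}b)\leq 2^{-k}$ for every positive integer $k$; this gives the exponential decay with $c_{2}=(\log 2)/(c_{3}b)$, while $c_{1}$ absorbs the trivial range $\lambda\leq c_{3}b$.

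The main obstacle, and the reason why the statement requires the auxiliary parameter $\alpha>1$ rather than working purely inside $B(\overline{x},3R)$, is that the stopping balls $B_{j}$ together with the Vitali dilations and any further enlargement needed to invoke the $BMO_{loc}$ average bound must all remain inside $B(\overline{x},\alpha R)$. Since admissible $B$ can have radius comparable to $R$, and the covering/quasi-triangle/doubling constants for the distance $d$ on $G$ produce a fixed dilation factor, one must fix $\alpha$ as an absolute constant depending only on $G$, chosen once at the outset so that every ball entering the argument lies safely inside the admissible region.
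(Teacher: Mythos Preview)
Your overall strategy is exactly the one the paper points to (it simply refers the reader to the proof of Theorem~A in \cite{MMNO}), so you are on the right track. The gap is in the recursion. You set $\Phi(\lambda)=\sup_{B}\mu_{B}(\lambda)$ with the supremum over ``admissible'' balls $B$ (centre in $B(\overline{x},R)$, contained in $B(\overline{x},3R)$) and then claim $\Phi(\lambda)\le(C_{0}/b)\,\Phi(\lambda-c_{3}b)$. For that inequality you need each stopping ball $B_{j}$ to be itself admissible, so that $\mu_{B_{j}}(\lambda-c_{3}b)\le\Phi(\lambda-c_{3}b)$. But the centre of $B_{j}$ lies in $B\subset B(\overline{x},3R)$, not necessarily in $B(\overline{x},R)$; hence $B_{j}$ is in general \emph{not} admissible, and the recursion does not close as written. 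Your final paragraph correctly flags the need for the $B_{j}$ and their Vitali dilates to remain \emph{inside} $B(\overline{x},\alpha R)$; the obstruction you miss is the constraint on their \emph{centres}.

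This is not merely cosmetic: the seminorm $[f]_{BMO_{loc}(B(\overline{x},R;\alpha R))}$ by definition only controls oscillations over balls centred in $B(\overline{x},R)$, and enlarging $\alpha$ does nothing for that restriction. Without a uniform bound on $\frac{1}{|B_{j}|}\int_{B_{j}}|f-f_{B_{j}}|$ you cannot get the factor $C_{0}/b<1$ at the second and later generations (the stopping estimate alone gives a bound of order $b$, which produces only polynomial decay after iteration). To repair the argument you must enlarge the family over which $\Phi$ is taken so that it is stable under the stopping construction---say, balls with centre in $B(\overline{x},3R)$ and contained in $B(\overline{x},\alpha R)$---and then explain carefully why the hypothesis gives a uniform $BMO$ bound on every ball in this larger family, so that each generation of the iteration, not only the first one on the original $B$, starts from a controlled mean oscillation.
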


In a standard way it is also possible to prove

\begin{corollary}
\label{coroll JN}Let $f\in BMO_{loc}(B(\overline{x},R;\alpha R))$ and
$1<p<+\infty$. Then there exists a constant $c=c(p)$ such that
\begin{align*}
\sup_{x\in B(\overline{x},R),B\left(  x,r\right)  \subset B(\overline{x},3R)}
&  \left(  \frac{1}{|B\left(  x,r\right)  |}{\int}_{B\left(  x,r\right)
}|f(y)-f_{B\left(  x,r\right)  }|^{p}dy\right)  ^{1/p}\leq\\
&  \leq c[f]_{BMO_{loc}(B(\overline{x},R;\alpha R))}%
\end{align*}
with $\alpha>1$ as in the previous theorem.
\end{corollary}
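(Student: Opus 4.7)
The plan is to derive this corollary from the John--Nirenberg Theorem \ref{JN} by the standard layer-cake (distribution function) argument. Fix a ball $B=B(x,r)\subset B(\overline{x},3R)$ with $x\in B(\overline{x},R)$, and set $M=[f]_{BMO_{loc}(B(\overline{x},R;\alpha R))}$. I would write
\[
\frac{1}{|B|}\int_{B}|f(y)-f_{B}|^{p}\,dy
= \frac{p}{|B|}\int_{0}^{\infty}\lambda^{p-1}\,\bigl|\{y\in B:|f(y)-f_{B}|>\lambda\}\bigr|\,d\lambda
\]
and then apply Theorem \ref{JN} to the inner set to bound it by $c_{1}|B|\exp(-c_{2}\lambda/M)$. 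The factor $|B|$ cancels, leaving
\[
\frac{1}{|B|}\int_{B}|f-f_{B}|^{p}\,dy \le p\,c_{1}\int_{0}^{\infty}\lambda^{p-1}\exp(-c_{2}\lambda/M)\,d\lambda
= p\,c_{1}\,\Gamma(p)\,\Bigl(\frac{M}{c_{2}}\Bigr)^{p}.
\]
Taking $p$-th roots gives $c(p)\,M$, uniformly in the choice of ball $B(x,r)$ as in the supremum. Taking the supremum over all such balls yields the claim.

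The only subtlety is checking that Theorem \ref{JN} applies uniformly to every ball $B(x,r)$ appearing in the supremum; but the hypothesis of the corollary is exactly that $x\in B(\overline{x},R)$ and $B(x,r)\subset B(\overline{x},3R)$, which matches verbatim the hypothesis under which Theorem \ref{JN} was stated, with the same seminorm $[f]_{BMO_{loc}(B(\overline{x},R;\alpha R))}$ on the right-hand side. Thus no enlargement of the ambient ball is needed, and the constant $\alpha>1$ produced here is the same one provided by Theorem \ref{JN}.

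There is essentially no obstacle: the whole proof is a direct adaptation of the classical John--Nirenberg $\Rightarrow L^{p}$-oscillation implication, and the localized nature of the spaces $BMO_{loc}(B(\overline{x},R;\alpha R))$ plays no role beyond guaranteeing that the sup on the left and the sup implicit in the seminorm on the right are taken over compatible families of balls.
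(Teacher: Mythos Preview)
Your proof is correct and is exactly the standard layer-cake argument the paper alludes to; indeed, the paper does not write out a proof at all, merely stating that the corollary follows ``in a standard way'' from Theorem \ref{JN}. (A trivial caveat: when $M=0$ the inequality is immediate, so one may assume $M>0$ before dividing by it.)
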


We now state and prove some further preliminary results which will be useful
in the next subsection.

\begin{lemma}
\label{Lem1}Let $f\in BMO_{loc}\left(  B\left(  \overline{x},R;5R\right)
\right)  $ and $B=B(x_{0},r)\subset B\left(  \overline x,3R\right)  ,$
$x_{0}\in B\left(  \overline{x},R\right)  $. Then $\forall\beta>0$
\[
\int_{B\left(  \overline{x},R\right)  \setminus B\left(  x_{0},2r\right)
}\frac{|f(y)-f_{B}|}{d(x_{0},y)^{Q+\beta}}dy\leq\frac{C}{r^{\beta}%
}[f]_{BMO_{loc}\left(  B\left(  \overline{x},R;5R\right)  \right)  }\text{,}%
\]
where $C$ depends only on $Q$ and $\beta$.
\end{lemma}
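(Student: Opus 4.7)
The approach is the classical dyadic annular decomposition around $x_0$, combined with a telescoping control of ball averages of $f$ at different scales. The only subtlety is that we must make sure every ball that appears in the argument is of the form admitted by the $BMO_{loc}(B(\overline{x},R;5R))$ seminorm, i.e.\ centered in $B(\overline{x},R)$ and contained in $B(\overline{x},5R)$.

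First, since $x_0\in B(\overline{x},R)$ and $y\in B(\overline{x},R)$, the triangle inequality gives $d(x_0,y)<2R$. Hence, writing $A_k=\{y\in B(\overline{x},R):2^k r\le d(x_0,y)<2^{k+1}r\}$, we have $A_k=\emptyset$ for $2^k r\ge 2R$. For the nontrivial indices $k\ge 1$ one has $2^{k+1}r<4R$, so the enlarged ball $B(x_0,2^{k+1}r)$ satisfies $B(x_0,2^{k+1}r)\subset B(\overline{x},5R)$ by the triangle inequality, and is therefore an admissible ball for the seminorm $[f]_{BMO_{loc}(B(\overline{x},R;5R))}$.

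The main step is then the dyadic bound
\begin{equation*}
\int_{B(\overline{x},R)\setminus B(x_0,2r)}\frac{|f(y)-f_B|}{d(x_0,y)^{Q+\beta}}\,dy\le \sum_{k\ge 1}\frac{1}{(2^k r)^{Q+\beta}}\int_{B(x_0,2^{k+1}r)}|f(y)-f_B|\,dy,
\end{equation*}
together with the standard telescoping estimate
\begin{equation*}
|f_{B(x_0,2^{k+1}r)}-f_B|\le \sum_{j=0}^{k}|f_{B(x_0,2^{j+1}r)}-f_{B(x_0,2^j r)}|\le c\,(k+1)\,[f]_{BMO_{loc}(B(\overline{x},R;5R))},
\end{equation*}
where the second inequality uses $|B(x_0,2^{j+1}r)|/|B(x_0,2^j r)|=2^Q$ and the fact that each pair of consecutive balls is admissible for the local seminorm by the remark above. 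Inserting this into the integral of $|f(y)-f_B|$ over $B(x_0,2^{k+1}r)$ and splitting $f(y)-f_B=(f(y)-f_{B(x_0,2^{k+1}r)})+(f_{B(x_0,2^{k+1}r)}-f_B)$ yields
\begin{equation*}
\int_{B(x_0,2^{k+1}r)}|f(y)-f_B|\,dy\le c\,(k+1)\,(2^{k+1}r)^Q\,[f]_{BMO_{loc}(B(\overline{x},R;5R))}.
\end{equation*}

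Putting these pieces together gives
\begin{equation*}
\int_{B(\overline{x},R)\setminus B(x_0,2r)}\frac{|f(y)-f_B|}{d(x_0,y)^{Q+\beta}}\,dy\le \frac{c\,[f]_{BMO_{loc}(B(\overline{x},R;5R))}}{r^\beta}\sum_{k\ge 1}\frac{k+1}{2^{k\beta}},
\end{equation*}
and the geometric series converges because $\beta>0$, producing the constant $C=C(Q,\beta)$ in the statement. The only step requiring care is the bookkeeping to verify that every ball used in the telescoping is admissible for the local $BMO$ seminorm; this is why the hypothesis involves the enlargement factor $5$ rather than $3$.
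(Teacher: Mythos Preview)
Your proof is correct and follows essentially the same approach as the paper: a dyadic annular decomposition of the domain around $x_0$, the telescoping estimate $|f_{B_{k+1}}-f_B|\le c(k+1)[f]_{BMO_{loc}}$, and summation of the resulting series $\sum_k (k+1)2^{-k\beta}$. Your write-up is in fact slightly more careful than the paper's in explicitly verifying that each ball $B(x_0,2^{k+1}r)$ used in the telescoping is admissible for the seminorm $[f]_{BMO_{loc}(B(\overline{x},R;5R))}$.
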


\begin{proof}
Set $B_{k}=B(x_{0},2^{k}r)$, $k=0,1,2,..,n,$ with $2^{n}r\leq2R<2^{n+1}r.$%
\begin{align*}
\left\vert f_{B_{k+1}}-f_{B_{k}}\right\vert  &  =\left\vert \frac{1}{|B_{k}%
|}\int_{B_{k}}(f-f_{B_{k+1}})dy\right\vert \\
&  \leq\frac{2^{Q}}{|B_{k+1}|}{\int}_{B_{k+1}}|f(y)-f_{B_{k+1}}|dy\leq
2^{Q}[f]_{BMO_{loc}\left(  B\left(  \overline{x},R;5R\right)  \right)  }\,,
\end{align*}
from which
\begin{equation}
\left\vert f_{B_{k+1}}-f_{B_{0}}\right\vert \leq(k+1)2^{Q}[f]_{BMO_{loc}%
\left(  B\left(  \overline{x},R;5R\right)  \right)  }\,. \label{4.7.1}%
\end{equation}

Then
\begin{align*}
&  \int_{B\left(  \overline{x},R\right)  \setminus B\left(  x_{0},2r\right)
}\frac{|f(y)-f_{B}|}{d(x_{0},y)^{Q+\beta}}dy\leq\int_{B\left(  x_{0}%
,2^{n+1}r\right)  \setminus B\left(  x_{0},2r\right)  }\frac{|f(y)-f_{B}%
|}{d(x_{0},y)^{Q+\beta}}dy\\
&  =\sum_{k=1}^{n}\int_{B_{k+1}\setminus B_{k}}\frac{|f(y)-f_{B}|}%
{d(x_{0},y)^{Q+\beta}}dy\leq\sum_{k=1}^{n}\int_{B_{k+1}}\frac{|f(y)-f_{B}%
|}{(r2^{k})^{Q+\beta}}dy
\end{align*}%
\begin{align}
&  \leq\sum_{k=1}^{n}\frac{c}{(r2^{k})^{\beta}}\frac{1}{\left\vert
B_{k+1}\right\vert }{\int}_{B_{k+1}}|f(y)-f_{B}|dy\label{Lem dis1}\\
&  \leq\sum_{k=1}^{n}\frac{c}{(r2^{k})^{\beta}}([f]_{BMO_{loc}\left(  B\left(
\overline{x},R;5R\right)  \right)  }+|f_{B_{k+1}}-f_{B}|)\nonumber
\end{align}
by (\ref{4.7.1})%
\begin{equation}
\leq\sum_{k=1}^{+\infty}\frac{c}{(r2^{k})^{\beta}}(2+k)[f]_{BMO_{loc}\left(
B\left(  \overline{x},R;5R\right)  \right)  }\leq\frac{c}{r^{\beta}%
}\,[f]_{BMO_{loc}\left(  B\left(  \overline{x},R;5R\right)  \right)
}.\nonumber
\end{equation}

\end{proof}

\begin{lemma}
\label{2.4}Let $f$ be in $LMO_{loc}\left(  B\left(  \overline{x},R;5R\right)
\right)  $ and $B=B(x_{0},r)\subset B\left(  \overline{x},3R\right)  ,$
$x_{0}\in B\left(  \overline{x},R\right)  $. Then $\forall\beta>0$%
\[
\int_{B\left(  \overline{x},R\right)  \setminus B\left(  x_{0},2r\right)
}\frac{|f(y)-f_{B}|}{d(x_{0},y)^{Q+\beta}}dy\leq\frac{C}{r^{\beta}\left(
1+\log\frac{5R}{r}\right)  }[f]_{LMO_{loc}\left(  B\left(  \overline
{x},R;5R\right)  \right)  }\text{,}%
\]
where $C$ depends only on $Q$ and $\beta$.
\end{lemma}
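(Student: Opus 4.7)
The plan is to imitate the dyadic-annulus decomposition used in the proof of Lemma~\ref{Lem1}, but at every scale to replace the $BMO$-oscillation bound by the stronger $LMO$-bound; the extra factor $(1+\log(5R/r))^{-1}$ in the conclusion will come precisely from this replacement.

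First I would set $B_k = B(x_0, 2^k r)$ for $k = 0, 1, \ldots, n+1$, with $n$ chosen exactly as in Lemma~\ref{Lem1} so that $2^n r \leq 2R < 2^{n+1} r$; this ensures every $B_k$ is contained in $B(\overline{x}, 5R)$. The same dyadic estimate that produced \eqref{Lem dis1} then gives
$$\int_{B(\overline{x},R)\setminus B(x_0,2r)} \frac{|f(y) - f_B|}{d(x_0,y)^{Q+\beta}}\, dy \leq c \sum_{k=1}^{n} \frac{1}{(r 2^k)^\beta} \bigl(\omega_{k+1} + |f_{B_{k+1}} - f_{B_0}|\bigr),$$
where $\omega_{j} := |B_j|^{-1}\int_{B_j} |f - f_{B_j}|\, dy$. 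At each scale $j\in\{1,\ldots,n+1\}$ the defining inequality of $LMO_{loc}$ provides $\omega_j \leq [f]_{LMO_{loc}}/L_j$, where $L_j := 1 + \log(\mathrm{diam}\,B(\overline{x},5R)/(2^j r))$, and a telescopic estimate then yields $|f_{B_{k+1}} - f_{B_0}| \leq 2^Q[f]_{LMO_{loc}} \sum_{j=1}^{k+1} L_j^{-1}$. Plugging these bounds back and swapping the order of the two summations reduces the lemma to the arithmetic inequality
$$S := \sum_{j=1}^{n+1} \frac{1}{L_j\, 2^{j\beta}} \leq \frac{c}{1 + \log(5R/r)}.$$

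The final step is to establish this bound on $S$ by splitting the range at $j = \lfloor n/2 \rfloor$. For $j \leq n/2$, a direct computation using $n\log 2 \sim \log(R/r)$ shows that $L_j$ is comparable to $L := 1+\log(5R/r)$, so this piece contributes at most $cL^{-1}\sum_j 2^{-j\beta} \leq c/L$. For $j > n/2$, the $L_j$ may deteriorate down to a constant, but the geometric decay $2^{-j\beta} \leq 2^{-n\beta/2}$ combined with $L\lesssim n$ yields a contribution which is easily $\ll 1/L$ (exponential in $n$ beats the linear factor). The main (and really only) technical obstacle is this last arithmetic estimate; everything else is a mechanical adaptation of Lemma~\ref{Lem1}.
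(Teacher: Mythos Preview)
Your proposal is correct and follows essentially the same route as the paper: both decompose into dyadic annuli $B_{k+1}\setminus B_k$, replace the $BMO$ bounds of Lemma~\ref{Lem1} by $LMO$ bounds at each scale, and finish with an elementary arithmetic estimate showing the remaining sum is $O\bigl((1+\log(5R/r))^{-1}\bigr)$. The only difference is cosmetic: the paper uses the cruder telescopic bound $|f_{B_{k+1}}-f_{B_0}|\le c\frac{k+1}{n-k+1}[f]_{LMO}$ and then the identity $\frac{n+1}{n-k+1}=1+\frac{k}{n-k+1}\le k+1$ to conclude directly that $\sum_{k=1}^n \frac{k+2}{2^{k\beta}(n-k+1)}\le \frac{c_\beta}{n+1}$, whereas you keep the full telescopic sum, swap the order of summation, and split at $j=\lfloor n/2\rfloor$; both arguments are equally short and yield the same conclusion.
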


\begin{proof}
The proof is similar to that of (\ref{4.1}). With the same notation, we have%
\[
\left\vert f_{B_{k+1}}-f_{B_{k}}\right\vert \leq\frac{2^{Q}}{\log\frac
{10R}{2^{k+1}r}}[f]_{LMO_{loc}\left(  B\left(  \overline{x},R;5R\right)
\right)  }=\frac{c}{n-k+1}[f]_{LMO_{loc}\left(  B\left(  \overline
{x},R;5R\right)  \right)  }%
\]
from which%
\[
\left\vert f_{B_{k+1}}-f_{B_{0}}\right\vert \leq c\frac{k+1}{n-k+1}%
[f]_{LMO_{loc}\left(  B\left(  \overline{x},R;5R\right)  \right)  }\,.
\]
Then (\ref{Lem dis1}) gives, recalling that $2^{n}r\leq2R<2^{n+1}r$%

\begin{align*}
&  \int_{B\left(  \overline{x},R\right)  \setminus B\left(  x_{0},2r\right)
}\frac{|f(y)-f_{B}|}{d(x_{0},y)^{Q+\beta}}dy\leq\\
&  \leq\sum_{k=1}^{n}\frac{c}{(r2^{k})^{\beta}}\left(  \frac{1}{\left\vert
B_{k+1}\right\vert }{\int}_{B_{k+1}}|f(y)-f_{B_{k+1}}|dy+|f_{B_{k+1}}%
-f_{B_{0}}|\right) \\
&  \leq\sum_{k=1}^{n}\frac{c}{(r2^{k})^{\beta}}\left(  \frac{1}{1+\log\left(
\frac{5R}{2^{k}r}\right)  }+\frac{k+1}{n-k+1}\right)  [f]_{LMO_{loc}\left(
B\left(  \overline{x},R;5R\right)  \right)  }\\
&  \leq\frac{c}{r^{\beta}}[f]_{LMO_{loc}\left(  B\left(  \overline
{x},R;5R\right)  \right)  }\sum_{k=1}^{n}\frac{1}{2^{k\beta}}\left(
\frac{k+2}{n-k+1}\right) \\
&  \leq\frac{c}{r^{\beta}\left(  1+\log\left(  \frac{5R}{r}\right)  \right)
}[f]_{LMO_{loc}\left(  B\left(  \overline{x},R;5R\right)  \right)  }\sum
_{k=1}^{n}\frac{n+1}{2^{k\beta}}\left(  \frac{k+2}{n-k+1}\right) \\
&  \leq\frac{C}{r^{\beta}\left(  1+\log\frac{5R}{r}\right)  }[f]_{LMO_{loc}%
\left(  B\left(  \overline{x},R;5R\right)  \right)  }%
\end{align*}
since%
\begin{align*}
\sum_{k=1}^{n}\frac{n+1}{2^{k\beta}}\left(  \frac{k+2}{n-k+1}\right)   &
=\sum_{k=1}^{n}\frac{k+2}{2^{k\beta}}\left(  1+\frac{k}{n-k+1}\right) \\
&  \leq\sum_{k=1}^{\infty}\frac{(k+2)\left(  k+1\right)  }{2^{k\beta}%
}=c_{\beta}\text{.}%
\end{align*}

\end{proof}

\subsection{Estimates for singular integrals of convolution
type\label{subsec singular convolution}}

In this section we prove the $BMO$-type estimates for singular integrals and
their commutators. Here we deal with the convolution kernels $H_{km}$ defined
in (\ref{spher har kern}). The explicit form of the kernel is not important;
it will be enough to point out the relevant properties which we will use. So,
let
\[
k\left(  x,y\right)  =k_{0}\left(  y^{-1}\circ x\right)
\]
be one of our singular integral kernels, defined in the whole $\mathbb{R}^{N}%
$. The following properties hold true (see \cite[Prop. 1]{bb1}):
\begin{align}
\left\vert k(x,y)\right\vert  &  \leq\frac{A}{d\left(  x,y\right)  ^{Q}}\text{
\ }\forall x,y\in{\mathbb{R}}^{N}\label{standard 1}\\
\left\vert k(x,y)-k(x_{0},y)\right\vert +\left\vert k(y,x)-k(y,x_{0}%
)\right\vert  &  \leq B\frac{d(x_{0},x)}{d(x_{0},y)^{Q+1}} \label{standard 2}%
\end{align}
for any $x_{0},x,y\in\mathbb{R}^{N}$ with $d(x_{0},y)\geq2d(x_{0},x)$.%
\[
\int_{r_{1}<d\left(  x,y\right)  <r_{2}}k\left(  x,y\right)  dy=0=\int
_{r_{1}<d\left(  x,y\right)  <r_{2}}k\left(  y,x\right)  dy
\]
for any $0<r_{1}<r_{2}<\infty$.

The last property requires some comments. It is known that the integral of
$Y_{km}\left(  y\right)  $ over $\Sigma_{N}$, for $m\geq1$, is zero. This also
implies that
\[
\int_{r_{1}<\rho\left(  x,y\right)  <r_{2}}k\left(  x,y\right)  dy=0=\int
_{r_{1}<\rho\left(  x,y\right)  <r_{2}}k\left(  y,x\right)  dy
\]
for any $0<r_{1}<r_{2}<\infty$, since $B_{\rho}\left(  0,r\right)  =D\left(
r\right)  B_{\rho}\left(  0,1\right)  =D\left(  r\right)  B_{E}\left(
0,1\right)  $. It is less obvious that this vanishing property still holds
with respect to $d$-balls. However, this is true in view of the homogeneity of
$d$ (see Proposition \ref{Prop homogeneous distance}), that is a homogeneous norm.

Let now%
\[
Tf\left(  x\right)  =P.V.\int_{\mathbb{R}^{N}}k\left(  x,y\right)  f\left(
y\right)  dy.
\]
All the quantitative estimates that we will prove in this section on the
operator $T$ will depend on $k$ only through the numbers $A,B$ in
(\ref{standard 1})-(\ref{standard 2}). We will show in the next section how to
quantify this dependence in the case of our concrete kernels $H_{km}$.

\bigskip

Throughout this section, let $B\left(  \overline{x},R\right)  $ be a fixed
$d$-ball such that $B\left(  \overline{x},KR\right)  \subset\Omega$ for some
large $K>0$ which will be chosen later. We are interested in studying
$Tf\left(  x\right)  $ and its commutator for sprt$f\subset B\left(
\overline{x},R\right)  $ and $x\in B\left(  \overline{x},3R\right)  ,$ hence
for $d\left(  x,y\right)  <4R$. So, let $\psi\left(  x,y\right)  =\psi
_{0}\left(  y^{-1}\circ x\right)  $ be a cutoff function such that%
\[
B\left(  0,4R\right)  \prec\psi_{0}\prec B\left(  0,5R\right)  .
\]
Hence for sprt$f\subset B\left(  \overline{x},R\right)  $ and $x\in B\left(
\overline{x},3R\right)  $,%
\[
Tf\left(  x\right)  =\int_{\mathbb{R}^{N}}k\left(  x,y\right)  f\left(
y\right)  dy=\int_{\mathbb{R}^{N}}k\left(  x,y\right)  \psi\left(  x,y\right)
f\left(  y\right)  dy\equiv\widetilde{T}f\left(  x\right)  \text{.}%
\]
We will also let%
\[
\widetilde{k}\left(  x,y\right)  =k\left(  x,y\right)  \psi\left(  x,y\right)
\text{.}%
\]
Note that for any $x\in B\left(  \overline{x},3R\right)  ,g\in L_{loc}%
^{1}\left(  \mathbb{R}^{N}\right)  $ we have%
\[
\widetilde{T}g\left(  x\right)  =\int_{B\left(  \overline{x},8R\right)
}k\left(  x,y\right)  \psi\left(  x,y\right)  g\left(  y\right)  dy\text{.}%
\]
Note that the cancellation property%
\[
\int_{r_{1}<d\left(  x,y\right)  <r_{2}}k\left(  x,y\right)  dy=0\text{
\ }\forall r_{1}<r_{2}%
\]
implies that%
\[
\int_{r_{1}<d\left(  x,y\right)  <r_{2}}k\left(  x,y\right)  \psi\left(
x,y\right)  dy=0\text{ \ }\forall r_{1}<r_{2}%
\]
and so%
\begin{equation}
\widetilde{T}\left(  c\right)  =0\text{ in }B\left(  \overline{x},3R\right)
\text{, for any constant }c\text{.} \label{T(c)=0}%
\end{equation}

Now, for $b\in LMO_{loc}\left(  B\left(  \overline{x},R;KR\right)  \right)  $,
$f\in C_{0}^{\infty}\left(  B\left(  \overline{x},R\right)  \right)  ,x\in
B\left(  \overline{x},3R\right)  $%
\begin{align*}
T_{b}f\left(  x\right)   &  =\left[  T,b\right]  \left(  f\right)  \left(
x\right)  =T\left(  bf\right)  \left(  x\right)  -b\left(  x\right)  Tf\left(
x\right) \\
&  =\int_{\mathbb{R}^{N}}k\left(  x,y\right)  \left[  b\left(  y\right)
-b\left(  x\right)  \right]  f\left(  y\right)  dy\\
&  =\left[  \widetilde{T},b\right]  \left(  f\right)  \left(  x\right)
\equiv\widetilde{T}_{b}f\left(  x\right)  .
\end{align*}
Note that for $x\in B\left(  \overline{x},3R\right)  $ and $g\in L_{loc}%
^{1}\left(  \mathbb{R}^{N}\right)  $ we have%
\[
\left[  \widetilde{T},b\right]  \left(  g\right)  \left(  x\right)
=\int_{B\left(  \overline{x},8R\right)  }k\left(  x,y\right)  \psi\left(
x,y\right)  \left[  b\left(  y\right)  -b\left(  x\right)  \right]  g\left(
y\right)  dy
\]
which is meaningful provided $b$ is defined in $B\left(  \overline
{x},8R\right)  $ (hence,\ we will pick $K\geq8$).

The aim of the previous definitions is the following: on the one hand, we want
to define a \textquotedblleft local\textquotedblright\ commutator, without the
necessity of extending the function $b$ to the whole space $\mathbb{R}^{N}$;
but, on the other hand, we need to preserve the strong cancellation property
(\ref{T(c)=0}), which will be essential in the sequel.

\begin{theorem}
\label{2.7}Let $T$ be a singular integral operator as before. There exists an
absolute constant $K>3$ such that for any ball $B\left(  \overline
{x},KR\right)  \subset\Omega$ we have%
\begin{align*}
\lbrack Tf]_{BMO_{loc}\left(  B\left(  \overline{x},R;3R\right)  \right)  }
&  \leq c[f]_{BMO_{loc}\left(  B\left(  \overline{x},R;KR\right)  \right)
};\\
\lbrack Tf]_{LMO_{loc}\left(  B\left(  \overline{x},R;3R\right)  \right)  }
&  \leq c[f]_{LMO_{loc}\left(  B\left(  \overline{x},R;KR\right)  \right)  }%
\end{align*}
for any $f\in BMO_{loc}\left(  B\left(  \overline{x},R;KR\right)  \right)  $
(or $LMO_{loc}$, respectively) with sprt$f\subset B\left(  \overline
{x},R\right)  $, some constant $c$ independent of $R$ and $f$.
\end{theorem}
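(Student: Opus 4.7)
The plan is to mimic the classical proof that Calder\'on--Zygmund operators on spaces of homogeneous type send $BMO$ to $BMO$, while keeping careful track of the cutoff $\psi$ which makes $\widetilde T$ a \emph{local} operator enjoying the strong cancellation $\widetilde T(c)\equiv 0$ on $B(\overline x,3R)$. Fix a ball $B=B(x_0,r)$ with $x_0\in B(\overline x,R)$ and $B\subset B(\overline x,3R)$, and write $B^*=B(\overline x,R)$. Since $\mathrm{sprt}\,f\subset B^*$ and $\psi\equiv 1$ on $\{d(x,y)\le 4R\}$, one has $Tf(x)=\widetilde Tf(x)$ for $x\in B(\overline x,3R)$; subtracting the constant $f_{2B}$ inside $\widetilde T$, permissible by \eqref{T(c)=0}, gives $\widetilde Tf(x)=\widetilde T(f-f_{2B})(x)$. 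Splitting $f-f_{2B}=(f-f_{2B})\chi_{2B}+(f-f_{2B})\chi_{(2B)^c}$ and taking $c_B:=\widetilde T((f-f_{2B})\chi_{(2B)^c})(x_0)$ produces the standard decomposition $Tf(x)-c_B=I(x)+II(x)$, where $II(x)=\int_{(2B)^c}[k\psi(x,y)-k\psi(x_0,y)](f(y)-f_{2B})\,dy$.

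For the local piece, $I(x)=T((f-f_{2B})\chi_{2B})(x)$; H\"older's inequality, the $L^p$--boundedness of the convolution singular integral $T$ (which follows from \eqref{standard 1}--\eqref{standard 2} by standard Calder\'on--Zygmund theory in spaces of homogeneous type), and Corollary \ref{coroll JN} combine to give $\frac{1}{|B|}\int_B|I|\,dx\le C[f]_{BMO_{loc}(B(\overline x,R;KR))}$ for $K$ sufficiently large. For the far piece, the estimate \eqref{standard 2} together with the Lipschitz bound $|\psi(x,y)-\psi(x_0,y)|\le c\,d(x,x_0)/R$ (coming from the scale of $\psi$) gives $|k\psi(x,y)-k\psi(x_0,y)|\le c\,r/d(x_0,y)^{Q+1}$ whenever $d(x_0,y)\ge 2r$, and I would split $II=II_1+II_2$ according to whether $y\in B^*$ or $y\notin B^*$. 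On $(2B)^c\cap B^*$, Lemma \ref{Lem1} with $\beta=1$ bounds $|II_1(x)|$ uniformly in $x\in B$ by $c[f]_{BMO_{loc}(B(\overline x,R;5R))}$.

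The genuinely delicate term is the boundary contribution $II_2$, supported in $(2B)^c\cap(B^*)^c$: there $(f-f_{2B})\chi_{(2B)^c}$ reduces to the constant $-f_{2B}$, and integrating the kernel estimate over the annulus $\{c_1R\le d(x_0,y)\le c_2R\}$ gives only $|II_2(x)|\le c(r/R)|f_{2B}|$. This is the sole step where the compact support of $f$ must really be exploited: I would follow \eqref{4.1} to obtain $|f_{2B}|\le c(1+\log(R/r))[f]_{BMO_{loc}}+c\|f\|_{L^p(B^*)}/|B^*|^{1/p}$, and then absorb $\|f\|_{L^p(B^*)}$ into $[f]_{BMO_{loc}}$ by the standard argument exploiting $\mathrm{sprt}\,f\subset B^*$ (comparing $f_{B(\overline x,R)}$ with $f_{B(\overline x,2R)}$ and noting that $f$ vanishes on $B(\overline x,2R)\setminus B^*$). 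The factor $r/R$ then kills the resulting logarithm, and the $BMO$ inequality follows. The $LMO$ statement is proved along the same lines, replacing Lemma \ref{Lem1} by Lemma \ref{2.4} in the bound for $II_1$ (where the sharper estimate gains a compensating $(1+\log(5R/r))^{-1}$ factor) and invoking the $LMO$--version of John--Nirenberg in the bound for $I$; in $II_2$ the product $(1+\log(R/r))(r/R)$ remains bounded, so the extra weight $1+\log(3R/r)$ appearing in the $LMO$ seminorm is absorbed term by term. Thus the main obstacle is the boundary term $II_2$, where compact support and logarithmic weights intertwine; the rest is routine Calder\'on--Zygmund analysis.
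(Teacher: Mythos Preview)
Your overall architecture (split $f-f_{2B}$ into near and far parts, use $L^p$-boundedness plus John--Nirenberg for the near part, kernel smoothness for the far part) matches the paper's, and your treatment of the local piece $I$ is essentially identical to the paper's argument. The gap is in your boundary term $II_2$.

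You claim that the region of integration for $II_2$, namely $(2B)^c\cap (B^*)^c$ intersected with the support of $\widetilde k$, is contained in an annulus $\{c_1R\le d(x_0,y)\le c_2R\}$, and therefore that integrating $r/d(x_0,y)^{Q+1}$ produces the factor $r/R$. This is false when $x_0$ lies close to $\partial B^*$: if $d(\overline x,x_0)=R-\varepsilon$ with $\varepsilon\ll R$ and $r\ll\varepsilon$, then there are points $y\notin B^*$ with $d(x_0,y)$ as small as $\varepsilon$, not $c_1R$. The integral of $r/d(x_0,y)^{Q+1}$ over $\{2r<d(x_0,y)<9R\}$ is then of order $1$, not $r/R$, so your bound $|II_2(x)|\le c(r/R)|f_{2B}|$ does not hold. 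Without the decay factor $r/R$, the subsequent step---absorbing the logarithm from $|f_{2B}|\le c(1+\log(R/r))[f]_{BMO_{loc}}+\ldots$---collapses, and neither the $BMO$ nor the $LMO$ estimate follows.

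The paper sidesteps this difficulty by \emph{not} splitting $II$ according to $y\in B^*$ versus $y\notin B^*$. Instead it applies Lemma~\ref{2.4} (respectively Lemma~\ref{Lem1}) directly over $B(\overline x,8R)\setminus 2B$, which is legitimate because the hypothesis $f\in LMO_{loc}(B(\overline x,R;KR))$ (respectively $BMO_{loc}$) already controls the oscillation of the zero-extended $f$ over all balls $B(x_0,2^kr)$ centered at $x_0\in B(\overline x,R)$ and contained in $B(\overline x,KR)$, for $K$ large enough. Your device of absorbing $\|f\|_{L^p}/|B^*|^{1/p}$ into $[f]_{BMO_{loc}}$ via the vanishing of $f$ on $B(\overline x,2R)\setminus B^*$ is correct and elegant, but it becomes unnecessary once $II$ is handled in one piece.
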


We recall that the singular integral operator $T$ and the commutator $T_{b}$
are continuous in $L^{p}$ (see \cite{bb2} and \cite{BC}). Moreover we note
that in \cite{BB3} a general continuity result for singular integral operators
on the scale of spaces $BMO_{\phi}$ has been proved, which in particular
applies to $BMO$ and $LMO$. However, the bounds we need here are formulated in
terms of local $BMO$ and $LMO$ spaces; moreover, the strong cancellation
property we can rely on makes it easy to present a short self-contained proof.

\bigskip

\begin{proof}
We are going to prove the second inequality; the proof of the first follows by
the same reasoning, just dropping all the \textquotedblleft$\log
$\textquotedblright\ functions.

Let $B=B\left(  x_{0},r\right)  \subset B\left(  \overline{x},3R\right)  $
with $x_{0}\in B\left(  \overline{x},R\right)  $. Since sprt$f\subset B\left(
\overline{x},R\right)  $ for $x\in B\left(  \overline{x},3R\right)  $
$\widetilde{T}f\left(  x\right)  =Tf\left(  x\right)  ,$ so in the following
we will always handle $\widetilde{T}f$ instead of $Tf$. Let us split%
\[
f(x)=f_{2B}+(f(x)-f_{2B})\chi_{2B}(x)+(f(x)-f_{2B})\chi_{(2B)^{c}}%
(x)=f_{1}+f_{2}(x)+f_{3}(x).
\]

By (\ref{T(c)=0}), $\widetilde{T}f_{1}=0$, hence for any $c\in\mathbb{R}$%
\begin{align*}
&  \frac{1+\log\frac{6R}{r}|}{|B|}\int_{B}\left\vert \widetilde{T}%
f(x)-(\widetilde{T}f)_{B}\right\vert dx\\
&  \leq2\frac{1+\log\frac{6R}{r}}{|B|}\int_{B}\left\vert \widetilde
{T}f(x)-c\right\vert dx\\
&  \leq2\frac{1+\log\frac{6R}{r}}{|B|}\left(  \int_{B}\left\vert \widetilde
{T}f_{2}(x)\right\vert dx+\int_{B}\left\vert \widetilde{T}f_{3}%
(x)-c\right\vert dx\right) \\
&  \equiv I+II.
\end{align*}

By H\"{o}lder and John-Nirenberg inequalities and the $L^{2}$ continuity of
$\widetilde{T}$ we have
\begin{align*}
I  &  \leq2\left(  1+\log\frac{6R}{r}\right)  \left(  \frac{1}{|B|}\int
_{B}|\widetilde{T}f_{2}(x)|^{2}\right)  ^{1/2}\\
&  \leq c\left(  1+\log\frac{6R}{r}\right)  \left(  \frac{1}{|B|}%
\int_{{\mathbb{R}}^{n}}|f_{2}(x)|^{2}dx\right)  ^{1/2}\\
&  \leq c\left(  1+\log\frac{12R}{2r}\right)  \left(  \frac{1}{|2B|}{\int
}_{2B}|f(x)-f_{2B}|^{2}dx\right)  ^{1/2}\\
&  \leq c[f]_{LMO_{loc}\left(  B\left(  \overline{x},R;6R\right)  \right)
}\text{.}%
\end{align*}

To bound $II,$ pick $x^{\ast}\in B$ such that $\widetilde{T}f_{3}(x^{\ast
})<+\infty$ (this is true for a.e. $x^{\ast}\in B$ since $\widetilde{T}%
f_{3}\in L^{2}\left(  B\right)  $) and choose $c=\widetilde{T}f_{3}(x^{\ast}%
)$. Then for any $x\in B$, by (\ref{T(c)=0}) we can write, by \ref{standard 2}%
,%
\begin{align*}
\left\vert \widetilde{T}f_{3}(x)-\widetilde{T}f_{3}(x^{\ast})\right\vert  &
=\left\vert \int\left[  \widetilde{k}(x,y)-\widetilde{k}(x^{\ast},y)\right]
f_{3}(y)dy\right\vert \\
&  =\left\vert \int_{B\left(  \overline{x},8R\right)  \setminus B\left(
x_{0},2r\right)  }\left[  \widetilde{k}(x,y)-\widetilde{k}(x^{\ast},y)\right]
\left[  f(y)-f_{2B}\right]  dy\right\vert \\
&  \leq c\int_{B\left(  \overline{x},8R\right)  \setminus B\left(
x_{0},2r\right)  }\frac{d(x,x^{\ast})}{d(x^{\ast},y)^{Q+1}}|f(y)-f_{2B}|dy\\
&  \leq cr\int_{B\left(  \overline{x},8R\right)  \setminus B\left(
x_{0},2r\right)  }\frac{|f(y)-f_{2B}|}{d(x_{0},y)^{Q+1}}dy.
\end{align*}
Applying Lemma \ref{2.4} with $\beta=1$ we get%
\[
\int_{B\left(  \overline{x},8R\right)  \setminus B\left(  x_{0},2r\right)
}\frac{|f(y)-f_{B}|}{d(x_{0},y)^{Q+1}}dy\leq\frac{C}{r\left(  1+\log\frac
{3R}{r}\right)  }[f]_{LMO_{loc}\left(  B\left(  \overline{x},R;KR\right)
\right)  }%
\]
for some large constant $K$ (independent of $R$), hence%
\[
II\leq c[f]_{LMO_{loc}\left(  B\left(  \overline{x},R;KR\right)  \right)  }%
\]
and we are done.
\end{proof}

\begin{remark}
\label{remark modified T}In the previous theorem the number $3$ has nothing
special: we will also need, in the following, a modified version of the
previous estimate: given a constant $K>3$ there exists a constant $K^{\prime
}>K$ such that for any ball $B\left(  \overline{x},K^{\prime}R\right)
\subset\Omega$ we have%
\begin{align*}
\lbrack Tf]_{BMO_{loc}\left(  B\left(  \overline{x},R;KR\right)  \right)  }
&  \leq c[f]_{BMO_{loc}\left(  B\left(  \overline{x},R;K^{\prime}R\right)
\right)  };\\
\lbrack Tf]_{LMO_{loc}\left(  B\left(  \overline{x},R;3R\right)  \right)  }
&  \leq c[f]_{LMO_{loc}\left(  B\left(  \overline{x},R;KR\right)  \right)  }%
\end{align*}
for any $f\in BMO_{loc}\left(  B\left(  \overline{x},R;K^{\prime}R\right)
\right)  $ (or $LMO_{loc}$, respectively) with sprt$f\subset B\left(
\overline{x},R\right)  .$
\end{remark}

\begin{theorem}
\label{Thm commutator convolution}Let $b\in LMO_{loc}(\Omega)$, then for any
$p\in\left(  1,\infty\right)  $ there exists a constant $C$ such that for any
$R>0$ such that $B\left(  \overline{x},KR\right)  \subset\Omega$, any $f\in
BMO_{loc}^{p}\left(  B\left(  \overline{x},R;3R\right)  \right)  $ with
sprt$f\subset$ $B\left(  \overline{x},R\right)  $,%
\begin{align}
&  [T_{b}f]_{BMO_{loc}\left(  B\left(  \overline{x},R;3R\right)  \right)
}\leq C[b]_{LMO_{loc}\left(  B\left(  \overline{x},4R;KR\right)  \right)
}\cdot\nonumber\\
&  \cdot\left\{  \lbrack f]_{BMO_{loc}\left(  B\left(  \overline
{x},R;3R\right)  \right)  }+\frac{1}{\left\vert B\left(  \overline
{x},R\right)  \right\vert ^{1/p}}\Vert f\Vert_{L^{p}\left(  B\left(
\overline{x},R\right)  \right)  }\right\}  . \label{commutator}%
\end{align}
The number $C$ depends on $p$ and the constants of the singular kernel of $T$,
but not on $f,b,R$. This means in particular that, if $b\in VLMO_{loc}%
(\Omega),$ then for any $\varepsilon>0$ there exists $R>0$ such that%
\[
C[b]_{LMO_{loc}\left(  B\left(  \overline{x},4R;KR\right)  \right)
}<\varepsilon.
\]

\end{theorem}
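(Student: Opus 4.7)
My plan is to follow the template of Theorem \ref{2.7}, combined with the standard Coifman--Rochberg--Weiss commutator trick, H\"older's inequality, and the localized John--Nirenberg inequality (Corollary \ref{coroll JN}). First I would fix a ball $B = B(x_0, r) \subset B(\overline{x}, 3R)$ with $x_0 \in B(\overline{x}, R)$, set $h := b - b_{2B}$, and use that the constant $b_{2B}$ commutes with $\widetilde{T}$ to rewrite
\[
\widetilde{T}_b f(x) = \widetilde{T}(hf)(x) - h(x)\cdot \widetilde{T} f(x).
\]
The BMO oscillation of $\widetilde{T}_b f$ over $B$ then splits into the oscillations of the two pieces $h\cdot \widetilde{T} f$ and $\widetilde{T}(hf)$, which I would estimate separately by the usual device of bounding $\tfrac{1}{|B|}\int_B |\,\cdot - c|\,dx$ for a well-chosen constant $c$.

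For the pointwise-product term $h\cdot \widetilde{T} f$ I would use H\"older's inequality with exponents $(s, s')$ slightly larger than $1$:
\[
\frac{1}{|B|}\int_B |h(x)|\,|\widetilde{T} f(x) - c|\,dx \leq \Bigl(\frac{1}{|B|}\int_B |h|^{s'}\Bigr)^{1/s'}\Bigl(\frac{1}{|B|}\int_B |\widetilde{T} f - c|^{s}\Bigr)^{1/s}.
\]
The first factor is controlled by $c\,[b]_{LMO_{loc}(B(\overline{x}, 4R; KR))}$ via a John--Nirenberg-type argument for $LMO$ (analogous to Corollary \ref{coroll JN}). The second factor, with $c = (\widetilde{T} f)_B$, is controlled by $[\widetilde{T} f]_{BMO_{loc}(B(\overline{x}, R; 3R))}$ via Corollary \ref{coroll JN}; that in turn is bounded by $[f]_{BMO_{loc}(B(\overline{x}, R; KR))} + \|f\|_{L^p(B(\overline{x}, R))}/|B(\overline{x}, R)|^{1/p}$ thanks to Theorem \ref{2.7} and Remark \ref{remark modified T}, together with the $L^p$-continuity of $T$ for the absolute-size contribution.

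For the composite term $\widetilde{T}(hf)$ I would mimic the proof of Theorem \ref{2.7} applied to $g := hf$. Decompose $g = g_{2B} + (g - g_{2B})\chi_{2B} + (g - g_{2B})\chi_{(2B)^c} =: g_1 + g_2 + g_3$; since $\widetilde{T} g_1 \equiv 0$ by the cancellation property \eqref{T(c)=0}, it suffices to bound $\widetilde{T} g_2$ and $\widetilde{T} g_3$. The near piece $\widetilde{T} g_2$ is controlled in $L^2(B)$ via the $L^2$-continuity of $T$, after which H\"older splits the resulting $L^2$-average of $|h||f|$ on $2B$ into an $L^{q'}$-average of $h$ (yielding $[b]_{*}$ after John--Nirenberg for $LMO$) times an $L^{q}$-average of $f$ (yielding $[f]_{*} + \|f\|_{L^p(B(\overline{x}, R))}/|B(\overline{x}, R)|^{1/p}$). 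For the far piece I would pick $x^* \in B$ with $\widetilde{T} g_3(x^*)$ finite and use the kernel regularity \eqref{standard 2} to bound $|\widetilde{T} g_3(x) - \widetilde{T} g_3(x^*)|$ by
\[
c\, r \int_{B(\overline{x}, 8R)\setminus B(x_0, 2r)} \frac{|h(y)\,f(y) - (hf)_{2B}|}{d(x_0, y)^{Q+1}}\,dy;
\]
writing $hf - (hf)_{2B}$ as a linear combination of $h(f - f_{2B})$, $(h - h_{2B})f_{2B}$ and genuine constants, one then applies Lemmas \ref{Lem1} and \ref{2.4} separately to the two non-constant pieces (using that $h \in LMO_{loc}$ and $f \in BMO_{loc}$) to obtain an overall bound of the form $c\,[b]_{*}\bigl([f]_{*} + \|f\|_{L^p(B(\overline{x}, R))}/|B(\overline{x}, R)|^{1/p}\bigr)$.

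The main obstacle I expect is this last step: cleanly splitting $hf - (hf)_{2B}$ so that the two Lemmas apply (separately to the BMO factor $f$ and the LMO factor $h$), and keeping careful track of all the scales. In particular, it is exactly the far-field integral that forces the $LMO$-seminorm of $b$ to be taken on the enlarged ball $B(\overline{x}, 4R; KR)$ rather than on $2B$: the enlarged inner radius $4R$ and the large outer radius $KR$ supply precisely the room needed to absorb the nested decompositions. Once all pieces are collected, the desired inequality \eqref{commutator} follows; the $VLMO_{loc}$ consequence is then immediate from \eqref{eta f}, since $[b]_{LMO_{loc}(B(\overline{x}, 4R; KR))} \leq \eta_b(2KR) \to 0$ as $R \to 0$.
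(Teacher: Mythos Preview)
Your decomposition differs from the paper's: the paper splits $f=f_{2B}+(f-f_{2B})\chi_{2B}+(f-f_{2B})\chi_{(2B)^c}$ and applies $\widetilde{T}_b$ to each piece, rather than writing $\widetilde{T}_b f=\widetilde{T}(hf)-h\,\widetilde{T}f$ and splitting $g=hf$. This matters for the near piece. In the paper, the near term is $\widetilde{T}_b f_2$ with $f_2=(f-f_{2B})\chi_{2B}$, and one invokes the \emph{commutator's} $L^2$-bound $\|\widetilde{T}_b f_2\|_{L^2}\le c\,[b]_{BMO}\|f_2\|_{L^2}$ (from \cite{BZ}); since $\|f_2\|_{L^2}^2\le c|2B|\,[f]_{BMO}^2$ by John--Nirenberg, this yields $c\,[b]_{BMO}[f]_{BMO}$ with no logarithm. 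For the constant piece $f_1=f_{2B}$ the paper uses $\widetilde{T}f_1=0$, so that $\widetilde{T}_b f_1=f_{2B}\,\widetilde{T}b$, and then the \emph{$LMO$-to-$LMO$} estimate for $\widetilde{T}$ (second inequality in Theorem~\ref{2.7}) absorbs the factor $\log(R/r)$ coming from $|f_{2B}|$ into $[b]_{LMO}$.

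In your scheme the analog of the near piece is $\widetilde{T}g_2$ with $g_2=(hf-(hf)_{2B})\chi_{2B}$, and you appeal only to the $L^2$-continuity of $T$. But then $\|g_2\|_{L^2}$ leads you to an $L^q$-average of $f$ over $2B$, and this is \emph{not} bounded by $[f]_{BMO}+\|f\|_{L^p}/|B(\overline{x},R)|^{1/p}$: by \eqref{4.1} it carries an extra factor $1+\log(3R/r)$ (coming from $|f_{2B}|$). Ordinary John--Nirenberg on $h$ only gives $c\,[b]_{BMO}$ for the companion factor, so the $\log$ is not absorbed and your bound for $g_2$ blows up as $r\to 0$. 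To repair this you would either have to fall back on the commutator's $L^2$-bound (which is the paper's route), or prove and use a sharpened John--Nirenberg of the form $\bigl(\tfrac{1}{|2B|}\int_{2B}|b-b_{2B}|^{s'}\bigr)^{1/s'}\le c\,[b]_{LMO}/(1+\log(R/r))$; this is plausible but is not in the paper and is not what you claimed. There is also a smaller omission in your treatment of $h\cdot\widetilde{T}f$: the displayed integral $\tfrac1{|B|}\int_B|h|\,|\widetilde{T}f-c|$ is not the full oscillation of $h\,\widetilde{T}f$, and the missing piece $|(\widetilde{T}f)_B|\cdot\tfrac1{|B|}\int_B|h-h_B|$ again needs the $LMO$ oscillation of $b$ on $B$ to kill the $\log$ in $|(\widetilde{T}f)_B|$. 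Your far piece $g_3$ is essentially right, but note that the product term $h(f-f_{2B})$ is handled by a dyadic-plus-H\"older argument (exactly the paper's $III_4$), not by a direct application of Lemma~\ref{Lem1} or Lemma~\ref{2.4}.
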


\begin{remark}
\label{remark modified Tb}Again, the absolute constant $3$ appearing in the
previous estimates is not so important: replacing $3$ with a larger number
simply causes the constants $K\ $and $4$ be replaced by larger constants.

Also, note that by \cite[Thm. 7.1]{BZ},
\begin{equation}
\left\Vert T_{b}f\right\Vert _{L^{p}\left(  B\left(  \overline{x},3R\right)
\right)  }\leq C[b]_{LMO_{loc}\left(  B\left(  \overline{x},4R;KR\right)
\right)  }\left\Vert f\right\Vert _{L^{p}\left(  B\left(  \overline
{x},3R\right)  \right)  } \label{commutator Lp}%
\end{equation}
which, coupled with the above theorem, gives, for a function $f$ supported in
$B\left(  \overline{x},R\right)  $,%
\begin{align*}
&  \left\Vert T_{b}f\right\Vert _{BMO_{loc}^{p}\left(  B\left(  \overline
{x},R;3R\right)  \right)  }\leq C[b]_{LMO_{loc}\left(  B\left(  \overline
{x},4R;KR\right)  \right)  }\cdot\\
&  \cdot\left\{  \lbrack f]_{BMO_{loc}\left(  B\left(  \overline
{x},R;3R\right)  \right)  }+\left(  1+\frac{1}{\left\vert B\left(
\overline{x},R\right)  \right\vert ^{1/p}}\right)  \Vert f\Vert_{L^{p}\left(
B\left(  \overline{x},R\right)  \right)  }\right\}  .
\end{align*}

\end{remark}

\begin{proof}
[Proof of Theorem \ref{Thm commutator convolution}]Since we want to bound
$T_{b}f\left(  x\right)  $ for $f$ supported in $B\left(  \overline
{x},R\right)  $ and $x\in B\left(  \overline{x},3R\right)  $, as remarked at
the beginning of this section we have%
\[
T_{b}f\left(  x\right)  =\widetilde{T}_{b}f\left(  x\right)  ,
\]
hence from now on we will work with $\widetilde{T}_{b}$.

Let $B=B\left(  x_{0},r\right)  \subset$ $B\left(  \overline{x},3R\right)  $
with $x_{0}\in B\left(  \overline{x},R\right)  $, and let us split, like in
the proof of Theorem \ref{2.7}:%
\[
f(x)=f_{2B}+(f(x)-f_{2B})\chi_{2B}(x)+(f(x)-f_{2B})\chi_{(2B)^{c}}%
(x)=f_{1}+f_{2}\left(  x\right)  +f_{3}\left(  x\right)
\]
from which
\begin{align*}
\widetilde{T}_{b}f-(\widetilde{T}_{b}f)_{B}  &  =(\widetilde{T}_{b}%
f_{1}-(\widetilde{T}_{b}f_{1})_{B})+(\widetilde{T}_{b}f_{2}-(\widetilde{T}%
_{b}f_{2})_{B})+\\
+(\widetilde{T}_{b}f_{3}-(\widetilde{T}_{b}f_{3})_{B})  &  =I+II+III\,.
\end{align*}

Then, since $f_{1}$ is constant, we have $\widetilde{T}f_{1}=0$ (see
(\ref{T(c)=0})), hence%
\[
I=\widetilde{T}(bf_{1})-f_{1}(\widetilde{T}b)_{B}=f_{1}(\widetilde
{T}b-(\widetilde{T}b)_{B})
\]
hence by (\ref{4.1}) and Theorem \ref{2.7} we have%
\begin{align}
&  \frac{1}{|B|}\int_{B}|I|dx=\frac{|f_{2B}|}{|B|}\int_{B}|\widetilde
{T}b-(\widetilde{T}b)_{B}|dx\nonumber\\
&  \leq c\left\{  \log\frac{3R}{r}\left[  f\right]  _{BMO_{loc}\left(
B\left(  \overline{x},R;13R\right)  \right)  }+\frac{1}{\left\vert B\left(
\overline{x},R\right)  \right\vert ^{1/p}}\Vert f\Vert_{L^{p}\left(  B\left(
\overline{x},R\right)  \right)  }\right\}  \cdot\nonumber\\
&  \cdot\frac{1}{|B|}\int_{B}|\widetilde{T}b-(\widetilde{T}b)_{B}%
|dx\nonumber\\
&  \leq c\left\{  \frac{\log\frac{3R}{r}}{1+\log\frac{6R}{r}}\left[  f\right]
_{BMO_{loc}\left(  B\left(  \overline{x},R;13R\right)  \right)  }+\frac
{1}{\left\vert B\left(  \overline{x},R\right)  \right\vert ^{1/p}}\Vert
f\Vert_{L^{p}\left(  B\left(  \overline{x},R\right)  \right)  }\right\}
\cdot\nonumber\\
&  \cdot\frac{1+\log\frac{6R}{r}}{|B|}\int_{B}|\widetilde{T}b-(\widetilde
{T}b)_{B}|dx\nonumber\\
&  \leq c[\widetilde{T}b]_{LMO_{loc}\left(  B\left(  \overline{x},R,3R\right)
\right)  }\left\{  \left[  f\right]  _{BMO_{loc}\left(  B\left(  \overline
{x},R;13R\right)  \right)  }+\frac{1}{\left\vert B\left(  \overline
{x},R\right)  \right\vert ^{1/p}}\Vert f\Vert_{L^{p}\left(  B\left(
\overline{x},R\right)  \right)  }\right\} \nonumber\\
&  \leq c[b]_{LMO_{loc}\left(  B\left(  \overline{x},R,KR\right)  \right)
}\left\{  \left[  f\right]  _{BMO_{loc}\left(  B\left(  \overline
{x},R;13R\right)  \right)  }+\frac{1}{\left\vert B\left(  \overline
{x},R\right)  \right\vert ^{1/p}}\Vert f\Vert_{L^{p}\left(  B\left(
\overline{x},R\right)  \right)  }\right\}  . \label{lunga 1}%
\end{align}

Now we consider
\begin{align*}
\frac{1}{|B|}\int_{B}|II|dx  &  =\frac{1}{|B|}\int_{B}|\widetilde{T}_{b}%
f_{2}-(\widetilde{T}_{b}f_{2})_{B}|dx\\
&  \leq\frac{2}{|B|}\int_{B}|\widetilde{T}_{b}f_{2}|dx\leq2|B|^{-1/2}%
\Vert\widetilde{T}_{b}f_{2}\Vert_{L^{2}\left(  B\left(  x_{0},r\right)
\right)  }%
\end{align*}
by H\"{o}lder inequality. Next, we apply the $L^{2}$ boundedness of
$\widetilde{T}_{b}$. The local continuity result on the commutator of
$\widetilde{T}_{b}$ in $L^{p}$ proved in \cite[Thm. 7.1]{BZ} implies that%
\[
\Vert\widetilde{T}_{b}f_{2}\Vert_{L^{2}\left(  B\left(  \overline
{x},3R\right)  \right)  }\leq c[b]_{BMO_{loc}\left(  B\left(  \overline
{x},4R;KR\right)  \right)  }\Vert f_{2}\Vert_{L^{2}\left(  B\left(
\overline{x},3R\right)  \right)  }%
\]
for some absolute constant $K>4,$ with $c$ independent of $R$. Hence, since
$B\left(  x_{0},r\right)  \subset B\left(  \overline{x},3R\right)  ,$%
\begin{align}
\frac{1}{|B|}\int_{B}|II|dx  &  \leq\nonumber\\
&  \leq c|B|^{-1/2}[b]_{BMO_{loc}\left(  B\left(  \overline{x},4R;KR\right)
\right)  }\Vert f_{2}\Vert_{L^{2}\left(  B\left(  x_{0},2r\right)  \right)
}\nonumber\\
&  =c[b]_{BMO_{loc}\left(  B\left(  \overline{x},4R;KR\right)  \right)
}\left(  \frac{1}{|2B|}\int_{2B}|f(x)-f_{2B}|^{2}dx\right)  ^{1/2}\nonumber\\
&  \leq c[b]_{BMO_{loc}\left(  B\left(  \overline{x},4R;KR\right)  \right)
}[f]_{BMO_{loc}\left(  B\left(  \overline{x},R;KR\right)  \right)  }
\label{lunga 2}%
\end{align}
where we have used also Corollary \ref{coroll JN}.

Last, we come to%
\begin{align*}
III  &  =\widetilde{T}(bf_{3})(x)-b(x)\widetilde{T}f_{3}(x)-\frac{1}{|B|}%
\int_{B}(\widetilde{T}_{b}f_{3})(z)dz=\\
&  =\widetilde{T}(\left(  b-b_{B}\right)  f_{3})-\left(  b-b_{B}\right)
\widetilde{T}f_{3}+\\
&  -\left(  \frac{1}{|B|}\int_{B}\widetilde{T}((b-b_{B})f_{3})(z)dz-\frac
{1}{|B|}\int_{B}(b(z)-b_{B})\widetilde{T}f_{3}(z)dz\right)
\end{align*}%
\begin{align*}
&  =-(b(x)-b_{B})(\widetilde{T}f_{3}(x)-\widetilde{T}f_{3}(x_{0}%
))-(b(x)-b_{B})\widetilde{T}f_{3}(x_{0})+\\
&  +\frac{1}{|B|}\int_{B}(b(z)-b_{B})(\widetilde{T}f_{3}(z)-\widetilde{T}%
f_{3}(x_{0}))dz+\\
&  -\frac{1}{|B|}\int_{B}[\widetilde{T}((b-b_{B})f_{3})(z)-\widetilde
{T}((b-b_{B})f_{3})(x)]dz\\
&  \equiv III_{1}+III_{2}+III_{3}+III_{4}\,.
\end{align*}

First, we want to bound, for $x\in B$,
\begin{align*}
\left\vert \widetilde{T}f_{3}(x)-\widetilde{T}f_{3}(x_{0})\right\vert  &
=\left\vert \int_{{\mathbb{R}}^{n}\setminus2B}(\widetilde{k}(x,y)-\widetilde
{k}(x_{0},y)(f(y)-f_{2B})dy\right\vert \leq\\
&  \leq c\int_{B\left(  \overline{x},7R\right)  \setminus B\left(
x_{0},2r\right)  }\frac{d(x_{0}.x)}{d(x_{0},y)^{Q+1}}|f(y)-f_{2B}|dy\\
&  =c\int_{B\left(  \overline{x},R\right)  \setminus B\left(  x_{0},2r\right)
}\frac{d(x_{0},x)}{d(x_{0},y)^{Q+1}}|f(y)-f_{2B}|dy+\\
&  +c\int_{B\left(  \overline{x},7R\right)  \setminus\left(  B\left(
\overline{x},R\right)  \cup B\left(  x_{0},2r\right)  \right)  }\frac
{d(x_{0},x)}{d(x_{0},y)^{Q+1}}|f(y)-f_{2B}|dy\\
&  \equiv I+II
\end{align*}

By Lemma \ref{Lem1}, with $\beta=1$
\begin{align*}
I  &  \leq cr\left(  \int_{B\left(  \overline{x},R\right)  \setminus B\left(
x_{0},2r\right)  }\frac{|f(y)-f_{B}|}{d(x_{0},y)^{Q+1}}dy+\int_{B\left(
\overline{x},R\right)  \setminus B\left(  x_{0},2r\right)  }\frac
{|f_{B}-f_{2B}|}{d(x_{0},y)^{Q+1}}dy\right) \\
&  \leq cr\left\{  \frac{1}{r}[f]_{BMO_{loc}\left(  B\left(  \overline
{x},R;5R\right)  \right)  }+\frac{1}{r}|f_{B}-f_{2B}|\right\}  \leq
c[f]_{BMO_{loc}\left(  B\left(  \overline{x},R;5R\right)  \right)  }.
\end{align*}
Since sprt$f\subset B\left(  \overline{x},R\right)  ,$%
\[
II=c|f_{2B}|\int_{B\left(  \overline{x},7R\right)  \setminus\left(  B\left(
\overline{x},R\right)  \cup B\left(  x_{0},2r\right)  \right)  }\frac
{d(x_{0},x)}{d(x_{0},y)^{Q+1}}dy\leq c|f_{2B}|
\]
by (\ref{4.1})%
\[
\leq c\log\frac{3R}{r}\left[  f\right]  _{BMO_{loc}\left(  B\left(
\overline{x},R;13R\right)  \right)  }+\frac{c}{\left\vert B\left(
\overline{x},R\right)  \right\vert ^{1/p}}\Vert f\Vert_{L^{p}\left(  B\left(
\overline{x},R\right)  \right)  }.
\]
Therefore%
\begin{align*}
\left\vert \widetilde{T}f_{3}(x)-\widetilde{T}f_{3}(x_{0})\right\vert  &  \leq
c\left(  1+\log\frac{3R}{r}\right)  \left[  f\right]  _{BMO_{loc}\left(
B\left(  \overline{x},R;13R\right)  \right)  }+\\
&  +\frac{c}{\left\vert B\left(  \overline{x},R\right)  \right\vert ^{1/p}%
}\Vert f\Vert_{L^{p}\left(  B\left(  \overline{x},R\right)  \right)  }%
\end{align*}
which implies
\begin{align}
&  \frac{1}{|B|}\int_{B}\left\vert III_{1}\right\vert dx\leq\frac{1}{|B|}%
\int_{B}\left\vert b(x)-b_{B}\right\vert dx\cdot\label{7}\\
&  \cdot\left\{  c\left(  1+\log\frac{3R}{r}\right)  \left[  f\right]
_{BMO_{loc}\left(  B\left(  \overline{x},R;13R\right)  \right)  }+\frac
{c}{\left\vert B\left(  \overline{x},R\right)  \right\vert ^{1/p}}\Vert
f\Vert_{L^{p}\left(  B\left(  \overline{x},R\right)  \right)  }\right\}
\nonumber\\
&  \leq c\left[  f\right]  _{BMO_{loc}\left(  B\left(  \overline
{x},R;13R\right)  \right)  }[b]_{LMO_{loc}\left(  B\left(  \overline
{x},R;3R\right)  \right)  }+\nonumber\\
&  +\frac{c}{\left\vert B\left(  \overline{x},R\right)  \right\vert ^{1/p}%
}\Vert f\Vert_{L^{p}\left(  B\left(  \overline{x},R\right)  \right)
}[b]_{BMO_{loc}\left(  B\left(  \overline{x},R;3R\right)  \right)  }.\nonumber
\end{align}
and%
\begin{align}
\frac{1}{|B|}\int_{B}|III_{3}|dx  &  \leq c\left[  f\right]  _{BMO_{loc}%
\left(  B\left(  \overline{x},R;13R\right)  \right)  }[b]_{LMO_{loc}\left(
B\left(  \overline{x},R;3R\right)  \right)  }\label{11}\\
&  +\frac{c}{\left\vert B\left(  \overline{x},R\right)  \right\vert ^{1/p}%
}\Vert f\Vert_{L^{p}\left(  B\left(  \overline{x},R\right)  \right)
}[b]_{BMO_{loc}\left(  B\left(  \overline{x},R;3R\right)  \right)  }.\nonumber
\end{align}

In order to bound $III_{2}$, we now start proving that $\widetilde{T}%
f_{3}(x_{0})$ exists and satisfies the estimate
\begin{equation}
\left\vert \widetilde{T}f_{3}(x_{0})\right\vert \leq c\left(  1+\log\left(
\frac{4R}{r}\right)  \right)  [f]_{BMO_{loc}\left(  B\left(  \overline
{x},R;5R\right)  \right)  }\,.\label{f_3}%
\end{equation}
Indeed, let $j=1,2,...,n\ $with $2^{n}r<7R\leq2^{n+1}r,$ $B_{j}=B\left(
x_{0},2^{j}r\right)  $. Then (here we have to modify the technique previously
used, to exploit the cancellation property of the kernel $\widetilde{k}$)%
\begin{align*}
\left\vert \widetilde{T}f_{3}(x_{0})\right\vert  &  =\left\vert \int_{B\left(
\overline{x},6R\right)  \setminus B(x_{0},2r)}\widetilde{k}(x_{0}%
,y)(f(y)-f_{2B})dy\right\vert =\\
&  =\left\vert \int_{B(x_{0},2^{n+1}r)\setminus B(x_{0},2r)}\left(
...\right)  dy-\int_{B(x_{0},2^{n+1}r)\setminus B\left(  \overline
{x},6R\right)  }\left(  ...\right)  dy\right\vert \equiv\left\vert
A-B\right\vert .
\end{align*}%
\[
\left\vert A\right\vert \leq\sum_{j=1}^{n}\left\vert \int_{B_{j+1}\setminus
B_{j}}\widetilde{k}(x_{0},y)(f(y)-f_{2B})dy\right\vert =\sum_{j=1}%
^{n}\left\vert \int_{B_{j+1}\setminus B_{j}}\widetilde{k}(x_{0}%
,y)(f(y)-f_{B_{j+1}})dy\right\vert
\]
by the cancellation property of $\widetilde{k}$%
\begin{align*}
&  \leq c\sum_{j=1}^{n}\int_{B_{j+1}\setminus B_{j}}\frac{1}{d(x_{0},y)^{Q}%
}|f(y)-f_{B_{j+1}}|dy\\
&  \leq c\sum_{j=1}^{n}\frac{1}{|B_{j+1}|}\int_{B_{j+1}}|f(y)-f_{B_{j+1}}|dy\\
&  \leq cn[f]_{BMO_{loc}\left(  B\left(  \overline{x},R;5R\right)  \right)
}\leq c\left(  1+\log\left(  \frac{4R}{r}\right)  \right)  [f]_{BMO_{loc}%
\left(  B\left(  \overline{x},R;5R\right)  \right)  }.
\end{align*}
On the other hand,%
\begin{align}
\left\vert B\right\vert  &  \leq\int_{B(x_{0},2^{n+1}r)\setminus B\left(
\overline{x},6R\right)  }\left\vert \widetilde{k}(x_{0},y)(f(y)-f_{2B}%
)\right\vert dy\nonumber\\
&  \leq\frac{c}{R^{Q}}\int_{B(x_{0},2^{n+1}r)}\left\vert f(y)-f_{2B}%
)\right\vert dy\leq\frac{c}{\left\vert B_{n+1}\right\vert }\int_{B_{n+1}%
}\left\vert f(y)-f_{2B})\right\vert dy\label{9-}\\
&  \leq\frac{c}{\left\vert B_{n+1}\right\vert }\int_{B_{n+1}}\left\vert
f(y)-f_{B_{n+1}}+f_{B_{n+1}}-f_{B_{n}}+\cdots-f_{2B}\right\vert dy\nonumber\\
&  \leq\frac{c}{\left\vert B_{n+1}\right\vert }\int_{B_{n+1}}\left\vert
f(y)-f_{B_{n+1}}\right\vert dy+\sum_{j=2}^{n}\left\vert f_{B_{j+1}}-f_{B_{j}%
}\right\vert \nonumber\\
&  \leq cn[f]_{BMO_{loc}\left(  B\left(  \overline{x},R;5R\right)  \right)
}\leq c\left(  1+\log\left(  \frac{4R}{r}\right)  \right)  [f]_{BMO_{loc}%
\left(  B\left(  \overline{x},R;5R\right)  \right)  },\nonumber
\end{align}
hence \eqref{f_3} is proved. Therefore
\begin{align}
\frac{1}{|B|}\int_{B}|III_{2}|dx &  \leq c\log\left(  \frac{4R}{r}\right)
[f]_{BMO_{loc}\left(  B\left(  \overline{x},R;5R\right)  \right)  }\frac
{1}{|B|}\int_{B}|b(x)-b_{B}|dx\label{9}\\
&  \leq c[f]_{BMO_{loc}\left(  B\left(  \overline{x},R;5R\right)  \right)
}[b]_{LMO_{loc}\left(  B\left(  \overline{x},R;3R\right)  \right)
}\,.\nonumber
\end{align}

Now we want to bound, $\forall x,y\in B$,
\begin{align}
&  \left\vert \widetilde{T}((b-b_{B})f_{3})(x)-\widetilde{T}((b-b_{B}%
)f_{3})(y)\right\vert \label{5}\\
&  \leq\int_{B\left(  \overline{x},8R\right)  \setminus B\left(
x_{0},2r\right)  }|\widetilde{k}(x,z)-\widetilde{k}(y,z)||b(z)-b_{B}%
||f(z)-f_{2B}|\,dz\nonumber\\
&  \leq c\int_{B\left(  \overline{x},8R\right)  \setminus B\left(
x_{0},2r\right)  }\frac{d(x,y)}{d(x_{0},z)^{Q+1}}|b(z)-b_{B}||f(z)-f_{2B}%
|dz\nonumber\\
&  \leq cr\int_{B\left(  \overline{x},8R\right)  \setminus B\left(
x_{0},2r\right)  }\frac{1}{d(x_{0},z)^{Q+1}}|b(z)-b_{B}||f(z)-f_{2B}%
|dz.\nonumber
\end{align}
Let $B_{j}=B\left(  x_{0},2^{j}r\right)  $, let $n$ be such that
$2^{n}r<9R\leq2^{n+1}r$. Then%

\begin{align*}
&  \left\vert \widetilde{T}((b-b_{B})f_{3})(x)-\widetilde{T}((b-b_{B}%
)f_{3})(y)\right\vert \\
&  \leq cr\sum_{j=1}^{n}\int_{B_{j+1}\setminus B_{j}}\frac{|b(z)-b_{B}%
||f(z)-f_{2B}|}{d(x_{0},z)^{Q+1}}dz\\
&  \leq cr\sum_{j=1}^{n}\frac{1}{(2^{j}r)^{Q+1}}\int_{B_{j+1}}|b(z)-b_{B}%
||f(z)-f_{2B}|dz\\
&  \leq c\sum_{j=1}^{n}\frac{1}{2^{j}|B_{j+1}|}\left(  \int_{B_{j+1}%
}|b(z)-b_{B}|^{2}dz\right)  ^{1/2}\left(  \int_{B_{j+1}}|f(z)-f_{2B}%
|^{2}dz\right)  ^{1/2}\\
&  \leq c\sum_{j=1}^{n}\frac{1}{2^{j}}\left(  \frac{1}{|B_{j+1}|}{\int
}_{B_{j+1}}|b(z)-b_{B}|^{2}dz\right)  ^{1/2}\left(  \frac{1}{|B_{j+1}|}{\int
}_{B_{j+1}}|f(z)-f_{2B}|^{2}dz\right)  ^{1/2}\,.
\end{align*}
We observe that, reasoning like in (\ref{9-})
\begin{align}
&  \left(  \frac{1}{|B_{j+1}|}\int_{B_{j+1}}|f(z)-f_{2B}|^{2}dz\right)
^{1/2}\label{4}\\
&  \leq cj\left(  \frac{1}{|B_{j+1}|}\int_{B_{j+1}}|f(z)-f_{B_{j+1}}%
|^{2}dz\right)  ^{1/2}\nonumber\\
&  \leq cj[f]_{BMO_{loc}\left(  B\left(  \overline{x},R;19R\right)  \right)
}\,,\nonumber
\end{align}
and in the same way
\begin{equation}
\left(  \frac{1}{|B_{j+1}|}\int_{B_{j+1}}|b(z)-b_{B}|^{2}dz\right)  ^{1/2}\leq
c\left(  j+1\right)  [b]_{BMO_{loc}\left(  B\left(  \overline{x},R;19R\right)
\right)  }. \label{3}%
\end{equation}

Then, by \eqref{4} and \eqref{3}, \eqref{5} gives%
\begin{align*}
&  \left\vert \widetilde{T}((b-b_{B})f_{3})(x)-\widetilde{T}((b-b_{B}%
)f_{3})(y)\right\vert \\
&  \leq c\sum_{j=1}^{\infty}\frac{1}{2^{j}}j\left(  j+1\right)  [b]_{BMO_{loc}%
\left(  B\left(  \overline{x},R;19R\right)  \right)  }[f]_{BMO_{loc}\left(
B\left(  \overline{x},R;19R\right)  \right)  }\\
&  \leq c[b]_{BMO_{loc}\left(  B\left(  \overline{x},R;19R\right)  \right)
}[f]_{BMO_{loc}\left(  B\left(  \overline{x},R;19R\right)  \right)  }.
\end{align*}

Hence%
\begin{equation}
\frac{1}{|B|}\int_{B}|III_{4}|dx\leq c[b]_{BMO_{loc}\left(  B\left(
\overline{x},R;19R\right)  \right)  }[f]_{BMO_{loc}\left(  B\left(
\overline{x},R;19R\right)  \right)  }. \label{10}%
\end{equation}

From (\ref{lunga 1}), (\ref{lunga 2}), \eqref{7}, \eqref{11}, \eqref{9}, and
\eqref{10} we have%
\begin{align*}
&  [\widetilde{T}_{b}f]_{BMO_{loc}\left(  B\left(  \overline{x},R;3R\right)
\right)  }\\
&  \leq c[b]_{LMO_{loc}\left(  B\left(  \overline{x},R;KR\right)  \right)
}\left\{  \left[  f\right]  _{BMO_{loc}\left(  B\left(  \overline
{x},R;13R\right)  \right)  }+\frac{c}{\left\vert B\left(  \overline
{x},R\right)  \right\vert ^{1/p}}\Vert f\Vert_{L^{p}\left(  B\left(
\overline{x},R\right)  \right)  }\right\} \\
&  +c[b]_{BMO_{loc}\left(  B\left(  \overline{x},4R;KR\right)  \right)
}[f]_{BMO_{loc}\left(  B\left(  \overline{x},R;KR\right)  \right)  }+\\
&  +c[b]_{LMO_{loc}\left(  B\left(  \overline{x},R;3R\right)  \right)
}\left[  f\right]  _{BMO_{loc}\left(  B\left(  \overline{x},R;13R\right)
\right)  }+\\
&  +\frac{c}{\left\vert B\left(  \overline{x},R\right)  \right\vert ^{1/p}%
}\Vert f\Vert_{L^{p}\left(  B\left(  \overline{x},R\right)  \right)
}[b]_{BMO_{loc}\left(  B\left(  \overline{x},R;3R\right)  \right)  }+\\
&  +c[b]_{LMO_{loc}\left(  B\left(  \overline{x},R;3R\right)  \right)
}[f]_{BMO_{loc}\left(  B\left(  \overline{x},R;5R\right)  \right)  }+\\
&  +c[b]_{BMO_{loc}\left(  B\left(  \overline{x},R;19R\right)  \right)
}[f]_{BMO_{loc}\left(  B\left(  \overline{x},R;19R\right)  \right)  }%
\end{align*}%
\[
\leq c[b]_{LMO_{loc}\left(  B\left(  \overline{x},4R;KR\right)  \right)
}\left\{  [f]_{BMO_{loc}\left(  B\left(  \overline{x},R;KR\right)  \right)
}+\frac{c}{\left\vert B\left(  \overline{x},R\right)  \right\vert ^{1/p}}\Vert
f\Vert_{L^{p}\left(  B\left(  \overline{x},R\right)  \right)  }\right\}
\]
by Lemma \ref{Lem KR}%
\[
\leq c[b]_{LMO_{loc}\left(  B\left(  \overline{x},4R;KR\right)  \right)
}\cdot\left\{  \lbrack f]_{BMO_{loc}\left(  B\left(  \overline{x},R;3R\right)
\right)  }+\frac{1}{\left\vert B\left(  \overline{x},R\right)  \right\vert
^{1/p}}\Vert f\Vert_{L^{p}\left(  B\left(  \overline{x},R\right)  \right)
}\right\}
\]
and (\ref{commutator}) is proved. The last assertion in the statement of the
theorem then follows by (\ref{eta f}).
\end{proof}

\subsection{Estimates for singular integrals with variable
kernels\label{subsec variable singular}}

Now we are ready to prove the two theorems stated at the beginning of this
section, regarding singular integrals with variable kernels and their commutator.

\bigskip

\begin{proof}
[Proof of Theorem \ref{Thm Singular integral variable kernel}]Let $f\in
BMO_{loc}\left(  B\left(  \overline{x},R;3R\right)  \right)  $ with
sprt$f\subset B\left(  \overline{x},R\right)  $, $B\left(  \overline
{x},3R\right)  \subset\Omega$. By the expansion in spherical harmonics
(\ref{spher expansion}) and Theorem \ref{Thm multiplication},%
\begin{align*}
&  \left[  K_{ij}f\right]  _{BMO_{loc}\left(  B\left(  \overline
{x},R;3R\right)  \right)  }\leq\sum_{m=1}^{\infty}\sum_{k=1}^{g_{m}}\left[
c_{ij}^{km}\,T_{km}f\right]  _{BMO_{loc}\left(  B\left(  \overline
{x},R;3R\right)  \right)  }\\
&  \leq C\sum_{m=1}^{\infty}\sum_{k=1}^{g_{m}}\left(  \left\Vert c_{ij}%
^{km}\right\Vert _{L^{\infty}\left(  B\left(  \overline{x},3R\right)  \right)
}+\left[  c_{ij}^{km}\right]  _{LMO_{loc}\left(  B\left(  \overline
{x},R;3R\right)  \right)  }\right)  \cdot\\
&  \cdot\left(  \left[  T_{km}f\right]  _{BMO_{loc}\left(  B\left(
\overline{x},R,KR\right)  \right)  }+\frac{\Vert T_{km}f\Vert_{L^{p}%
(B(\overline{x},KR))}}{|B(\overline{x},R)|^{1/p}}\right)  \,.
\end{align*}
We now apply Theorem \ref{2.7} with Remark \ref{remark modified T}, getting%
\begin{align*}
\left[  T_{km}f\right]  _{BMO_{loc}\left(  B\left(  \overline{x},R;KR\right)
\right)  }  &  \leq c\left(  p,G,k,m\right)  \left[  f\right]  _{BMO_{loc}%
\left(  B\left(  \overline{x},R;K^{\prime}R\right)  \right)  }\\
\left\Vert T_{km}f\right\Vert _{L^{p}\left(  B\left(  \overline{x},KR\right)
\right)  }  &  \leq c\left(  p,G,k,m\right)  \left\Vert f\right\Vert
_{L^{p}\left(  B\left(  \overline{x},KR\right)  \right)  }%
\end{align*}
where the constant $c\left(  p,G,k,m\right)  $ depends on $k,m$ only through
the constants $A,B$ appearing in (\ref{standard 1}), (\ref{standard 2}) when
the kernel is $H_{km}$. In turn, these constants are bounded in terms of%
\[
\sup_{\left\vert u\right\vert =1}\left\vert H_{km}\left(  u\right)
\right\vert +\sup_{\left\vert u\right\vert =1}\left\vert \nabla H_{km}\left(
u\right)  \right\vert \leq c(G)\cdot m^{\frac{N}{2}}.
\]
by (\ref{D Y_km}). A standard linearity argument shows then that actually%
\[
c\left(  p,G,k,m\right)  \leq c(p,G)\cdot m^{\frac{N}{2}}.
\]
Next, we use the bounds on the coefficients $c_{ij}^{km}$ contained in
(\ref{sup fourier}) and Theorem \ref{Thm unif bound fourier}. They assure that
for any positive integer $n$ there exists a constant $c(n,G,\Lambda)$ such
that%
\begin{align*}
\left\Vert c_{ij}^{km}\right\Vert _{L^{\infty}}+\left[  c_{ij}^{km}\right]
_{LMO_{loc}\left(  B\left(  \overline{x},R;3R\right)  \right)  }  &  \leq
c(n,G,\Lambda)\cdot m^{-2n}\left(  1+\left[  A\right]  _{LMO_{loc}\left(
B\left(  \overline{x},R;3R\right)  \right)  }\right) \\
&  \leq c(n,G,\Lambda)\cdot m^{-2n}\left(  1+\left[  A\right]  _{LMO_{loc}%
\left(  \Omega\right)  }\right)
\end{align*}
where $A=\left\{  a_{ij}\right\}  _{i,j=1}^{q}$. Recalling also the bound
(\ref{g_m}) on the number $g_{m},$ we get%
\begin{align*}
&  \left[  K_{ij}f\right]  _{BMO_{loc}\left(  B\left(  \overline
{x},R,3R\right)  \right)  }\leq C\sum_{m=1}^{\infty}c(N)m^{N-2}\cdot
c(n,G,\Lambda)m^{-2n}\left(  1+\left[  A\right]  _{LMO_{loc}\left(
\Omega\right)  }\right)  \cdot\\
&  \cdot c(p,G)\cdot m^{\frac{N}{2}}\left(  \left[  f\right]  _{BMO_{loc}%
\left(  B\left(  \overline{x},R;K^{\prime}R\right)  \right)  }+\frac{\Vert
f\Vert_{L^{p}(B(\overline{x},R))}}{|B(\overline{x},R)|^{1/p}}\right) \\
&  =c\left(  G\right)  \left(  1+\left[  A\right]  _{LMO_{loc}\left(
\Omega\right)  }\right)  \left(  \left[  f\right]  _{BMO_{loc}\left(  B\left(
\overline{x},R;K^{\prime}R\right)  \right)  }+\frac{\Vert f\Vert
_{L^{p}(B(\overline{x},R))}}{|B(\overline{x},R)|^{1/p}}\right)  \cdot\\
&  \cdot\sum_{m=1}^{\infty}c(n,G,\Lambda)m^{N+\frac{N}{2}-2-2n}\\
&  =c\left(  p,G,\Lambda\right)  \left(  1+\left[  A\right]  _{LMO_{loc}%
\left(  \Omega\right)  }\right)  \left(  \left[  f\right]  _{BMO_{loc}\left(
B\left(  \overline{x},R;K^{\prime}R\right)  \right)  }+\frac{\Vert
f\Vert_{L^{p}(B(\overline{x},R))}}{|B(\overline{x},R)|^{1/p}}\right)
\end{align*}
where we have finally fixed $n$ large enough to make the series converge. So
the theorem is proved.
\end{proof}

\bigskip

\begin{proof}
[Proof of Theorem \ref{Thm commutator variable kernel}]The proof is similar to
the previous one. We have:%
\begin{align*}
&  [C\left[  K_{ij},b\right]  f]_{BMO_{loc}\left(  B\left(  \overline
{x},R;3R\right)  \right)  }\leq\sum_{m=1}^{\infty}\sum_{k=1}^{g_{m}}\left[
c_{ij}^{km}\,C\left[  T_{km},b\right]  f\right]  _{BMO_{loc}\left(  B\left(
\overline{x},R;3R\right)  \right)  }\\
&  \leq C\sum_{m=1}^{\infty}\sum_{k=1}^{g_{m}}\left(  \left\Vert c_{ij}%
^{km}\right\Vert _{L^{\infty}\left(  B\left(  \overline{x},3R\right)  \right)
}+\left[  c_{ij}^{km}\right]  _{LMO_{loc}\left(  B\left(  \overline
{x},R;3R\right)  \right)  }\right)  \cdot\\
&  \cdot\left(  \left[  C\left[  T_{km},b\right]  f\right]  _{BMO_{loc}\left(
B\left(  \overline{x},R;KR\right)  \right)  }+\frac{\Vert C\left[
T_{km},b\right]  f\Vert_{L^{p}(B(\overline{x},KR))}}{|B(\overline{x}%
,R)|^{1/p}}\right)  \,.
\end{align*}
Next, by Theorem \ref{Thm commutator convolution} and Remark
\ref{remark modified Tb},%
\begin{align*}
&  \left[  C\left[  T_{km},b\right]  f\right]  _{BMO_{loc}\left(  B\left(
\overline{x},R;KR\right)  \right)  }\leq c(p,G)\cdot m^{\frac{N}{2}%
}[b]_{LMO_{loc}B\left(  \overline{x},HR,K^{\prime}R\right)  }\cdot\\
\cdot &  \left(  [f]_{BMO_{loc}\left(  B\left(  \overline{x},R;KR\right)
\right)  }+\frac{1}{\left\vert B\left(  \overline{x},R\right)  \right\vert
^{1/p}}\Vert f\Vert_{L^{p}\left(  B\left(  \overline{x},R\right)  \right)
}\right)
\end{align*}
while, by the commutator theorem on $L^{p}$ (see (\ref{commutator Lp})),%
\[
\Vert C\left[  T_{km},b\right]  f\Vert_{L^{p}(B(\overline{x},KR))}\leq
c(p,G)\cdot m^{\frac{N}{2}}[b]_{LMO_{loc}B\left(  \overline{x},HR,K^{\prime
}R\right)  }\Vert f\Vert_{L^{p}\left(  B\left(  \overline{x},R\right)
\right)  }%
\]

Reasoning like in the previous proof we conclude%
\begin{align*}
&  [C\left[  K_{ij},b\right]  f]_{BMO_{loc}\left(  B\left(  \overline
{x},R;3R\right)  \right)  }\leq c(p,G,\Lambda)\left(  1+\left[  A\right]
_{LMO_{loc}\left(  \Omega\right)  }\right)  [b]_{LMO_{loc}\left(  B\left(
\overline{x},HR;K^{\prime}R\right)  \right)  }\cdot\\
&  \cdot\left(  \lbrack f]_{BMO_{loc}\left(  B\left(  \overline{x}%
,R;KR\right)  \right)  }+\frac{1}{\left\vert B\left(  \overline{x},R\right)
\right\vert ^{1/p}}\Vert f\Vert_{L^{p}\left(  B\left(  \overline{x},R\right)
\right)  }\right)  ,
\end{align*}
which by Lemma \ref{Lem KR} gives the result.
\end{proof}

\section{Local $BMO^{p}$ estimates for second order derivatives}

In this section we will prove our main result. As already explained in the
introduction, the proof will proceed in three steps corresponding to the three subsections.

\subsection{Local estimates for functions with small compact support}

\begin{theorem}
\label{Thm local compact supp}For every $p\in\left(  1,\infty\right)  $ there
exist positive constants $C,R_{0},$ depending on the group $G$, the numbers
$p$ and $\Lambda$, and the $VLMO_{loc}$ moduli of the coefficients $a_{hk}$,
and an absolute constant $K^{\prime}$, such that for every $\overline{x}%
\in\Omega$ with $R\leq R_{0}$ $B\left(  \overline{x},K^{\prime}R\right)
\subset\Omega$ and every $u\in S_{loc}^{2,p,\ast}\left(  B\left(  \overline
{x},R;3R\right)  \right)  $ with sprt$u\subset B\left(  \overline{x},R\right)
$ we have, for $i,j=1,2,...,q,$%
\begin{align*}
\left\Vert X_{i}X_{j}\,u\right\Vert _{BMO_{loc}^{p}\left(  B\left(
\overline{x},R;3R\right)  \right)  }  &  \leq C\left(  1+\left[  A\right]
_{LMO_{loc}\left(  \Omega\right)  }\right)  \cdot\\
&  \cdot\left\{  \left\Vert Lu\right\Vert _{BMO_{loc}^{p}\left(  B\left(
\overline{x},R;3R\right)  \right)  }+\frac{\Vert Lu\Vert_{L^{p}(B(\overline
{x},R))}}{|B(\overline{x},R)|^{1/p}}\right\}  .
\end{align*}

\end{theorem}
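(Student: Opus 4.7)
The plan is to start from the representation formula (\ref{rep formula astratta}), valid for $u \in S^{2,p}$ with compact support in $\Omega$,
\begin{equation*}
X_i X_j u = K_{ij}(Lu) - \sum_{h,k=1}^q C[K_{ij}, a_{hk}](X_h X_k u) + \alpha_{ij}\, Lu,
\end{equation*}
and to take the $BMO^p_{loc}(B(\overline{x}, R; 3R))$ norm of both sides. The three right-hand side terms will be matched with the three main tools of \S 4: the singular integral Theorem \ref{Thm Singular integral variable kernel} controls the $K_{ij}(Lu)$ term directly by the right-hand side of the claim; the multiplication Theorem \ref{Thm multiplication} handles $\alpha_{ij} Lu$, using the uniform bound (\ref{unif bound alfa_ij}) of Theorem \ref{Thm unif BB1} together with the $LMO_{loc}$ bound (\ref{bound alfa ij}) of Theorem \ref{Thm unif bound fourier}; and the commutator Theorem \ref{Thm commutator variable kernel} controls each $C[K_{ij}, a_{hk}](X_h X_k u)$. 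Since sprt$\,u \subset B(\overline{x}, R)$, the functions $Lu$ and $X_h X_k u$ all share this support, so the support hypotheses of the three tool theorems are met.

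The crux is the absorption trick on the commutator terms. Theorem \ref{Thm commutator variable kernel} produces the multiplicative factor $[a_{hk}]_{LMO_{loc}(B(\overline{x}, HR; KR))}$ in front of a $BMO_{loc}(B(\overline{x}, R; 3R))$ seminorm of $X_h X_k u$, which is the very quantity being bounded on the left. By (\ref{eta f}) and the $VLMO_{loc}$ hypothesis on the matrix $A$, this factor is dominated by $\eta_A(2KR)$, which tends to zero as $R \to 0$. Choosing $R_0$ so small that $C\, q^2\, (1+[A]_{LMO_{loc}(\Omega)})\, \eta_A(2KR_0) \leq 1/2$, the sum of commutator contributions involving $[X_h X_k u]_{BMO_{loc}}$ can be absorbed into the left-hand side. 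The residual $L^p$-type term $|B(\overline{x}, R)|^{-1/p}\|X_h X_k u\|_{L^p(B(\overline{x}, R))}$ produced by the same commutator estimate is \emph{not} absorbed by this trick; to handle it I would invoke the classical $L^p$ a priori estimate of \cite{bb2} for operators of the form (\ref{nonvar 1}) in homogeneous groups under the $VMO$ (and hence $VLMO$) assumption, which gives $\|X_h X_k u\|_{L^p(B(\overline{x}, R))} \leq C\|Lu\|_{L^p(B(\overline{x}, R))}$ for $u$ with compact support in $B(\overline{x}, R)$. The same inequality simultaneously furnishes the $L^p$ piece needed to complete the $BMO^p_{loc}$ norm on the left.

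The principal obstacle is the interlocking of radii. Each tool theorem reads information on a slightly enlarged concentric ball: Theorem \ref{Thm Singular integral variable kernel} reaches out to a ball of radius proportional to $KR$ via the multiplication step inside its proof; Theorem \ref{Thm commutator variable kernel} requires information on the annulus $B(\overline{x}, HR; KR)$ for the $LMO$-factor of $b = a_{hk}$; and the application of Theorem \ref{Thm multiplication} to $\alpha_{ij} Lu$ enlarges the relevant ball once more. One must therefore fix the absolute constant $K'$ in the statement as the maximum of all these enlargement factors, so that the single hypothesis $B(\overline{x}, K'R) \subset \Omega$ underwrites every invocation. Once this bookkeeping is in place and $R_0$ has been chosen below the absorption threshold, collecting the three estimates, applying the $L^p$ bound to the orphaned $L^p$ terms, and rearranging yields the claimed inequality with a constant depending on $p, G, \Lambda$ and the $VLMO_{loc}$ moduli of the $a_{hk}$'s.
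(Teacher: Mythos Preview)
Your proposal is correct and follows essentially the same route as the paper's proof: start from the representation formula (\ref{rep formula astratta}), apply Theorems \ref{Thm Singular integral variable kernel}, \ref{Thm commutator variable kernel}, and \ref{Thm multiplication} (together with the bounds (\ref{unif bound alfa_ij}) and (\ref{bound alfa ij}) on the $\alpha_{ij}$'s) to the three pieces, use the $VLMO_{loc}$ smallness via (\ref{eta f}) to absorb the $BMO_{loc}$ seminorms of $X_hX_ku$ into the left-hand side, and finish off the residual $L^p$ terms with the known $L^p$ a priori estimate for compactly supported $u$. Two minor remarks: the paper invokes \cite{bb1} (the homogeneous-group paper) rather than \cite{bb2} for that last $L^p$ step, and it explicitly calls Lemma~\ref{Lem KR} to pass between the $BMO_{loc}(B(\overline{x},R;KR))$ and $BMO_{loc}(B(\overline{x},R;3R))$ seminorms when handling the multiplication term---this is the concrete mechanism behind your ``interlocking of radii'' bookkeeping.
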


\begin{proof}
Let us start from (\ref{rep formula astratta}) for $u\in S_{loc}^{2,p,\ast
}\left(  B\left(  \overline{x},R;3R\right)  \right)  $ and take $BMO_{loc}%
\left(  B\left(  \overline{x},R;3R\right)  \right)  $ seminorms of both sides:%
\begin{align*}
&  \left[  X_{i}X_{j}\,u\right]  _{BMO_{loc}\left(  B\left(  \overline
{x},R;3R\right)  \right)  }\leq\left[  K_{ij}\left(  Lu\right)  \right]
_{BMO_{loc}\left(  B\left(  \overline{x},R;3R\right)  \right)  }+\\
&  +\sum_{h,k=1}^{q}\left[  C\left[  K_{ij},a_{hk}\right]  \left(  X_{h}%
X_{k}\,u\right)  \right]  _{BMO_{loc}\left(  B\left(  \overline{x}%
,R;3R\right)  \right)  }+\left[  \alpha_{ij}\cdot Lu\right]  _{BMO_{loc}%
\left(  B\left(  \overline{x},R;3R\right)  \right)  }%
\end{align*}
by Theorems \ref{Thm Singular integral variable kernel},
\ref{Thm commutator variable kernel}, \ref{Thm multiplication}, Lemma
\ref{Lem KR}, and the bounds (\ref{unif bound alfa_ij}), (\ref{bound alfa ij})
on the $\alpha_{ij}$'s,%
\begin{align*}
&  \leq C\left(  1+\left[  A\right]  _{LMO_{loc}\left(  \Omega\right)
}\right)  \left\{  \left[  Lu\right]  _{BMO_{loc}\left(  B\left(  \overline
{x},R;3R\right)  \right)  }+\frac{\Vert Lu\Vert_{L^{p}(B(\overline{x},R))}%
}{|B(\overline{x},R)|^{1/p}}\right. \\
&  +\left.  \sum_{h,k=1}^{q}[a_{hk}]_{LMO_{loc}\left(  B\left(  \overline
{x},HR;KR\right)  \right)  }\left(  [X_{h}X_{k}\,u]_{BMO_{loc}\left(  B\left(
\overline{x},R;3R\right)  \right)  }\right.  \right. \\
&  \left.  \left.  +\frac{1}{\left\vert B\left(  \overline{x},R\right)
\right\vert ^{1/p}}\Vert X_{h}X_{k}\,u\Vert_{L^{p}\left(  B\left(
\overline{x},R\right)  \right)  }\right)  \right\}  .
\end{align*}
We now exploit the assumption\ $a_{hk}\in VLMO_{loc}\left(  \Omega\right)  $:
for any $\varepsilon>0,$ by (\ref{eta f}) there exists $R_{0}$ such that for
any $R\leq R_{0}$%
\[
\sum_{h,k=1}^{q}\left[  a_{hk}\right]  _{LMO_{loc}\left(  B\left(
\overline{x},HR;KR\right)  \right)  }\leq\varepsilon.
\]
Choosing $\varepsilon$ such that
\[
C\left(  1+\left[  A\right]  _{LMO_{loc}\left(  \Omega\right)  }\right)
\varepsilon\leq\frac{1}{2}%
\]
we get, for $R\leq R_{0},$ $R_{0}$ depending on the $VLMO_{loc}$ moduli of the
$a_{hk}$'s,%
\begin{align*}
&  \sum_{i,j=1}^{q}\left[  X_{i}X_{j}\,u\right]  _{BMO_{loc}\left(  B\left(
\overline{x},R;3R\right)  \right)  }\leq C\left(  1+\left[  A\right]
_{LMO_{loc}\left(  \Omega\right)  }\right)  \cdot\\
&  \cdot\left\{  \left[  Lu\right]  _{BMO_{loc}\left(  B\left(  \overline
{x},R;3R\right)  \right)  }+\frac{1}{|B(\overline{x},R)|^{1/p}}\left[  \Vert
Lu\Vert_{L^{p}(B(\overline{x},R))}+\sum_{i,j=1}^{q}\Vert X_{i}X_{j}%
\,u\Vert_{L^{p}\left(  B\left(  \overline{x},R\right)  \right)  }\right]
\right\}  .
\end{align*}
Also exploiting the known $L^{p}$ estimates on $X_{h}X_{k}\,u$ (see
\cite{bb1}) we conclude:%
\begin{align*}
\sum_{i,j=1}^{q}\left\Vert X_{i}X_{j}\,u\right\Vert _{BMO_{loc}^{p}\left(
B\left(  \overline{x},R;3R\right)  \right)  }  &  \leq C\left(  1+\left[
A\right]  _{LMO_{loc}\left(  \Omega\right)  }\right)  \cdot\\
&  \cdot\left\{  \left\Vert Lu\right\Vert _{BMO_{loc}^{p}\left(  B\left(
\overline{x},R;3R\right)  \right)  }+\frac{\Vert Lu\Vert_{L^{p}(B(\overline
{x},R))}}{|B(\overline{x},R)|^{1/p}}\right\}  .
\end{align*}

\end{proof}

\subsection{Local estimates for functions with noncompact support}

To remove from our basic estimate in Theorem \ref{Thm local compact supp} the
assumption of compact support of the function $u$ we need to use, as usual,
cutoff functions and interpolation inequalities to handle the norms of first
order derivatives of $u$. However, it turns out that the local norms of type
$BMO_{loc}^{p}\left(  B\left(  \overline{x},R;3R\right)  \right)  $ that have
been useful to prove Theorem \ref{Thm local compact supp} are not adequate to
establish interpolation inequalities. Instead, we have to use to this aim the
more standard $BMO^{p}\left(  B\left(  \overline{x},R\right)  \right)  $
norms, which also allows us to apply directly some results proved in
\cite{BB3}. We have the following:

\begin{theorem}
\label{Thm noncomact}For every $p\in\left(  1,\infty\right)  $ there exist
positive constant $C,R_{0},$ depending on the group $G$, the numbers $p$ and
$\Lambda$, and the $VLMO_{loc}$ moduli of the coefficients $a_{hk}$, such that
for every $\overline{x}\in\Omega$ with $B\left(  \overline{x},R_{0}\right)
\subset\Omega$, every $t,R^{\prime}$ with $R_{0}/2\leq t<R^{\prime}\leq
R_{0},$ every $u\in S^{2,p,\ast}\left(  B\left(  \overline{x},R^{\prime
}\right)  \right)  $%
\[
\left\Vert u\right\Vert _{S^{2,p,\ast}\left(  B\left(  \overline{x},t\right)
\right)  }\leq C\left(  \frac{1}{\left(  R^{\prime}-t\right)  ^{\gamma
^{\prime}}}+1\right)  \left(  \left\Vert u\right\Vert _{BMO^{p}\left(
B\left(  \overline{x},R^{\prime}\right)  \right)  }+\left\Vert Lu\right\Vert
_{BMO^{p}\left(  B\left(  \overline{x},R^{\prime}\right)  \right)  }\right)
.
\]

\end{theorem}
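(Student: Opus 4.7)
The plan is to reduce to the compact-support estimate of Theorem \ref{Thm local compact supp} by multiplying $u$ by a smooth cutoff, and then to remove the resulting first-order-derivative terms via a BMO-interpolation inequality combined with a standard absorption iteration in the radius.

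First I would choose auxiliary radii $t<s<s'<R'$ with $s-t=s'-s=R'-s'=(R'-t)/3$, and a cutoff $\phi$ with $B(\overline{x},s)\prec\phi\prec B(\overline{x},s')$ satisfying $|X_i\phi|\le c/(R'-t)$ and $|X_iX_j\phi|\le c/(R'-t)^2$. Set $v=u\phi$; then $\mathrm{sprt}\, v\subset B(\overline{x},s')\Subset B(\overline{x},R')$ and, shrinking $R_0$ if necessary so that $3s'$ is covered by Theorem \ref{Thm local compact supp}, we have $v\in S^{2,p,\ast}_{loc}(B(\overline{x},s';3s'))$. A direct computation gives
\[
Lv=\phi\, Lu+2\sum_{i,j=1}^{q}a_{ij}\,(X_i\phi)(X_j u)+u\,L\phi,
\]
so that Theorem \ref{Thm local compact supp} applied to $v$ yields a control of $\|X_iX_j v\|_{BMO^{p}_{loc}(B(\overline{x},s';3s'))}$ in terms of $Lu$, $X_j u$ and $u$, with the cutoff derivatives contributing factors $1/(R'-t)$ and $1/(R'-t)^2$.

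Next I would translate the estimate to the ``standard'' BMO scale. Using Proposition \ref{Prop norme locali globali}(a) on the left, the quantity $\|X_iX_j u\|_{BMO^{p}(B(\overline{x},t))}$ is bounded by $\|X_iX_j v\|_{BMO^{p}_{loc}(B(\overline{x},t;3t))}$, which in turn is dominated by the $BMO^{p}_{loc}(B(\overline{x},s';3s'))$ norm up to a factor depending on $1/(R'-t)$. On the right-hand side I would invoke Proposition \ref{Prop norme locali globali}(b), Theorem \ref{Thm multiplication} and Corollary \ref{coroll molt smooth} to pass from $BMO^{p}_{loc}$ norms of $\phi Lu$, $(X_i\phi)(X_j u)$ and $uL\phi$ (computed on $(B(\overline{x},s';3s'))$) to $BMO^{p}(B(\overline{x},R'))$ norms of $Lu$, $X_j u$ and $u$, at the cost of a factor $(R'-t)^{-\gamma_1}$ with some explicit $\gamma_1$ depending only on $p$ and $G$.

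The main obstacle is now to absorb the resulting first-order term $\sum_j\|X_j u\|_{BMO^{p}(B(\overline{x},s'))}$ on the right. To this end I would adapt the BMO interpolation inequality of \cite{BB3}, which provides, for every $\varepsilon>0$,
\[
\sum_{j}\|X_j u\|_{BMO^{p}(B(\overline{x},s'))}\le\varepsilon\sum_{i,j}\|X_iX_j u\|_{BMO^{p}(B(\overline{x},R'))}+\frac{C_\varepsilon}{(R'-t)^{\gamma_2}}\|u\|_{BMO^{p}(B(\overline{x},R'))}.
\]
At this stage the estimate has the schematic form
\[
F(t)\le \frac{C}{(R'-t)^{\gamma_1}}\Big(\|Lu\|_{BMO^p}+\|u\|_{BMO^p}\Big)+\varepsilon\,F(R')+\frac{C_\varepsilon}{(R'-t)^{\gamma_1+\gamma_2}}\|u\|_{BMO^p},
\]
where $F(r)=\sum\|X_iX_j u\|_{BMO^p(B(\overline{x},r))}$. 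A classical iteration (Campanato's absorption lemma) applied on a geometric sequence of intermediate radii between $t$ and $R'$ then swallows the $\varepsilon F(R')$ term and produces a bound of the form announced, with $\gamma'=\gamma_1+\gamma_2$. The first-order and zero-order parts of the $S^{2,p,\ast}$ norm are finally added back using the interpolation once more; this is routine. The delicate point, as in \cite{BB3}, is to verify that the interpolation constant in the BMO scale really has the polynomial dependence on $R'-t$ required for the iteration to close, which is exactly where the argument of \cite{BB3} has to be adapted to the local $BMO$ framework of the present paper.
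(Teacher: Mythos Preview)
Your proposal is correct and follows essentially the same route as the paper: cutoff to reduce to Theorem \ref{Thm local compact supp}, convert between the local $BMO^{p}_{loc}(B;3B)$ and the global $BMO^{p}(B)$ scales via Proposition \ref{Prop norme locali globali}, expand $L(u\phi)$, kill the first-order term with the $BMO$ interpolation inequality of \cite{BB3}, and close with the absorption lemma \cite[Lemma 4.14]{BB3}. The only cosmetic differences are your choice of three equal gaps $t<s<s'<R'$ versus the paper's $t<s<R<R'$ with $s=(t+R)/2$, and that the paper invokes \cite[Lemma 4.4(ii), Lemma 4.12]{BB3} directly where you use Proposition \ref{Prop norme locali globali} and Corollary \ref{coroll molt smooth}; also note that the passage from $BMO^{p}_{loc}(B(\overline{x},t;3t))$ to $BMO^{p}_{loc}(B(\overline{x},s';3s'))$ is free by monotonicity and does not actually cost a factor $1/(R'-t)$.
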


\begin{proof}
Fix four numbers $R^{\prime}>R>s>t>0\ $and a cutoff function $\phi\in
C_{0}^{\infty}\left(  B\left(  \overline{x},R^{\prime}\right)  \right)  $ such
that
\[
B\left(  \overline{x},t\right)  \prec\phi\prec B\left(  \overline{x},s\right)
.
\]
Then for any function $u\in S^{2,p,\ast}\left(  B\left(  \overline
{x},R^{\prime}\right)  \right)  $ we have, by \cite[Lemma 4.4, (ii)]{BB3}:%
\[
\left\Vert X_{i}X_{j}u\right\Vert _{BMO^{p}\left(  B\left(  \overline
{x},t\right)  \right)  }=\left\Vert X_{i}X_{j}\left(  u\phi\right)
\right\Vert _{BMO^{p}\left(  B\left(  \overline{x},t\right)  \right)  }\leq
c\left\Vert X_{i}X_{j}\left(  u\phi\right)  \right\Vert _{BMO^{p}\left(
B\left(  \overline{x},s\right)  \right)  }%
\]
by Proposition \ref{Prop norme locali globali} (a) and applying Theorem
\ref{Thm local compact supp} to $\phi u$ on $B\left(  \overline{x}%
,s;3s\right)  $:%
\[
\leq c\left\Vert X_{i}X_{j}\left(  u\phi\right)  \right\Vert _{BMO_{loc}%
^{p}\left(  B\left(  \overline{x},s;3s\right)  \right)  }\leq C_{A}\left\{
\left\Vert L\left(  u\phi\right)  \right\Vert _{BMO_{loc}^{p}\left(  B\left(
\overline{x},s;3s\right)  \right)  }+\frac{\Vert L\left(  u\phi\right)
\Vert_{L^{p}(B(\overline{x},s))}}{s^{Q/p}}\right\}
\]
by \eqref{stimaseminorma}%
\[
\leq C_{A}\left\{  \left[  L\left(  u\phi\right)  \right]  _{BMO\left(
B\left(  \overline{x},R\right)  \right)  }+\left(  \frac{1}{\left(
R-s\right)  ^{Q/p}}+\frac{1}{s^{Q/p}}+1\right)  \left\Vert L\left(
u\phi\right)  \right\Vert _{L^{p}\left(  B\left(  \overline{x},s\right)
\right)  }\right\}  .
\]
Now, by our choice of $\phi$ and \cite[Lemma 4.12]{BB3}%
\begin{align*}
&  \left[  L\left(  u\phi\right)  \right]  _{BMO\left(  B\left(  \overline
{x},R\right)  \right)  }\\
&  \leq\frac{c}{s-t}\left[  Lu\right]  _{BMO\left(  B\left(  \overline
{x},R\right)  \right)  }+\frac{c}{\left(  s-t\right)  ^{2}}\left[
X_{j}u\right]  _{BMO\left(  B\left(  \overline{x},R\right)  \right)  }%
+\frac{c}{\left(  s-t\right)  ^{3}}\left[  u\right]  _{BMO\left(  B\left(
\overline{x},R\right)  \right)  }%
\end{align*}
Next, for $\frac{R_{0}}{2}\leq t<R,$ we pick $s=\left(  t+R\right)  /2,$ hence%
\begin{align*}
&  \left\Vert X_{i}X_{j}u\right\Vert _{BMO^{p}\left(  B\left(  \overline
{x},t\right)  \right)  }\leq C_{A}\left\{  \frac{c}{R-t}\left[  Lu\right]
_{BMO\left(  B\left(  \overline{x},R\right)  \right)  }+\frac{c}{\left(
R-t\right)  ^{2}}\left[  X_{j}u\right]  _{BMO\left(  B\left(  \overline
{x},R\right)  \right)  }\right. \\
&  +\left.  \frac{c}{\left(  R-t\right)  ^{3}}\left[  u\right]  _{BMO\left(
B\left(  \overline{x},R\right)  \right)  }+\left(  \frac{1}{\left(
R-t\right)  ^{Q/p}}+\frac{1}{R_{0}^{Q/p}}+1\right)  \left\Vert L\left(
u\phi\right)  \right\Vert _{L^{p}\left(  B\left(  \overline{x},R\right)
\right)  }\right\}
\end{align*}
so that, adding to both sides $\left\Vert X_{j}u\right\Vert _{BMO^{p}\left(
B\left(  \overline{x},t\right)  \right)  }+\left\Vert u\right\Vert
_{BMO^{p}\left(  B\left(  \overline{x},t\right)  \right)  }$ and exploiting
the estimates on $\left\Vert u\right\Vert _{S^{2,p}\left(  B\left(
\overline{x},R\right)  \right)  }$ which are known by \cite{bb1} we can write%
\begin{align*}
&  \left\Vert u\right\Vert _{S^{2,p,\ast}\left(  B\left(  \overline
{x},t\right)  \right)  }\leq C\left\{  \frac{1}{R-t}\left[  Lu\right]
_{BMO\left(  B\left(  \overline{x},R\right)  \right)  }+\frac{1}{\left(
R-t\right)  ^{2}}\left\Vert X_{j}u\right\Vert _{BMO^{p}\left(  B\left(
\overline{x},R\right)  \right)  }\right. \\
&  \left.  +\frac{1}{\left(  R-t\right)  ^{3}}\left\Vert u\right\Vert
_{BMO^{p}\left(  B\left(  \overline{x},R\right)  \right)  }+\left(  \frac
{1}{\left(  R-t\right)  ^{2+Q/p}}+1\right)  \left[  \left\Vert Lu\right\Vert
_{L^{p}\left(  B\left(  \overline{x},R\right)  \right)  }+\left\Vert
u\right\Vert _{L^{p}\left(  B\left(  \overline{x},R\right)  \right)  }\right]
\right\}  .
\end{align*}

We can then apply the interpolation inequality for $BMO$ seminorms proved in
\cite[Thm 4.15]{BB3} and the analogous interpolation inequality for $L^{p}$
norms proved in \cite[Thm. 21]{bb1}: choosing $R^{\prime}>R$ such that
$R^{\prime}-R=R-t,$ for some $\alpha>0$ and any $\delta>0$ we have%
\[
\left\Vert X_{j}u\right\Vert _{BMO^{p}\left(  B\left(  \overline{x},R\right)
\right)  }\leq\delta\left\Vert X_{i}X_{j}u\right\Vert _{BMO^{p}\left(
B\left(  \overline{x},R^{\prime}\right)  \right)  }+\frac{c}{\delta^{\alpha
}\left(  R^{\prime}-R\right)  ^{2\alpha}}\left\Vert u\right\Vert
_{BMO^{p}\left(  B\left(  \overline{x},R^{\prime}\right)  \right)  }.
\]
Choosing $\delta=\left(  R-t\right)  ^{2}\varepsilon$, with $\varepsilon$ to
be chosen later, we get%
\begin{align*}
&  \left\Vert u\right\Vert _{S^{2,p,\ast}\left(  B\left(  \overline
{x},t\right)  \right)  }\leq C\left\{  \frac{1}{R^{\prime}-t}\left[
Lu\right]  _{BMO\left(  B\left(  \overline{x},R^{\prime}\right)  \right)
}+\varepsilon\left\Vert X_{i}X_{j}u\right\Vert _{BMO^{p}\left(  B\left(
\overline{x},R^{\prime}\right)  \right)  }\right. \\
&  \left.  +\frac{c}{\varepsilon^{\alpha}\left(  R^{\prime}-t\right)
^{2+4\alpha}}\left\Vert u\right\Vert _{BMO^{p}\left(  B\left(  \overline
{x},R^{\prime}\right)  \right)  }+\frac{1}{\left(  R-t\right)  ^{3}}\left\Vert
u\right\Vert _{BMO^{p}\left(  B\left(  \overline{x},R\right)  \right)
}\right. \\
&  \left.  +\left(  \frac{1}{\left(  R-t\right)  ^{2+Q/p}}+1\right)  \left[
\left\Vert Lu\right\Vert _{L^{p}\left(  B\left(  \overline{x},R\right)
\right)  }+\left\Vert u\right\Vert _{L^{p}\left(  B\left(  \overline
{x},R\right)  \right)  }\right]  \right\}  .
\end{align*}
For $\varepsilon$ small enough we then get%
\begin{align*}
\left\Vert u\right\Vert _{S^{2,p,\ast}\left(  B\left(  \overline{x},t\right)
\right)  }  &  \leq\frac{1}{3}\left\Vert u\right\Vert _{S^{2,p,\ast}\left(
B\left(  \overline{x},R^{\prime}\right)  \right)  }\\
&  +C\left(  \frac{1}{\left(  R^{\prime}-t\right)  ^{\gamma}}+1\right)
\left(  \left\Vert u\right\Vert _{BMO^{p}\left(  B\left(  \overline
{x},R^{\prime}\right)  \right)  }+\left\Vert Lu\right\Vert _{BMO^{p}\left(
B\left(  \overline{x},R^{\prime}\right)  \right)  }\right)
\end{align*}
for some $\gamma>0$ and any $R^{\prime}>t>R_{0}/2.$ Applying \cite[Lemma
4.14]{BB3} we finally get%
\[
\left\Vert u\right\Vert _{S^{2,p,\ast}\left(  B\left(  \overline{x},t\right)
\right)  }\leq C\left(  \frac{1}{\left(  R^{\prime}-t\right)  ^{\gamma}%
}+1\right)  \left(  \left\Vert u\right\Vert _{BMO^{p}\left(  B\left(
\overline{x},R^{\prime}\right)  \right)  }+\left\Vert Lu\right\Vert
_{BMO^{p}\left(  B\left(  \overline{x},R^{\prime}\right)  \right)  }\right)
.
\]

\end{proof}

\subsection{Interior estimates in a domain}

Theorem \ref{Thm noncomact}, by the same techniques in \cite[proof of Thm
4.8]{BB3} immediately gives the following:

\begin{theorem}
\label{Thm main variation}For any $\Omega^{\prime}\Subset\Omega,$ with
$\Omega,\Omega^{\prime}$ regular domains (see below), every $p\in\left(
1,\infty\right)  $ there exists a positive constant $C$ depending on $\Omega$,
$\Omega^{\prime}$, the group $G$, the numbers $p$ and $\Lambda$, the
$VLMO_{loc}$ moduli of the coefficients $a_{hk}$, such that for every $u\in
S^{2,p,\ast}\left(  \Omega\right)  $%
\[
\left\Vert u\right\Vert _{S^{2,p,\ast}\left(  \Omega^{\prime}\right)  }\leq
C\left(  \left\Vert u\right\Vert _{BMO^{p}\left(  \Omega\right)  }+\left\Vert
Lu\right\Vert _{BMO^{p}\left(  \Omega\right)  }\right)  .
\]

\end{theorem}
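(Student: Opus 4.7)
The idea is to derive the global interior estimate from the ball-wise estimate of Theorem \ref{Thm noncomact} by a standard finite-covering argument, the only delicate point being how to glue the local $BMO$ seminorms into a single $BMO^{p}(\Omega')$ norm.

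First, I would fix $\delta=\operatorname{dist}(\Omega',\mathbb{R}^{N}\setminus\Omega)>0$, choose $R_{0}>0$ small enough that Theorem \ref{Thm noncomact} applies (in particular $R_{0}\leq\delta/2$), and cover $\overline{\Omega'}$ by finitely many balls $B_{i}=B(\overline{x}_{i},R_{0}/2)$ with $\overline{x}_{i}\in\overline{\Omega'}$, $i=1,\dots,N$. By construction $B(\overline{x}_{i},R_{0})\subset\Omega$, so Theorem \ref{Thm noncomact} with $t=R_{0}/2$ and $R'=R_{0}$ gives, for every $i$,
\[
\Vert u\Vert_{S^{2,p,\ast}(B_{i})}\leq C\bigl(\Vert u\Vert_{BMO^{p}(B(\overline{x}_{i},R_{0}))}+\Vert Lu\Vert_{BMO^{p}(B(\overline{x}_{i},R_{0}))}\bigr)\leq C\bigl(\Vert u\Vert_{BMO^{p}(\Omega)}+\Vert Lu\Vert_{BMO^{p}(\Omega)}\bigr),
\]
with $C$ depending on $p$, $\Lambda$, $G$, the $VLMO_{loc}$ moduli of the $a_{hk}$, and on $R_{0}$, hence ultimately on $\Omega'$ and $\Omega$.

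Second, I would combine these local estimates. The $L^{p}$ parts of $\Vert u\Vert_{S^{2,p,\ast}(\Omega')}$ are immediate by subadditivity of $\Vert\cdot\Vert_{L^{p}}^{p}$ over the finite cover $\Omega'\subset\bigcup_{i}B_{i}$. The substantive step is to control $[v]_{BMO(\Omega')}$ for each $v=X_{i}X_{j}u,\,X_{i}u,\,u$, by splitting the supremum defining it according to the radius $r$ of the ball $B(x,r)$ with $x\in\Omega'$. For $r\leq R_{0}/4$ the set $B(x,r)\cap\Omega'$ is contained in a single $B(\overline{x}_{i},R_{0})$ of the cover, and using $|B(x,r)\cap\Omega'|\asymp|B(x,r)|$ one bounds the corresponding oscillation by a fixed multiple of $[v]_{BMO(B(\overline{x}_{i},R_{0}))}$. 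For $r>R_{0}/4$ the measure $|B(x,r)\cap\Omega'|$ is bounded below by a positive constant depending only on $\Omega'$, and the oscillation is then dominated by $C(\Omega')\Vert v\Vert_{L^{1}(\Omega')}\leq C(\Omega',p)\Vert v\Vert_{L^{p}(\Omega')}$, which is already under control.

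The only obstacle is the uniform regularity bound $|B(x,r)\cap\Omega'|\geq c|B(x,r)|$ (and its analogue for $\Omega$) for $x\in\Omega'$ and $r\leq R_{0}$; this is precisely what the hypothesis of \emph{regular domain} is meant to provide, in the spirit of \cite[Lemma 4.2]{BB3}, which was already used in the proof of Proposition \ref{Prop norme locali globali}. Once this is in hand, summing the finitely many local estimates and assembling the $L^{p}$ and $BMO$ pieces produces the desired inequality $\Vert u\Vert_{S^{2,p,\ast}(\Omega')}\leq C\bigl(\Vert u\Vert_{BMO^{p}(\Omega)}+\Vert Lu\Vert_{BMO^{p}(\Omega)}\bigr)$.
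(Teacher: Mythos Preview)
Your proposal is correct and follows essentially the same route as the paper, which does not spell out the argument but simply says that Theorem~\ref{Thm noncomact} ``by the same techniques in \cite[proof of Thm~4.8]{BB3} immediately gives'' Theorem~\ref{Thm main variation}; the technique referred to is precisely a finite covering of $\overline{\Omega'}$ by small balls on which Theorem~\ref{Thm noncomact} applies, together with the restriction lemma $[f]_{BMO(B)}\leq c[f]_{BMO(\Omega)}$ for regular $B\subset\Omega$ (this is \cite[Lemma~4.4]{BB3}) and the splitting of the $BMO(\Omega')$ supremum into small and large radii. One small cleanup: in your gluing step you invoke $[v]_{BMO(B(\overline{x}_i,R_0))}$, but from Theorem~\ref{Thm noncomact} with $t=R_0/2$ you only control $[v]_{BMO(B(\overline{x}_i,R_0/2))}$; the fix is simply to cover $\overline{\Omega'}$ by balls of radius $R_0/4$ (or any radius $<R_0/2$) so that for small $r$ the test ball $B(x,r)$ sits inside the ball on which you have the estimate.
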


We recall that in \cite{BB3} a domain $\Omega$ is called \emph{regular} if it
satisfies the property%
\begin{equation}
\left\vert B\left(  x,r\right)  \cap\Omega\right\vert \geq c\left\vert
B\left(  x,r\right)  \right\vert \label{regular}%
\end{equation}
for any $x\in\Omega,0<r<$diam$\Omega$. It is proved in \cite[Lemma 4.2]{BB3}
that, in particular, a metric ball is regular.

We are also interested in deriving from Theorem \ref{Thm noncomact} a version
of the above estimates involving local BMO norms, which is our main result,
stated in \S \ \ref{sec main result}, and \emph{does not }require assumption
(\ref{regular}). Namely, we now come to the

\bigskip

\begin{proof}
[Proof of Theorem \ref{Thm main}]For fixed $\Omega_{1}\Subset\Omega_{2}%
\Subset\Omega$, let $R_{0}>R_{2}>0$ be two numbers such that for any
$\overline{x}\in\Omega_{1}$ $B\left(  \overline{x},R_{2}\right)  \subset
\Omega_{2}$ and $B\left(  \overline{x},R_{0}\right)  \subset\Omega.$

Pick $t<\min\left(  \frac{R_{0}}{6},\frac{R_{2}}{2}\right)  ,$ then by Theorem
\ref{Thm noncomact} and Proposition \ref{Prop norme locali globali} (a) we
have%
\begin{align}
\left\Vert u\right\Vert _{S^{2,p,\ast}\left(  B\left(  \overline{x},t\right)
\right)  }  &  \leq C\left(  \left\Vert u\right\Vert _{BMO^{p}\left(  B\left(
\overline{x},2t\right)  \right)  }+\left\Vert Lu\right\Vert _{BMO^{p}\left(
B\left(  \overline{x},2t\right)  \right)  }\right) \nonumber\\
&  \leq C\left(  \left\Vert u\right\Vert _{BMO_{loc}^{p}\left(  B\left(
\overline{x},2t,6t\right)  \right)  }+\left\Vert Lu\right\Vert _{BMO_{loc}%
^{p}\left(  B\left(  \overline{x},2t,6t\right)  \right)  }\right) \nonumber\\
&  \leq C\left(  \left\Vert u\right\Vert _{BMO_{loc}^{p}\left(  \Omega
_{2},\Omega\right)  }+\left\Vert Lu\right\Vert _{BMO_{loc}^{p}\left(
\Omega_{2},\Omega\right)  }\right)  . \label{final 1}%
\end{align}
Recall that the constant $C$ in the above estimate depends on the domains but
not on $\overline{x}$. Clearly, the norm $\left\Vert u\right\Vert
_{S^{2,p}\left(  \Omega_{2}\right)  }$ is bounded by a finite sum of $N$ terms
of the kind $\left\Vert u\right\Vert _{S^{2,p}\left(  B\left(  \overline
{x}_{i},t\right)  \right)  },$ and then is bounded by $N$ times the right hand
side of (\ref{final 1}). Next, to bound the terms%
\[
\left[  X_{i}X_{j}u\right]  _{BMO_{loc}\left(  \Omega_{1},\Omega_{2}\right)
}+\left[  X_{j}u\right]  _{BMO_{loc}\left(  \Omega_{1},\Omega_{2}\right)
}+\left[  u\right]  _{BMO_{loc}\left(  \Omega_{1},\Omega_{2}\right)  },
\]
take a ball $B\left(  \overline{x},r\right)  $ centered at some $\overline
{x}\in\Omega_{1}$ and contained in $\Omega_{2}$. If $r\leq t$ (with $t$ as in
(\ref{final 1})) then%
\begin{align*}
&  \frac{1}{\left\vert B_{r}\left(  \overline{x}\right)  \right\vert }%
\int_{B_{r}\left(  \overline{x}\right)  }\left\vert X_{i}X_{j}u\left(
y\right)  -X_{i}X_{j}u_{B_{r}\left(  \overline{x}\right)  }\right\vert
dy\leq\left\Vert u\right\Vert _{S^{2,p,\ast}\left(  B\left(  \overline
{x},t\right)  \right)  }\\
&  \leq C\left(  \left\Vert u\right\Vert _{BMO_{loc}^{p}\left(  \Omega
_{2},\Omega\right)  }+\left\Vert Lu\right\Vert _{BMO_{loc}^{p}\left(
\Omega_{2},\Omega\right)  }\right)  .
\end{align*}
If $r>t$ (but $B\left(  \overline{x},r\right)  \subset\Omega_{2}$) then,
exploiting the local $S^{2,p}$ estimates of \cite{bb1},%
\begin{align*}
&  \frac{1}{\left\vert B_{r}\left(  \overline{x}\right)  \right\vert }%
\int_{B_{r}\left(  \overline{x}\right)  }\left\vert X_{i}X_{j}u\left(
y\right)  -\left(  X_{i}X_{j}u\right)  _{B_{r}\left(  \overline{x}\right)
}\right\vert dy\leq2\frac{1}{\left\vert B_{r}\left(  \overline{x}\right)
\right\vert }\int_{B_{r}\left(  \overline{x}\right)  }\left\vert X_{i}%
X_{j}u\left(  y\right)  \right\vert dy\\
&  \leq2\left(  \frac{1}{\left\vert B_{r}\left(  \overline{x}\right)
\right\vert }\int_{B_{r}\left(  \overline{x}\right)  }\left\vert X_{i}%
X_{j}u\left(  y\right)  \right\vert ^{p}dy\right)  ^{1/p}\leq\frac{c}{t^{Q/p}%
}\left\Vert X_{i}X_{j}u\right\Vert _{L^{p}\left(  \Omega_{2}\right)  }\\
&  \leq\frac{c}{t^{Q/p}}\left(  \left\Vert u\right\Vert _{L^{p}\left(
\Omega\right)  }+\left\Vert Lu\right\Vert _{L^{p}\left(  \Omega\right)
}\right)  \leq\frac{c}{t^{Q/p}}\left(  \left\Vert u\right\Vert _{BMO_{loc}%
^{p}\left(  \Omega_{2},\Omega\right)  }+\left\Vert Lu\right\Vert
_{BMO_{loc}^{p}\left(  \Omega_{2},\Omega\right)  }\right)  .
\end{align*}
In any case
\[
\left[  X_{i}X_{j}u\right]  _{BMO_{loc}\left(  \Omega_{1},\Omega_{2}\right)
}\leq C\left(  \left\Vert u\right\Vert _{BMO_{loc}^{p}\left(  \Omega
_{2},\Omega\right)  }+\left\Vert Lu\right\Vert _{BMO_{loc}^{p}\left(
\Omega_{2},\Omega\right)  }\right)
\]
Analogously we can bound $\left[  X_{j}u\right]  _{BMO_{loc}\left(  \Omega
_{1},\Omega_{2}\right)  }+\left[  u\right]  _{BMO_{loc}\left(  \Omega
_{1},\Omega_{2}\right)  }$. Exploiting again the local $S^{2,p}$ estimates we
conclude%
\[
\left\Vert u\right\Vert _{S_{loc}^{2,p,\ast}\left(  \Omega_{1},\Omega
_{2}\right)  }\leq C\left(  \left\Vert u\right\Vert _{BMO_{loc}^{p}\left(
\Omega_{2},\Omega\right)  }+\left\Vert Lu\right\Vert _{BMO_{loc}^{p}\left(
\Omega_{2},\Omega\right)  }\right)
\]
and we are done.
\end{proof}

\section{Appendix. Uniform bounds on the oscillation of the fundamental
solution\label{sec uniform bound}}

The aim of this section is to prove Theorem \ref{Thm unif BF}, that we have
exploited in \S \ 3 to prove $LMO$ uniform bounds on the coefficients
$c_{ij}^{km}$ which appear in the spherical harmonics expansion of
$\Gamma_{ij}$. Let us recall its statement:

\textbf{Theorem} \textbf{3.4} \textit{For any nonnegative integer }%
$p,$\textit{ there exists a constant }$c_{\Lambda,p}$\textit{ such that for
any }$x_{1},x_{2},y\in R^{N}$\textit{ we have: }%
\begin{equation}
\left\vert X_{i_{1}}X_{i_{2}}...X_{i_{p}}\Gamma\left(  x_{1},y\right)
-X_{i_{1}}X_{i_{2}}...X_{i_{p}}\Gamma\left(  x_{2},y\right)  \right\vert \leq
c_{\Lambda,p}\left\Vert A\left(  x_{1}\right)  -A\left(  x_{2}\right)
\right\Vert \left\Vert y\right\Vert ^{2-Q-p} \label{stime BLU rivisit}%
\end{equation}
\textit{where the differential operators }$X_{i_{j}}$\textit{ act on the }%
$y$\textit{-variable.}

Let us recall again that $\left\Vert y\right\Vert $ stands for the homogeneous
norm in $G$ while $\left\Vert A\left(  x_{1}\right)  -A\left(  x_{2}\right)
\right\Vert $ stands for the usual matrix norm in $\mathbb{R}^{2q}$. Actually,
the proof of this theorem amounts to revising some results and techniques
contained in \cite{BU}, \cite{BLU1} and \cite{BLU2}. Namely, the following
fact is proved in \cite[Corollary 7.13]{BLU2}:

\begin{theorem}
\label{Thm BLU} For any nonnegative integer $p,$ there exists a positive
constant $c_{\Lambda,p}$ such that%
\begin{align}
&  \left\vert X_{i_{1}}X_{i_{2}}...X_{i_{p}}\Gamma\left(  x_{1},y\right)
-X_{i_{1}}X_{i_{2}}...X_{i_{p}}\Gamma\left(  x_{2},y\right)  \right\vert
\label{stima BLU}\\
&  \leq c_{\Lambda,p}\left\Vert A\left(  x_{1}\right)  -A\left(  x_{2}\right)
\right\Vert ^{1/s}\left\Vert y\right\Vert ^{2-Q-p}\nonumber
\end{align}
for every $i_{1},...,i_{p}\in\left\{  1,2,...,q\right\}  ,$ and for every
$y\in\mathbb{R}^{N}\setminus\left\{  0\right\}  ,$ where the differential
operators $X_{i_{j}}$ act on the $y$-variable.
\end{theorem}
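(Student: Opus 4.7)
Since the final statement is precisely Corollary 7.13 of \cite{BLU2}, the most direct plan is to invoke that corollary verbatim. For a self-contained sketch, one follows the parabolic strategy of \cite{BU}, \cite{BLU1}, \cite{BLU2}, outlined below in three stages. Throughout I write $\Gamma^A$ and $L_0^A$ with $A = A(x)$, so the goal is to bound $X^\beta \Gamma^{A_1} - X^\beta \Gamma^{A_2}$ with $A_j = A(x_j)$.

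First, I would introduce the heat operator $\partial_t + L_0^A$ on $G$ and its fundamental solution $h^A(t, u)$. Since $Q \geq 3$, one has the representation $\Gamma^A(y) = \int_0^\infty h^A(t, y)\, dt$, and left invariance of the $X_i$ lets one commute the derivatives through the integral to obtain
\[
X_{i_1} \cdots X_{i_p} \Gamma^A(y) = \int_0^\infty X_{i_1} \cdots X_{i_p} h^A(t, y)\, dt,
\]
reducing the problem to a Hölder-$1/s$ modulus of continuity for the map $A \mapsto X^\beta h^A(t, y)$, with enough Gaussian decay in $t$ to permit integration against $dt$.

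Second, the uniform Gaussian upper bounds $|X^\beta h^A(t, u)| \leq C_\beta\, t^{-(Q + |\beta|)/2} \exp(-c\, d(0, u)^2/t)$, with constants depending only on $\Lambda, G, |\beta|$, are supplied by \cite{BU}, \cite{BLU1}. Continuous dependence on $A$ follows from a Duhamel identity
\[
h^{A_1}(t, u) - h^{A_2}(t, u) = \int_0^t \bigl[ h^{A_1}(t - s, \cdot) \ast (L_0^{A_1} - L_0^{A_2}) h^{A_2}(s, \cdot) \bigr](u)\, ds,
\]
whose integrand extracts $\|A_1 - A_2\|$ times two $X$-derivatives of $h^{A_2}$. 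Inserting the Gaussian bounds, the factor $X^2 h^{A_2}$ contributes a singular $s^{-1}$ time weight; to ensure convergence the $s$-integral is split between a small-time region, where one uses the trivial bound $|h^{A_1} - h^{A_2}| \leq |h^{A_1}| + |h^{A_2}|$ with its Gaussian envelope, and a large-time region, where one uses the Duhamel estimate. Optimizing the threshold in $\|A_1 - A_2\|$ produces the Hölder exponent $1/s$, whose precise value is dictated by the dilation weights $\omega_i \leq s$ of the step-$s$ group; the same argument applied to $X^\beta h^A$ in place of $h^A$ gives the analogous estimate with an extra time weight $t^{-|\beta|/2}$ in the Gaussian envelope.

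Third, substituting the resulting Gaussian comparison of $X^\beta h^{A_1}$ and $X^\beta h^{A_2}$ into the time integral for $X^\beta \Gamma^{A_1} - X^\beta \Gamma^{A_2}$, and rescaling $t = d(0, y)^2 \tau$, one obtains an elementary convergent $\tau$-integral multiplied by the prefactor $d(0, y)^{2 - Q - p}$, which is equivalent to $\|y\|^{2 - Q - p}$ by Proposition \ref{Prop homogeneous distance}. The main obstacle is the second step: producing the Hölder-$1/s$ dependence of $X^\beta h^A$ on $A$, uniformly in $A$ (depending only on $\Lambda$), which is the core technical contribution of \cite{BLU2}; a naive Duhamel argument would yield only a logarithmic modulus because of the singular time weight just noted, so the interpolation between the trivial and the Duhamel bounds is essential, and it is exactly this interpolation that fixes the exponent at $1/s$ rather than $1$.
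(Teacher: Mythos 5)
Your primary suggestion---invoke Corollary 7.13 of \cite{BLU2} verbatim---is exactly what the paper does: Theorem \ref{Thm BLU} is cited, not proved, and the Appendix only recalls the four-step structure of the literature's argument so as to then sharpen the exponent from $1/s$ to $1$ in Proposition \ref{Prop Step 1b}.

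Your ``self-contained sketch,'' however, does not track the actual mechanism in \cite{BU}, \cite{BLU1}, \cite{BLU2}, and the discrepancy is not cosmetic. Stages 1 and 3 (the representation $\Gamma^A(y)=\int_0^\infty h^A(t,y)\,dt$ and the final $t$-rescaling) agree, but Stage 2 does not: there is no Duhamel parametrix comparison in the cited works. The dependence of $h^A$ on $A$ is controlled there via a dilation-commuting group automorphism $T_A$ of $G$ for which $h_A(x,t)=J_A\,h_G(T_A(x),t)$, with $h_G$ the heat kernel of $\partial_t-\sum X_i^2$; the map $A\mapsto T_A(x)$ is \emph{Euclidean}-Lipschitz, $|T_{A_1}(x)-T_{A_2}(x)|\le c_\Lambda\|A_1-A_2\|$, and the exponent $1/s$ appears precisely when converting this Euclidean increment into a homogeneous increment through $\|z\|\le c|z|^{1/s}$ before a mean-value bound (Lagrange theorem for vector fields) on $h_G$. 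Your claim that a small-time/large-time split in the Duhamel $s$-integral, optimized in $\|A_1-A_2\|$, ``fixes the exponent at $1/s$'' is unsupported: such an interpolation between a logarithmically divergent Duhamel estimate and a trivial Gaussian bound can yield any exponent strictly less than $1$, and nothing in that balancing singles out the group step $s$. The dilation weights $\omega_i$ never enter a convolution-in-time argument in the way you suggest; they enter only through the homogeneous-norm inequality, which your sketch never invokes. If you want a route to the theorem that is genuinely independent of $T_A$, the Duhamel identity is a reasonable starting point, but you would need to exploit the cancellation $\int X_hX_k h^{A_2}(s,\cdot)=0$ to tame the $s^{-1}$ singularity, and that argument, done carefully, does not naturally stop at $1/s$.
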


Recall that $s$ is the maximum lenght of commutators required to span
$\mathbb{R}^{N}$.

Comparing (\ref{stima BLU}) with (\ref{stime BLU rivisit}), one sees that what
we need is to replace the exponent $1/s$ of the matrix norm on the right hand
side of (\ref{stima BLU}) with the exponent $1$.

In order to prove Theorem \ref{Thm unif BF}, we will now revise the proof of
Theorem \ref{Thm BLU} given in \cite{BLU2}, which proceeds in four steps:

Step 1. The Authors assume $G$ to a be a free Carnot group. Under this
assumption, they consider the \textit{evolution operator}%
\[
H_{A}=\partial_{t}-\sum a_{ij}X_{i}X_{j}%
\]
where $A=\left\{  a_{ij}\right\}  $ is a symmetric positive, constant, matrix,
in a fixed \textquotedblleft ellipticity class\textquotedblright%
\ $\mathcal{M}_{\Lambda}$:%
\[
\Lambda|\xi|^{2}\leq\sum_{i,j=1}^{q}a_{ij}\xi_{i}\xi_{j}\leq\Lambda^{-1}%
|\xi|^{2}\text{ }\forall\xi\in{\mathbb{R}}^{q}.
\]
For this operator $H_{A}$ they consider the fundamental solution
(\textquotedblleft heat kernel\textquotedblright) $h_{A}\left(  x,t\right)  $
and prove the following estimate (see \cite[Theorem 7.5]{BLU1}):%
\begin{equation}
\left\vert h_{A_{1}}\left(  x,t\right)  -h_{A_{2}}\left(  x,t\right)
\right\vert \leq c_{\lambda}\left\Vert A_{1}-A_{2}\right\Vert ^{1/s}%
t^{-Q/2}\exp\left(  -\frac{\left\Vert x\right\Vert ^{2}}{c_{\Lambda}t}\right)
\label{Step 1 BLU}%
\end{equation}
for $x\in\mathbb{R}^{N},t>0,$ any $A_{1},A_{2}\in\mathcal{M}_{\Lambda}$.

Step 2. Under the same assumptions, the Authors extend the previous bound to
the derivatives of $h_{A},$ proving (see \cite[Theorem 7.7]{BLU1}): for any
nonnegative integers $p,m,$ there exist positive constants $c_{\lambda}$ and
$c_{\lambda,p,m}$ such that%
\begin{align}
&  \left\vert X_{i_{1}}...X_{i_{p}}\left(  \partial_{t}\right)  ^{m}h_{A_{1}%
}\left(  x,t\right)  -X_{i_{1}}...X_{i_{p}}\left(  \partial_{t}\right)
^{m}h_{A_{2}}\left(  x,t\right)  \right\vert \leq\label{BLU bound derivatives}%
\\
&  \leq c_{\Lambda,p,m}\left\Vert A_{1}-A_{2}\right\Vert ^{1/s}t^{-\left(
Q+p+2m\right)  /2}\exp\left(  -\frac{\left\Vert x\right\Vert ^{2}}{c_{\Lambda
}t}\right) \nonumber
\end{align}
for $x\in\mathbb{R}^{N},t>0,$ any $i_{1},i_{2},...,i_{p}\in\left\{
1,2,...,q\right\}  ,$ and any $A_{1},A_{2}\in\mathcal{M}_{\Lambda}.$

Step 3. The assumption that $G$ is free in now removed, by a suitable
\textquotedblleft lifting result\textquotedblright\ proved in \cite{BU} (see
also \cite[Theorem 8.3]{BLU1}), so that (\ref{BLU bound derivatives}) is
established for \textit{any} Carnot group.

Step 4. The Authors prove (see \cite[Theorem 3.9]{BLU1}) that the function%
\[
\Gamma_{A}\left(  x\right)  =\int_{0}^{+\infty}h_{A}\left(  x,t\right)  dt
\]
is the fundamental solution to the stationary operator%
\[
L_{A}=-\sum a_{ij}X_{i}X_{j}.
\]
Then, integrating in the $t$ variable the estimate
(\ref{BLU bound derivatives}) they get%
\[
\left\vert X_{i_{1}}...X_{i_{p}}\Gamma_{A_{1}}\left(  x,t\right)  -X_{i_{1}%
}...X_{i_{p}}\Gamma_{A_{2}}\left(  x,t\right)  \right\vert \leq c_{\Lambda
,p,q}\left\Vert A_{1}-A_{2}\right\Vert ^{1/s}\left\Vert y\right\Vert ^{2-Q-p}%
\]
which is essentially (\ref{stima BLU}).

We are going to show that the following refinement of (\ref{Step 1 BLU}) can
be proved:

\begin{proposition}
\label{Prop Step 1b}Under the same assumptions and with the same notation of
Step 1, we have%
\[
\left\vert h_{A_{1}}\left(  x,t\right)  -h_{A_{2}}\left(  x,t\right)
\right\vert \leq c_{\Lambda}\left\Vert A_{1}-A_{2}\right\Vert \cdot
t^{-Q/2}\exp\left(  -\frac{\left\Vert x\right\Vert ^{2}}{c_{\Lambda}t}\right)
\]
for any $x\in G,t>0.$
\end{proposition}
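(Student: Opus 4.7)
\textbf{Proof proposal for Proposition \ref{Prop Step 1b}.} The plan is to recover the extra factor $\|A_1-A_2\|^{1-1/s}$ lost in the BLU estimate \eqref{Step 1 BLU} by using a Duhamel-type (variation of parameters) representation for $h_{A_1}-h_{A_2}$, rather than an interpolation/H\"{o}lder-continuity argument in $A$. Since both $L_{A_i}=-\sum a^{(i)}_{hk}X_h X_k$ are left-invariant sums of squares with constant coefficients on the Carnot group $G$, the associated heat semigroups commute with left convolution and share the Gaussian upper bounds and derivative bounds already established in \cite{BLU1}. Setting $v(s):=e^{-(t-s)L_{A_2}}e^{-sL_{A_1}}\delta_0$, one has $v(0)=h_{A_2}(\cdot,t)$ and $v(t)=h_{A_1}(\cdot,t)$, so
\[
h_{A_1}(x,t)-h_{A_2}(x,t)=\int_0^t e^{-(t-s)L_{A_2}}\bigl[(L_{A_2}-L_{A_1})h_{A_1}(\cdot,s)\bigr](x)\,ds.
\]
Writing this out as a space-time convolution and using $(L_{A_2}-L_{A_1})=\sum(a^{(2)}_{hk}-a^{(1)}_{hk})X_hX_k$ immediately produces the desired linear factor $\|A_1-A_2\|$, in place of the fractional power $\|A_1-A_2\|^{1/s}$.

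The obstacle is that the naive bound
$|(L_{A_2}-L_{A_1})h_{A_1}(\cdot,s)|\le c\|A_1-A_2\|\,s^{-(Q+2)/2}\exp(-\|\cdot\|^2/(c s))$
is not integrable in $s$ near $0$ against the Gaussian bound on $h_{A_2}(\cdot,t-s)$: after Chapman--Kolmogorov--type convolution of the two Gaussian factors (which, by the standard semigroup-composition trick for Gaussian upper bounds, contributes a factor $t^{-Q/2}\exp(-\|x\|^2/(ct))$), what remains is $\int_0^t s^{-1}\,ds$, logarithmically divergent. To remedy this, I would split the time integral at $s=t/2$. On $[t/2,t]$, $s$ is bounded away from $0$, and the derivative bound for $X_hX_k h_{A_1}(\cdot,s)$ together with the Gaussian bound for $h_{A_2}(\cdot,t-s)$ yields, after the Gaussian convolution step, an integrand $\le c\,s^{-1}t^{-Q/2}\exp(-\|x\|^2/(ct))$, whose integral from $t/2$ to $t$ is just $c\log 2\cdot t^{-Q/2}\exp(-\|x\|^2/(ct))$.

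On $[0,t/2]$ one proceeds symmetrically, by first integrating by parts to shift the operator $X_h X_k$ off of $h_{A_1}(\cdot,s)$ and onto $h_{A_2}(\cdot,t-s)$, so that the singular factor becomes $(t-s)^{-(Q+2)/2}\le (t/2)^{-(Q+2)/2}$, which is bounded on this interval. Because the $X_h$ are left-invariant and self-adjoint up to sign (Haar measure on $G$ is Lebesgue and $X_h$ is divergence-free), integration by parts in $y$ against $h_{A_2}(y^{-1}\!\circ\! x,t-s)$ converts $X_h X_k$ acting in the $y$-variable into (essentially) the corresponding pair of right-invariant fields acting on $h_{A_2}$; since the Gaussian-type derivative bounds of \cite{BLU1} hold for \emph{both} left- and right-invariant horizontal derivatives of $h_{A_2}$, the same Chapman--Kolmogorov convolution estimate applies and yields an integrand $\le c\,(t-s)^{-1}t^{-Q/2}\exp(-\|x\|^2/(ct))$, again integrable on $[0,t/2]$.

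Summing the two pieces produces the desired bound with the linear exponent on $\|A_1-A_2\|$. The main technical point (and the step I would check most carefully) is the integration-by-parts identity that converts the horizontal $y$-derivatives of $h_{A_2}(y^{-1}\!\circ\! x,t-s)$ into expressions to which the Gaussian derivative bounds of \cite{BLU1} apply, cleanly and uniformly in $A_2\in\mathcal{M}_\Lambda$; once this is in hand the remaining estimates are routine semigroup-type Gaussian convolutions and elementary one-dimensional integrations.
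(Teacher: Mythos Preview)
Your Duhamel/semigroup argument is correct and genuinely different from the route taken in the paper. The paper does \emph{not} write a variation-of-parameters formula; instead it exploits the explicit representation $h_A(y,t)=J_A\,h_G(T_A(y),t)$ via the Lie group automorphism $T_A$ from \cite{BU}, reduces by homogeneity to the sphere $\|x\|=1$, and then applies the \emph{Euclidean} mean value theorem to $y\mapsto h_G(y,t)$ between $T_{A_1}(x)$ and $T_{A_2}(x)$. A short case analysis (on whether $|T_{A_1}(x)-T_{A_2}(x)|$ is small) combined with the already-proved fractional estimate \eqref{Step 1 BLU} then yields the linear exponent.

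The trade-offs are as follows. Your argument is structurally cleaner and more robust: it never invokes the $T_A$-change-of-variables machinery, it does not use the weaker $1/s$-exponent bound as a crutch, and it would transplant to other settings where one has uniform Gaussian bounds and derivative bounds but no explicit automorphism $T_A$. The price is that it leans on two ingredients that require care: the reproduction property for Gaussian upper bounds on $G$ (which is standard), and the integration-by-parts step, which converts $X_h^{(y)}$ acting on $h_{A_2}(y^{-1}\circ x,\cdot)$ into the right-invariant derivative $-(X_h^R h_{A_2})(y^{-1}\circ x,\cdot)$. The latter is fine because $h_{A_2}(\cdot,t)$ is invariant under group inversion (via $h_A=J_A\,h_G\circ T_A$, the fact that $T_A$ is an automorphism, and the symmetry of $h_G$), so $|X_k^R X_h^R h_{A_2}(z,t)|=|X_kX_h h_{A_2}(z^{-1},t)|$ and the left-invariant Gaussian derivative bounds of \cite{BLU1} transfer with the same constants. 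The paper's approach, by contrast, is shorter once the $T_A$-formalism is in hand and uses only the single-variable mean value theorem, but it is tied to the specific structure of free Carnot groups and recycles the previously proved fractional bound in Case~2.
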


(The improvement consists in the exponent $1$ instead of $1/s,$ for
$\left\Vert A_{1}-A_{2}\right\Vert $). Before proving this proposition, we
collect in the following theorem a number of results proved in \cite{BU}, that
we will need.

\begin{theorem}
Let $G$ be a free homogeneous Carnot group and let $A\in\mathcal{M}_{\Lambda}%
$. Then there exists a Lie group automorphism $T_{A}$ of $G$, commuting with
the dilations of $G$, such that%
\begin{equation}
h_{A}\left(  x,t;\xi,\tau\right)  =J_{A}\left(  x\right)  \cdot h_{G}\left(
T_{A_{1}}\left(  x\right)  ,t;T_{A_{1}}\left(  \xi\right)  ,\tau\right)
\label{h_A}%
\end{equation}
where $h_{G}$ is the heat kernel for $\partial_{t}-\sum X_{i}^{2}$ on $G$ ,
which is actually a convolution kernel:%
\begin{equation}
h_{G}\left(  T_{A_{1}}\left(  x\right)  ,t;T_{A_{1}}\left(  \xi\right)
,\tau\right)  =h_{G}\left(  T_{A_{1}}\left(  \xi\right)  ^{-1}\circ T_{A_{1}%
}\left(  x\right)  ,t-\tau\right)  \label{h_G}%
\end{equation}
and has the following homogeneity%
\begin{equation}
h_{G}\left(  D\left(  \lambda\right)  x,\lambda^{2}t\right)  =t^{-Q}%
h_{G}\left(  x,t\right)  \text{ for any }\lambda>0, \label{h_G homog}%
\end{equation}
while $J_{A}\left(  x\right)  =\left\vert \det\mathcal{J}_{T_{A}}\left(
x\right)  \right\vert $, where $\mathcal{J}_{T_{A}}$ is the Jacobian of
$T_{A}$. Moreover $J_{A}\left(  x\right)  $ turns out to be constant in $x$,
and%
\begin{align}
\left(  c_{\Lambda}\right)  ^{-1}  &  \leq J_{A}\leq c_{\Lambda}%
\label{BU 19}\\
\left\vert J_{A_{1}}-J_{A_{2}}\right\vert  &  \leq c_{\Lambda}\left\Vert
A_{1}-A_{2}\right\Vert \label{BU 20}\\
\left(  c_{\Lambda}\right)  ^{-1}\left\Vert x\right\Vert  &  \leq\left\Vert
T_{A}\left(  x\right)  \right\Vert \leq c_{\Lambda}\left\Vert x\right\Vert
\label{BU 21}\\
\left\vert T_{A_{1}}\left(  x\right)  -T_{A_{2}}\left(  x\right)  \right\vert
&  \leq c_{\Lambda}\left\Vert A_{1}-A_{2}\right\Vert , \label{BU last}%
\end{align}
for any $x\in G,$ any $A,A_{1},A_{2}\in\mathcal{M}_{\Lambda}.$
\end{theorem}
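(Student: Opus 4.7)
The strategy is to exploit the representation formula (\ref{h_A}) to reduce the task to sharp Lipschitz-in-$A$ bounds on the determinant $J_A$ and on the automorphism $T_A$, combined with the smoothness of the single fixed heat kernel $h_G$. The point is that (\ref{BU 20}) and (\ref{BU last}) are genuine Lipschitz estimates in $\|A_1-A_2\|$ (exponent $1$), not H\"older estimates with exponent $1/s$, so the final bound inherits the full exponent $1$. This is the improvement over the bound (\ref{Step 1 BLU}) used in \cite{BLU1}.

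First, I would reduce to $t=1$. Since $h_{A_i}$ is the heat kernel of a $2$-homogeneous operator, it satisfies the same scaling as $h_G$ in (\ref{h_G homog}); writing $y=D(1/\sqrt t)x$ and using $\|D(1/\sqrt t)x\|=\|x\|/\sqrt t$, it therefore suffices to prove
\[
|h_{A_1}(y,1)-h_{A_2}(y,1)|\le c_\Lambda\|A_1-A_2\|\exp\bigl(-\|y\|^2/c_\Lambda\bigr).
\]
Taking $\xi=0$, $\tau=0$ in (\ref{h_A})--(\ref{h_G}), I would then split
\[
h_{A_1}(y,1)-h_{A_2}(y,1)=J_{A_1}\bigl[h_G(T_{A_1}(y),1)-h_G(T_{A_2}(y),1)\bigr]+(J_{A_1}-J_{A_2})\,h_G(T_{A_2}(y),1).
\]

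For the second summand, (\ref{BU 20}) yields $|J_{A_1}-J_{A_2}|\le c_\Lambda\|A_1-A_2\|$, and the standard Gaussian upper bound for $h_G(\cdot,1)$ together with (\ref{BU 21}) gives $|h_G(T_{A_2}(y),1)|\le c_\Lambda\exp(-\|y\|^2/c_\Lambda)$, closing this piece. For the first summand, (\ref{BU 19}) absorbs $J_{A_1}$, and the Euclidean Mean Value Theorem gives
\[
h_G(T_{A_1}(y),1)-h_G(T_{A_2}(y),1)=\int_0^1\nabla_zh_G(\gamma_s,1)\cdot\bigl(T_{A_1}(y)-T_{A_2}(y)\bigr)\,ds,
\]
where $\gamma_s=(1-s)T_{A_2}(y)+sT_{A_1}(y)$. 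Then (\ref{BU last}) controls the increment by $c_\Lambda\|A_1-A_2\|$; since $h_G(\cdot,1)$ is a Schwartz function on $\mathbb{R}^N$ (smoothness plus Gaussian decay of all Euclidean derivatives at time $1$, by Folland's estimates), $|\nabla_zh_G(\gamma_s,1)|\le c\exp(-\|\gamma_s\|^2/c)$, and multiplying the two bounds gives the desired Lipschitz-in-$A$ factor.

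The main obstacle is transferring the Gaussian decay from the Euclidean interpolating point $\gamma_s$ back to $\|y\|$. For $\|y\|$ bounded this is immediate, so assume $\|y\|$ large. By (\ref{BU 21}), $\|T_{A_i}(y)\|\ge c_\Lambda^{-1}\|y\|$ for $i=1,2$, while (\ref{BU last}) forces $T_{A_1}(y)$ and $T_{A_2}(y)$ to lie at Euclidean distance at most $c_\Lambda\|A_1-A_2\|$, hence uniformly bounded over the ellipticity class. Since on a large homogeneous ball the Euclidean norm dominates a positive power of the homogeneous norm (recall $|y|\ge c^{-1}\|y\|^s$ for $\|y\|\ge 1$), a bounded Euclidean perturbation of $T_{A_1}(y)$ cannot reduce the homogeneous norm below a fixed multiple of $\|y\|$ once $\|y\|$ is large enough. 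Hence $\|\gamma_s\|\ge c^{-1}\|y\|$, and after a routine adjustment of the constant in the exponent one obtains $\exp(-\|\gamma_s\|^2/c)\le c\exp(-\|y\|^2/c')$, completing the estimate at $t=1$ and, by the scaling step, at arbitrary $t>0$.
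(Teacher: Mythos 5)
The statement you were asked about is the theorem that asserts the existence of the automorphism $T_{A}$ and the factorization formula (\ref{h_A}) together with the estimates (\ref{BU 19})--(\ref{BU last}). In the paper this is a \emph{citation} theorem: it is not proved, but simply collected from Bonfiglioli--Uguzzoni, with the authors pointing to formulas $(1.5)$, $(2.19)$--$(2.21)$ and the last line of \cite{BU} and to \cite{BLU1} for (\ref{h_G})--(\ref{h_G homog}). Your proposal does not prove any of this; instead, it \emph{assumes} (\ref{h_A}), (\ref{BU 19}), (\ref{BU 20}), (\ref{BU 21}) and (\ref{BU last}) as given and uses them to derive a Lipschitz-in-$A$ bound for $h_{A_1}-h_{A_2}$. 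That is the content of the subsequent Proposition~\ref{Prop Step 1b}, not of the theorem in question. So you have written a (would-be) proof of the wrong result, taking for granted precisely the result you were asked to address.

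Even read as an attempt at Proposition~\ref{Prop Step 1b}, the proposal contains a genuine error in the ``main obstacle'' paragraph. You invoke the inequality $|y|\ge c^{-1}\|y\|^{s}$ for $\|y\|\ge1$, but this is backwards: the relation $\|y\|\le c|y|^{1/s}$ (equivalently $|y|\ge c^{-s}\|y\|^{s}$) holds only for $\|y\|\le1$, while for $\|y\|\ge1$ one has instead $\|y\|\le|y|\le c\|y\|^{s}$. With the correct inequalities, the chain of bounds you sketch only yields $\|\gamma_{s}\|\gtrsim\|y\|^{1/s}$, so $\exp(-\|\gamma_{s}\|^{2}/c)\lesssim\exp(-\|y\|^{2/s}/c)$, which is strictly weaker than the Gaussian $\exp(-\|y\|^{2}/c')$ you need. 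The underlying geometric fact you want --- that a bounded Euclidean perturbation $w$ of a point $z$ with $\|z\|$ large cannot decrease its homogeneous norm below a fixed multiple of $\|z\|$ --- is actually true, but the right proof goes through dilations rather than norm comparisons: since $|D(1/\lambda)w|\le C/\lambda$ for $\lambda\ge1$ and $|w|\le C$, while $|D(2/\|z\|)z|\ge2$, one gets $|D(2/\|z\|)(z+w)|\ge1$, i.e.\ $\|z+w\|\ge\|z\|/2$, once $\|z\|\ge 2C$. The paper's own proof of Proposition~\ref{Prop Step 1b} sidesteps all of this by normalizing $\|x\|=1$ rather than $t=1$ and splitting on whether the increment $|T_{A_1}(x)-T_{A_2}(x)|$ is small (where the Lagrange argument is clean) or bounded below (where the earlier $1/s$-H\"older bound (\ref{Step 1 BLU}) already gives a Lipschitz estimate because $\|A_1-A_2\|$ is then bounded below), and only afterwards uses homogeneity to pass to general $x$.
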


Relations (\ref{h_A}), (\ref{BU 19}), (\ref{BU 20}), (\ref{BU 21}) are (1.5),
(2.19), (2.20), (2.21) in \cite{BU}, respectively (note that our symbol $h$
corresponds to $\Gamma$ in \cite{BU});\ for (\ref{BU last}), see the last line
in \cite{BU}; (\ref{h_G}) and (\ref{h_G homog}) are known properties of the
heat kernel on Carnot groups, see also \cite{BLU1}.

\bigskip

\begin{proof}
[Proof of Proposition \ref{Prop Step 1b}]Here we somewhat revise the proof of
\cite[Theorem 7.5]{BLU1}.%
\begin{align}
&  \left\vert h_{A_{1}}\left(  x,t\right)  -h_{A_{2}}\left(  x,t\right)
\right\vert =\left\vert J_{A_{1}}h_{G}\left(  T_{A_{1}}\left(  x\right)
,t\right)  -J_{A_{2}}h_{G}\left(  T_{A_{2}}\left(  x\right)  ,t\right)
\right\vert \leq\nonumber\\
&  \leq\left\vert J_{A_{1}}-J_{A_{2}}\right\vert \left\vert h_{G}\left(
T_{A_{1}}\left(  x\right)  ,t\right)  \right\vert +J_{A_{2}}\left\vert
h_{G}\left(  T_{A_{1}}\left(  x\right)  ,t\right)  -h_{G}\left(  T_{A_{2}%
}\left(  x\right)  ,t\right)  \right\vert \equiv I+II. \label{bound I+II}%
\end{align}

By (\ref{BU 20}), the Gaussian estimate for\ $h_{G}$ and (\ref{BU 21}):%
\begin{equation}
I\leq c_{\Lambda}\left\Vert A_{1}-A_{2}\right\Vert t^{-Q/2}\exp\left(
-\frac{\left\Vert T_{A_{1}}\left(  x\right)  \right\Vert }{ct}\right)  \leq
c_{\Lambda}\left\Vert A_{1}-A_{2}\right\Vert t^{-Q/2}\exp\left(
-\frac{\left\Vert x\right\Vert }{ct}\right)  , \label{bound I}%
\end{equation}
while by (\ref{BU 19})%
\begin{equation}
II\leq c_{\Lambda}\left\vert h_{G}\left(  T_{A_{1}}\left(  x\right)
,t\right)  -h_{G}\left(  T_{A_{2}}\left(  x\right)  ,t\right)  \right\vert .
\label{bound II}%
\end{equation}
We now need the following

\begin{claim}
There exists $k>0$ such that if $\left\Vert x\right\Vert =1,$ then%
\begin{equation}
\left\vert h_{G}\left(  T_{A_{1}}\left(  x\right)  ,t\right)  -h_{G}\left(
T_{A_{2}}\left(  x\right)  ,t\right)  \right\vert \leq c_{\Lambda}\left\Vert
A_{1}-A_{2}\right\Vert t^{-Q/2-k}\exp\left(  -\frac{1}{ct}\right)  .
\label{claim}%
\end{equation}

\end{claim}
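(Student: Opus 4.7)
The plan is to prove the claim by a Euclidean mean value argument combined with a Gaussian bound on the Euclidean gradient of $h_G$, together with a dichotomy on the size of $\|A_1-A_2\|$ needed to retain the exponential factor $\exp(-1/(ct))$. The improvement over the exponent $1/s$ in the analogous bound of \cite{BLU2} comes from exploiting the Euclidean estimate (\ref{BU last}) directly via a Euclidean (not group) mean value theorem, avoiding the lossy inequality $\|y\|\leq c|y|^{1/s}$.

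\textbf{Step 1 (mean value inequality).} First I will apply (\ref{BU last}) to get $|T_{A_1}(x)-T_{A_2}(x)|\leq c_\Lambda\|A_1-A_2\|$, and parametrize the Euclidean segment $v_\theta=(1-\theta)T_{A_2}(x)+\theta T_{A_1}(x)$, $\theta\in[0,1]$. The fundamental theorem of calculus then yields
\begin{equation*}
|h_G(T_{A_1}(x),t)-h_G(T_{A_2}(x),t)|\leq c_\Lambda\|A_1-A_2\|\sup_{\theta\in[0,1]}|\nabla_E h_G(v_\theta,t)|,
\end{equation*}
where $\nabla_E$ denotes the Euclidean gradient. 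For this gradient I will invoke the standard estimate
\begin{equation*}
|\nabla_E h_G(v,t)|\leq C\,t^{-(Q+s)/2}\exp\bigl(-\|v\|^2/(ct)\bigr),\quad v\in G,\ t>0,
\end{equation*}
which follows from the classical Gaussian bounds on horizontal derivatives $X_{i_1}\cdots X_{i_p}h_G$ together with the expression of each $\partial_{y_j}$ as a polynomial combination of the adapted basis $X_1,\ldots,X_N$.

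\textbf{Step 2 (dichotomy).} The main difficulty is bounding $\|v_\theta\|$ from below along the Euclidean segment, since that segment can approach the origin when $\|A_1-A_2\|$ is not small. I will therefore split into two regimes governed by a threshold $\delta_0=\delta_0(\Lambda)>0$. When $\|A_1-A_2\|\leq\delta_0$, (\ref{BU 21}) gives $\|T_{A_i}(x)\|\geq c_\Lambda^{-1}$, so $T_{A_i}(x)$ lies in a compact subset of $\mathbb{R}^N\setminus\{0\}$; for $\delta_0$ small enough, the estimate $|v_\theta-T_{A_2}(x)|\leq c_\Lambda\delta_0$ forces $|v_\theta|$, and hence (by the equivalence of Euclidean and homogeneous norms away from the origin) $\|v_\theta\|$, to stay bounded below by $c_0=c_0(\Lambda)>0$. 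The mean value inequality then yields the claim with $k=s/2$. When $\|A_1-A_2\|>\delta_0$, I drop the MVT and use the Gaussian bound on $h_G$ itself: $|h_G(T_{A_i}(x),t)|\leq C\,t^{-Q/2}\exp(-\|T_{A_i}(x)\|^2/(ct))\leq C\,t^{-Q/2}\exp(-1/(c't))$ by (\ref{BU 21}); the triangle inequality together with the trivial estimate $1\leq\delta_0^{-1}\|A_1-A_2\|$ then gives the desired bound in a stronger form.

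\textbf{Main obstacle and conclusion.} The chief obstacle is preserving the Gaussian exponent $\exp(-1/(ct))$ along the Euclidean segment, which forces the dichotomy above; a secondary subtlety is the Euclidean gradient estimate itself, which requires absorbing polynomial-in-$v$ factors into the Gaussian tail at large $v$, but this is routine. Once the claim is established, it feeds into (\ref{bound II}) and, combined with (\ref{bound I}), completes the proof of Proposition \ref{Prop Step 1b} with the sharp linear exponent on $\|A_1-A_2\|$; Steps 2--4 of the BLU outline then carry through verbatim to yield Theorem \ref{Thm unif BF}.
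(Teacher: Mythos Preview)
Your proposal is correct and follows essentially the same route as the paper's own proof: a Euclidean (not sub-Riemannian) mean value inequality, a dichotomy on the size of the perturbation, the lower bound $\|v_\theta\|\geq c_0$ along the segment in the small-perturbation case via (\ref{BU 21}), and the Euclidean gradient bound for $h_G$ deduced from the Gaussian estimates on $X$-derivatives. The only difference worth noting is in the large-perturbation case: the paper invokes the already-known estimate (\ref{Step 1 BLU}) with exponent $1/s$ and then upgrades it to the linear exponent using $|T_{A_1}(x)-T_{A_2}(x)|>\delta_0$, (\ref{BU last}), and the uniform boundedness of $\|A_1-A_2\|$ on $\mathcal{M}_\Lambda$, whereas you bypass this and simply use the Gaussian upper bound on each $h_G(T_{A_i}(x),t)$ together with the triangle inequality and $1\leq\delta_0^{-1}\|A_1-A_2\|$; your argument here is a little cleaner and avoids the appeal to the $1/s$ bound.
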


Let us first show how we can conclude the proof using the claim, then we will
prove (\ref{claim}).

For any $x\in G,$ let $x=D_{\left\Vert x\right\Vert }\left(  x^{\prime
}\right)  $ with $\left\Vert x^{\prime}\right\Vert =1,$ and let us apply
(\ref{claim}), keeping in mind (\ref{h_G homog}) and the fact that $T_{A}$
commutes with dilations:%
\[
T_{A_{i}}\left(  x\right)  =T_{A_{i}}\left(  D_{\left\Vert x\right\Vert
}\left(  x^{\prime}\right)  \right)  =D_{\left\Vert x\right\Vert }\left(
T_{A_{i}}\left(  x^{\prime}\right)  \right)  \text{ for }i=1,2,\text{ hence}%
\]%
\begin{align*}
&  \left\vert h_{G}\left(  T_{A_{1}}\left(  x\right)  ,t\right)  -h_{G}\left(
T_{A_{2}}\left(  x\right)  ,t\right)  \right\vert =\\
&  =\left\vert h_{G}\left(  D_{\left\Vert x\right\Vert }\left(  T_{A_{1}%
}\left(  x^{\prime}\right)  \right)  ,\left\Vert x\right\Vert ^{2}\frac
{t}{\left\Vert x\right\Vert ^{2}}\right)  -h_{G}\left(  D_{\left\Vert
x\right\Vert }\left(  T_{A_{2}}\left(  x^{\prime}\right)  \right)  ,\left\Vert
x\right\Vert ^{2}\frac{t}{\left\Vert x\right\Vert ^{2}}\right)  \right\vert
=\\
&  =\left\Vert x\right\Vert ^{-Q}\left\vert h_{G}\left(  T_{A_{1}}\left(
x^{\prime}\right)  ,\frac{t}{\left\Vert x\right\Vert ^{2}}\right)
-h_{G}\left(  T_{A_{2}}\left(  x^{\prime}\right)  ,\frac{t}{\left\Vert
x\right\Vert ^{2}}\right)  \right\vert \leq\\
&  \leq\left\Vert x\right\Vert ^{-Q}c_{\Lambda}\left\Vert A_{1}-A_{2}%
\right\Vert \left(  \frac{t}{\left\Vert x\right\Vert ^{2}}\right)
^{-Q/2-k}\exp\left(  -\frac{\left\Vert x\right\Vert ^{2}}{ct}\right)  =\\
&  =c_{\Lambda}\left\Vert A_{1}-A_{2}\right\Vert t^{-Q/2}\left(
\frac{\left\Vert x\right\Vert ^{2}}{t}\right)  ^{k}\exp\left(  -\frac
{\left\Vert x\right\Vert ^{2}}{ct}\right)  \leq\\
&  \leq c_{\Lambda}\left\Vert A_{1}-A_{2}\right\Vert t^{-Q/2}\exp\left(
-\frac{\left\Vert x\right\Vert ^{2}}{ct}\right)  ,
\end{align*}
possibly changing the constant $c$ inside the $\exp$. By (\ref{bound I+II}),
(\ref{bound I}), (\ref{bound II}) this implies the result.

Let us now prove the Claim. We will bound the left hand side of (\ref{claim})
applying Lagrange theorem (in the standard form, instead of the Lagrange
theorem for vector fields which is applied in the proof of \cite[Theorem
7.5]{BLU1}).%
\begin{equation}
\left\vert h_{G}\left(  T_{A_{1}}\left(  x\right)  ,t\right)  -h_{G}\left(
T_{A_{2}}\left(  x\right)  ,t\right)  \right\vert \leq\left\vert T_{A_{1}%
}\left(  x\right)  -T_{A_{2}}\left(  x\right)  \right\vert \sup_{y\in\left[
T_{A_{1}}\left(  x\right)  ,T_{A_{2}}\left(  x\right)  \right]  }\left\vert
\nabla_{y}h_{G}\left(  y,t\right)  \right\vert . \label{Lagrange}%
\end{equation}
Recalling (\ref{BU 21}), we now note that for some constants $\delta_{0}%
,c_{0}\in\left(  0,1\right)  $ we can say that%
\[
\text{if }\left\Vert x\right\Vert =1,y\in\left[  T_{A_{1}}\left(  x\right)
,T_{A_{2}}\left(  x\right)  \right]  \text{ and }\left\vert T_{A_{1}}\left(
x\right)  -T_{A_{2}}\left(  x\right)  \right\vert \leq\delta_{0}\text{, then
}\left\Vert y\right\Vert \geq c_{0}\text{.}%
\]
We then distinguish two cases.

\textbf{Case 1.} $\left\vert T_{A_{1}}\left(  x\right)  -T_{A_{2}}\left(
x\right)  \right\vert \leq\delta_{0}.$ Then we proceed from (\ref{Lagrange}),
expressing Euclidean derivatives of $h_{G}$ in terms of the vector fields
$X_{i}$'s and their commutators, and exploiting Gaussian bounds for $h_{G}$
proved in \cite[Theorem 5.3]{BLU1}%
\begin{align}
&  \sup_{y\in\left[  T_{A_{1}}\left(  x\right)  ,T_{A_{2}}\left(  x\right)
\right]  }\left\vert \nabla_{y}h_{G}\left(  y,t\right)  \right\vert
\leq\label{grad bound}\\
&  \leq ct^{-Q/2-k}\cdot\sup_{y\in\left[  T_{A_{1}}\left(  x\right)
,T_{A_{2}}\left(  x\right)  \right]  }\exp\left(  -\frac{\left\Vert
y\right\Vert }{c_{\Lambda}t}\right)  \leq ct^{-Q/2-k}\exp\left(  -\frac
{1}{c_{\Lambda}t}\right)  .\nonumber
\end{align}
By (\ref{Lagrange}), (\ref{grad bound}) and (\ref{BU last}) we get
(\ref{claim}).

\textbf{Case 2.} $\left\vert T_{A_{1}}\left(  x\right)  -T_{A_{2}}\left(
x\right)  \right\vert >\delta_{0}.$ We then apply (\ref{Step 1 BLU}) (that is
the result already proved in \cite{BLU1}):%
\begin{align*}
&  \left\vert h_{G}\left(  T_{A_{1}}\left(  x\right)  ,t\right)  -h_{G}\left(
T_{A_{2}}\left(  x\right)  ,t\right)  \right\vert \leq c_{\Lambda}\left\Vert
A_{1}-A_{2}\right\Vert ^{1/s}t^{-Q/2}\exp\left(  -\frac{1}{ct}\right)  \leq\\
&  \leq\frac{c}{\delta_{0}}\left\vert T_{A_{1}}\left(  x\right)  -T_{A_{2}%
}\left(  x\right)  \right\vert \left\Vert A_{1}-A_{2}\right\Vert
^{1/s}t^{-Q/2}\exp\left(  -\frac{1}{ct}\right)  \leq
\end{align*}
by (\ref{BU last})%
\[
\leq c\left\Vert A_{1}-A_{2}\right\Vert ^{1+1/s}t^{-Q/2}\exp\left(  -\frac
{1}{ct}\right)  \leq c\left\Vert A_{1}-A_{2}\right\Vert t^{-Q/2}\exp\left(
-\frac{1}{ct}\right)
\]
since the matrix norm $\left\Vert A_{1}-A_{2}\right\Vert $ is always bounded
in $\mathcal{M}_{\Lambda}.$ Finally, the last expression can be bound by%
\[
c\left\Vert A_{1}-A_{2}\right\Vert t^{-Q/2-k}\exp\left(  -\frac{1}{ct}\right)
,
\]
possibly changing the exponent inside the $\exp$.

So the Claim is proved and the proof of the Proposition is complete.
\end{proof}

Starting from the previous Proposition one can now proceed following word by
word the arguments of Steps 2, 3 and 4 in \cite{BLU1}, concluding the proof of
Theorem \ref{Thm unif BF}.

\bigskip

\bigskip

Marco Bramanti

Dipartimento di Matematica

Politecnico di Milano

Via Bonardi 9

20133 Milano, ITALY

marco.bramanti@polimi.it

\bigskip

Maria Stella Fanciullo

Dipartimento di Matematica e Informatica

Universit\`{a} di Catania

Viale Andrea Doria 6

95125 Catania, ITALY

fanciullo@dmi.unict.it

\end{document}